\newtheorem{thm}{Theorem}[section]
\newtheorem{lem}[thm]{Lemma}
\newtheorem*{lem*}{Lemma}
\newtheorem{prop}[thm]{Proposition}
\newtheorem{cor}[thm]{Corollary}
\theoremstyle{definition}
\newtheorem{defn}[thm]{Definition}
\newtheorem{conj}[thm]{Conjecture}
\newtheorem{rem}[thm]{Remark}
\newcommand{\ams}[2]{  \noindent {\footnotesize
             {\small \em AMS {\rm 2000} subject classifications.
             {\rm Primary {\sc #1}; secondary {\sc #2}} } } }
\renewcommand\Pr[1]{\mathbb{P}\left(#1\right)}
\newcommand\Prmu[1]{\mathbb{P}_{\mu}\left(#1\right)}
\newcommand\Es[1]{\mathbb{E}\left[#1\right]}
\newcommand \Prmuln[1] {\mathbb{P}_\mu\left( #1 \, | \, \lt=n\right)}
\def \L {\mathbf{D}}
\def \P {\mathbb{P}}
\def \Pmu {\mathbb{P}_\mu}
\def \N {\mathbb N}
\def \D {\mathbb D}
\def \Db {\overline{\mathbb D}}
\def \R {\mathbb R}
\def \D {\mathbb D}
\def \T {\mathbf T}
\newcommand{\keywords}[1]{ \noindent {\footnotesize
             {\small \em Keywords and phrases.} {\sc #1} } }
\def \bf {\mathfrak{f}}
\def \d {\displaystyle}
\def \a {\alpha}
\def \b {\beta}
\def \z {\zeta}
\def \t {\mathcal{T}}
\def \| { \, | \,}
\def \zt {\zeta(\tau)}
\def \lt {\lambda(\tau)}
\date{}
\author{\large Nicolas Curien\footnote{\'Ecole Normale Sup\'erieure, nicolas.curien@ens.fr} \qquad Igor Kortchemski \footnote{Universit\'e Paris-Sud, igor.kortchemski@normalesup.org} }
\long\def\symbolfootnote[#1]#2{\begingroup%
\def\thefootnote{\fnsymbol{footnote}}\footnote[#1]{#2}\endgroup}
\begin{document}
\begin{center} { \huge Random non-crossing plane configurations: \\ A conditioned Galton-Watson tree approach}\end{center}

\medskip

\begin{center}
{\large Nicolas Curien\symbolfootnote[1]{\'Ecole Normale Sup\'erieure, nicolas.curien@ens.fr} \qquad Igor Kortchemski \symbolfootnote[2]{Universit\'e Paris-Sud, igor.kortchemski@normalesup.org} }\end{center}

\bigskip

\begin{abstract}
We study various models of random non-crossing configurations consisting of diagonals of convex polygons, and focus in particular on uniform
dissections and non-crossing trees. For both these models, we
prove convergence in distribution towards Aldous' Brownian
triangulation of the disk. In the case of dissections, we also
refine the study of the maximal vertex degree and validate a
conjecture of Bernasconi, Panagiotou and Steger. Our main tool is the use of an underlying Galton-Watson tree structure.
\end{abstract}
\keywords{Non-crossing plane configurations, Dissections, Conditioned Galton-Watson trees, Brownian triangulation, Probability on graphs}
\medskip 

\ams{60J80,60J05}{05C30,05C10,05C07}
\section*{Introduction}

Various models of non-crossing geometric configurations involving diagonals of a convex polygon in the plane have been studied both in
geometry, probability theory and especially in enumerative
combinatorics (see e.g. \cite{FN99}). Three specific models of
non-crossing configurations -- triangulations, dissections and
non-crossing trees -- have drawn particular attention. 
Let us first recall the definition of these models.

Let $P_{n}$ be the convex polygon inscribed in the unit disk of 
the complex plane whose vertices are the $n$-th roots of unity.
By definition,
a \emph{dissection} of $P_n$ is the union of the sides of $P_n$
 and of a collection of diagonals that may intersect only at
their endpoints. A \emph{triangulation} is a dissection 
whose inner faces are all triangles. Finally, a \emph{non-crossing
tree} of $P_n$ is a tree drawn on the plane whose vertices are all
vertices of $P_n$ and whose edges are non-crossing line
segments. See Fig.\,\ref{fig1} for examples.

\begin{figure}[!h]
 \begin{center}
 \includegraphics[height=3cm]{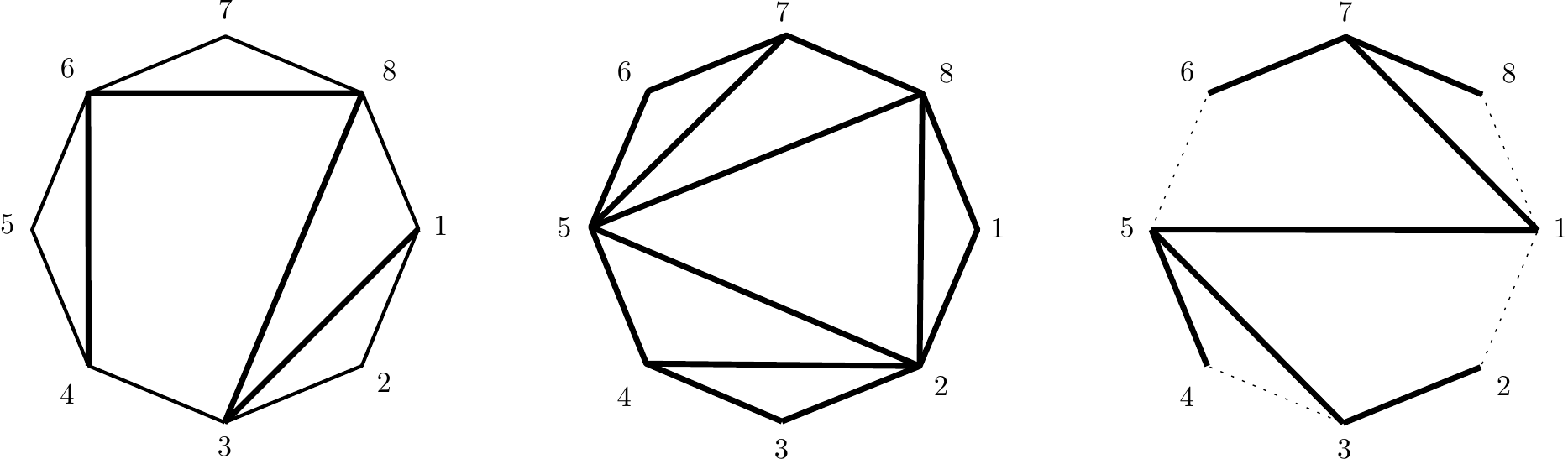}
 \caption{ \label{fig1} A dissection, a triangulation and a non-crossing tree of the octogon.}
 \end{center}
 \end{figure}


Graph theoretical properties of uniformly distributed triangulations have
been recently investigated in combinatorics. For instance, the study of the
asymptotic behavior of the maximal vertex degree 
has been initiated in \cite{DFHN99} and pursued in \cite{GW00}.
Afterwards, the same random variable has been studied in the case of
dissections \cite{BPS08}.

 We shall continue the study of
graph-theoretical properties of large uniform dissections and in
particular focus on the maximal vertex degree. Our method is based
on finding and exploiting an underlying Galton-Watson tree
structure. More precisely, we show that the \emph{dual tree}
associated with a uniformly distributed dissection of $P_n$ is a
critical Galton-Watson tree conditioned on having exactly $n-1$
\emph{leaves}. This new conditioning of Galton-Watson trees has been studied recently in \cite{K:leaves} (see also \cite{Rizzolo})
and is well adapted to the study of dissections, see
\cite{K:lam}.  In particular we are able to
validate a conjecture contained in \cite{BPS08} concerning the
asymptotic behavior of the maximal vertex degree in a uniform
dissection (Theorem \ref{thm:vertexdegrees}). Using the critical
Galton-Watson tree conditioned to survive introduced in
\cite{Kes86}, we also give a simple probabilistic explanation of the
fact that the inner degree of a given vertex in a large uniform
dissection converges in distribution to the sum of two independent
geometric variables (Proposition \ref{prop:somme2geom}). We finally
obtain new results about the asymptotic behavior of the maximal
face degree in a uniformly distributed dissection. 

As a by-product of
our techniques, we give a very simple probabilistic approach to the following enumeration problem. Let $\mathcal{A}$ be a non-empty subset of $\{3,4,5,\ldots\}$ and $\mathbf{D}^{(\mathcal{A})}_n$ the set of all dissections of $P_{n+1}$ whose face degrees all belong to the set $\mathcal{A}$. Theorem \ref{prop:gen} gives an explicit asymptotic formula for $\# \mathbf{D}_n^{( \mathcal{A})}$ as $n \rightarrow \infty$ (for those values of $n$ for which $ \mathbf{D}_{n}^{( \mathcal{A})} \ne  \varnothing$).  In particular when $\mathcal{A}_0=\{3,4,5,\ldots\}$,  then $ \mathbf{D}_{n-1}:=\mathbf{D}^{(\mathcal{A}_0)}_{n-1}$ is the set of all dissections of $P_n$ and
$$  \#\mathbf{D}_{n-1} \quad \mathop{\sim}_{n
\to \infty} \quad \frac{1}{4}\sqrt{\frac{99 \sqrt{2}-140}{\pi}}
n^{-3/2} (3+2\sqrt{2})^n.$$ This formula (Corollary \ref{prop:comptage}) was originally derived by Flajolet \& Noy \cite{FN99} using very different techniques.

\bigskip

From a geometrical perspective, Aldous \cite{Ald94a,Ald94b} proposed to consider
triangulations of $P_n$ as
closed subsets of the unit disk $ \overline{ \mathbb{D}} := \{ z \in \mathbb{C} : |z| \leq 1\}$ rather than viewing them as graphs. He proved that large uniform
triangulations of $P_n$ converge in distribution (for the Hausdorff
distance on compact subsets of the unit disk) towards a random
compact subset. This limiting object is called \emph{the Brownian
triangulation} (see Fig.\,\ref{brownian}). This
name comes from the fact that the Brownian triangulation can be constructed from the
Brownian excursion as follows: Let $\mathbbm{e} : [0,1] \to
\mathbb{R}$ be a normalized excursion of linear Brownian motion. For
every $s,t \in [0,1]$, we set $s \sim t$ if we have $ \mathbbm{e}(s)
= \mathbbm{e}(t) = \min_{[s\wedge t, s \vee t]} \mathbbm{e}$. The
Brownian triangulation is then defined as:
 \begin{eqnarray} \label{defbl}\mathcal{B} &:=& \bigcup_{s \sim t} \big[e^{-2\textrm{i} \pi s},e^{-2\textrm{i} \pi t}\big],\end{eqnarray}
 where $[x,y]$ stands for the Euclidean line segment joining two complex
 numbers $x$ and $y$.
 \begin{figure}[h!]
 \begin{center}
 \includegraphics[height=4cm]{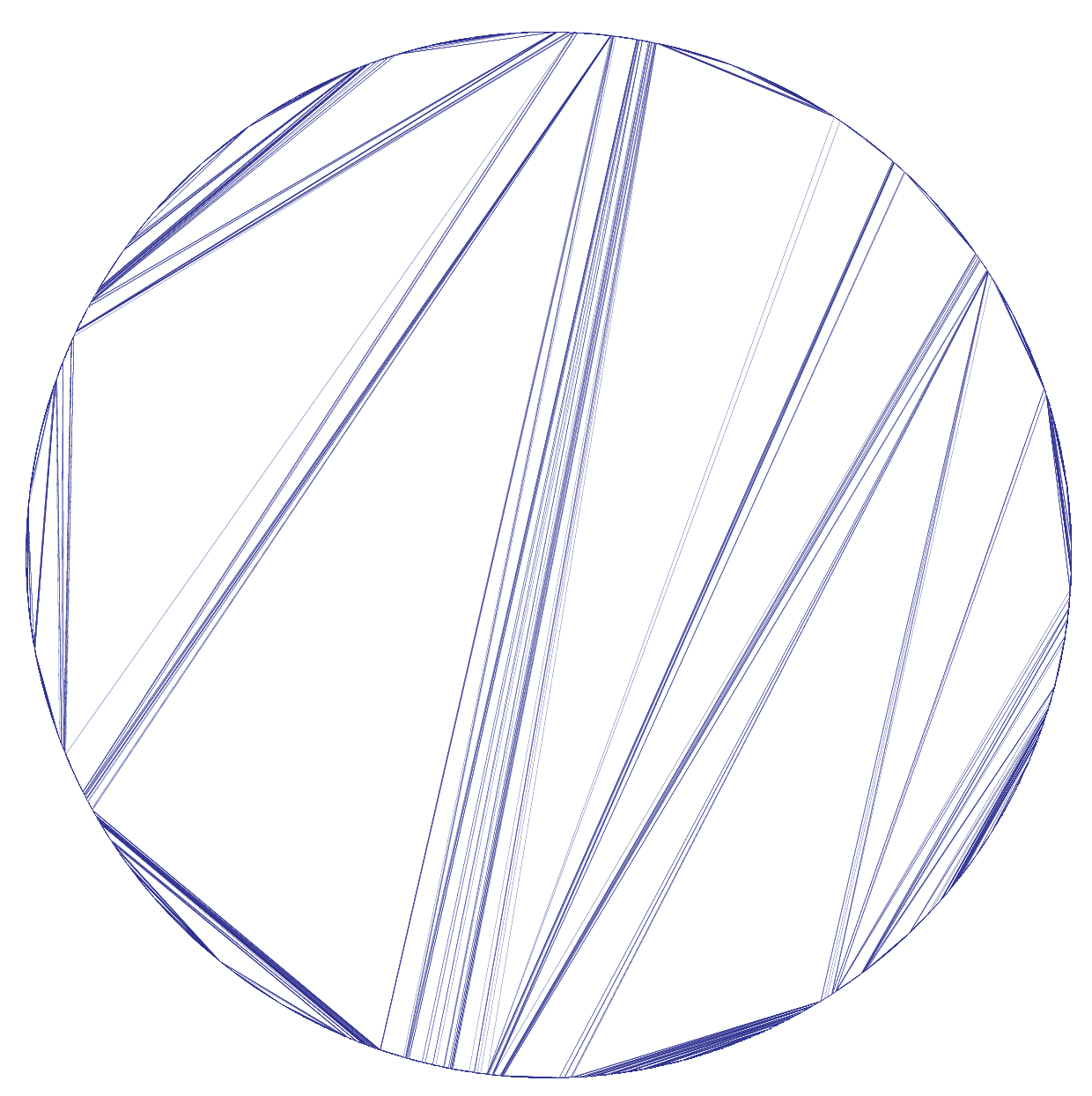}
 \caption{\label{brownian} A sample of the Brownian triangulation $ \mathcal{B}$.}
 \end{center}
 \end{figure}
It turns out that $\mathcal{B}$ is almost surely a closed set which is a
continuous triangulation of the unit disk (in the sense that the
complement of $ \mathcal{B}$ in $\overline{ \mathbb{D}}$ is a
disjoint union of open Euclidean triangles whose vertices belong to
the unit circle). Aldous also observed that the Hausdorff dimension of $ \mathcal {B}$ is almost surely equal to $3/2$ (see also \cite{LGP08}). Later, in the context of random maps, the Brownian
triangulation has been studied by Le Gall \& Paulin in \cite{LGP08}
where it serves as a tool in the proof of the homeomorphism theorem
for the Brownian map. See also \cite{CLG10,K:lam} for analogs of the Brownian triangulation.

However, neither large random 
{uniform}
dissections, nor large uniform non-crossing trees
have yet been studied from this geometrical point of view.
\begin{figure}[!h]  \begin{center}  \includegraphics[height=3cm]{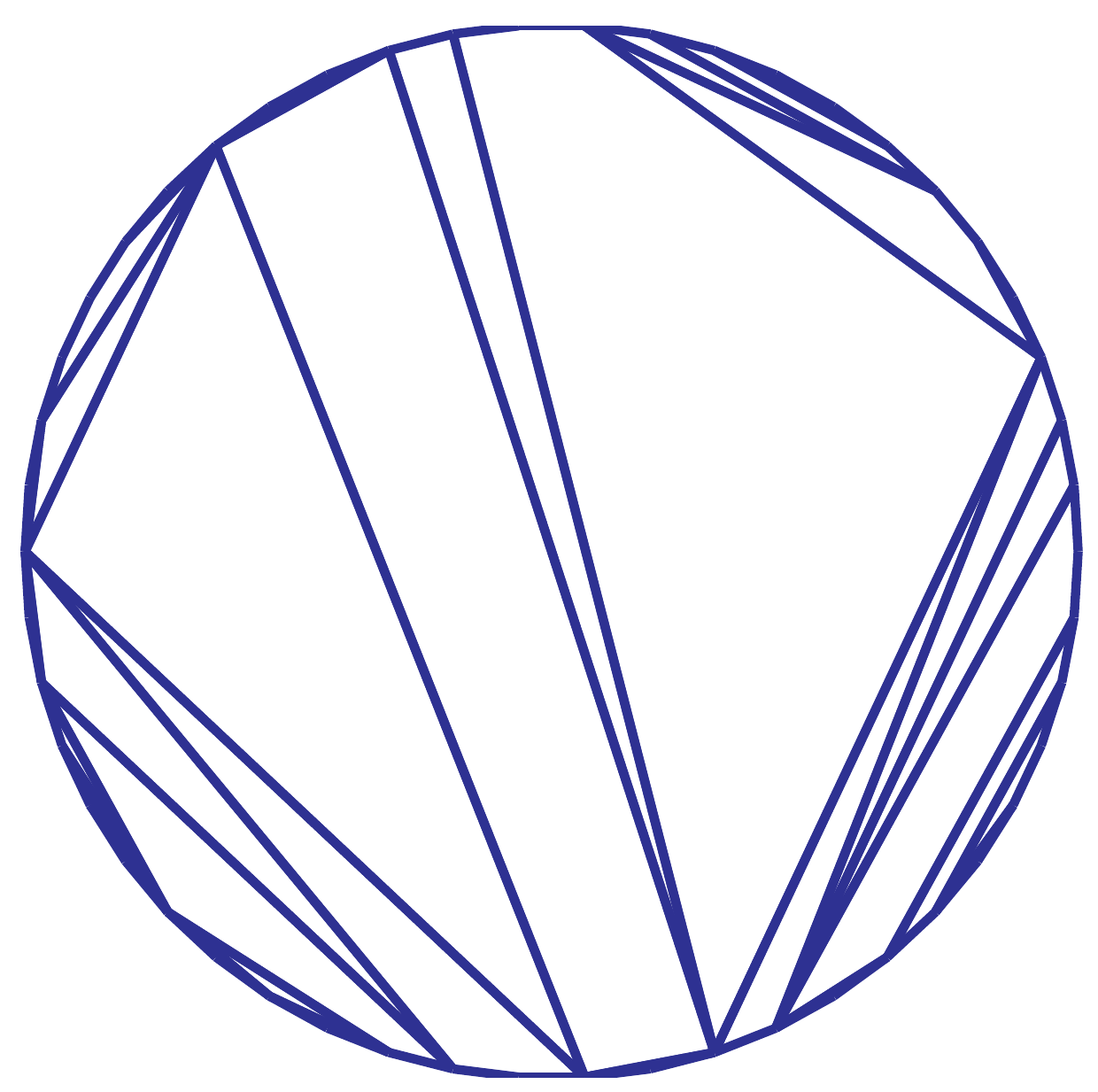} \hspace{0.5cm}
\includegraphics[height=3cm]{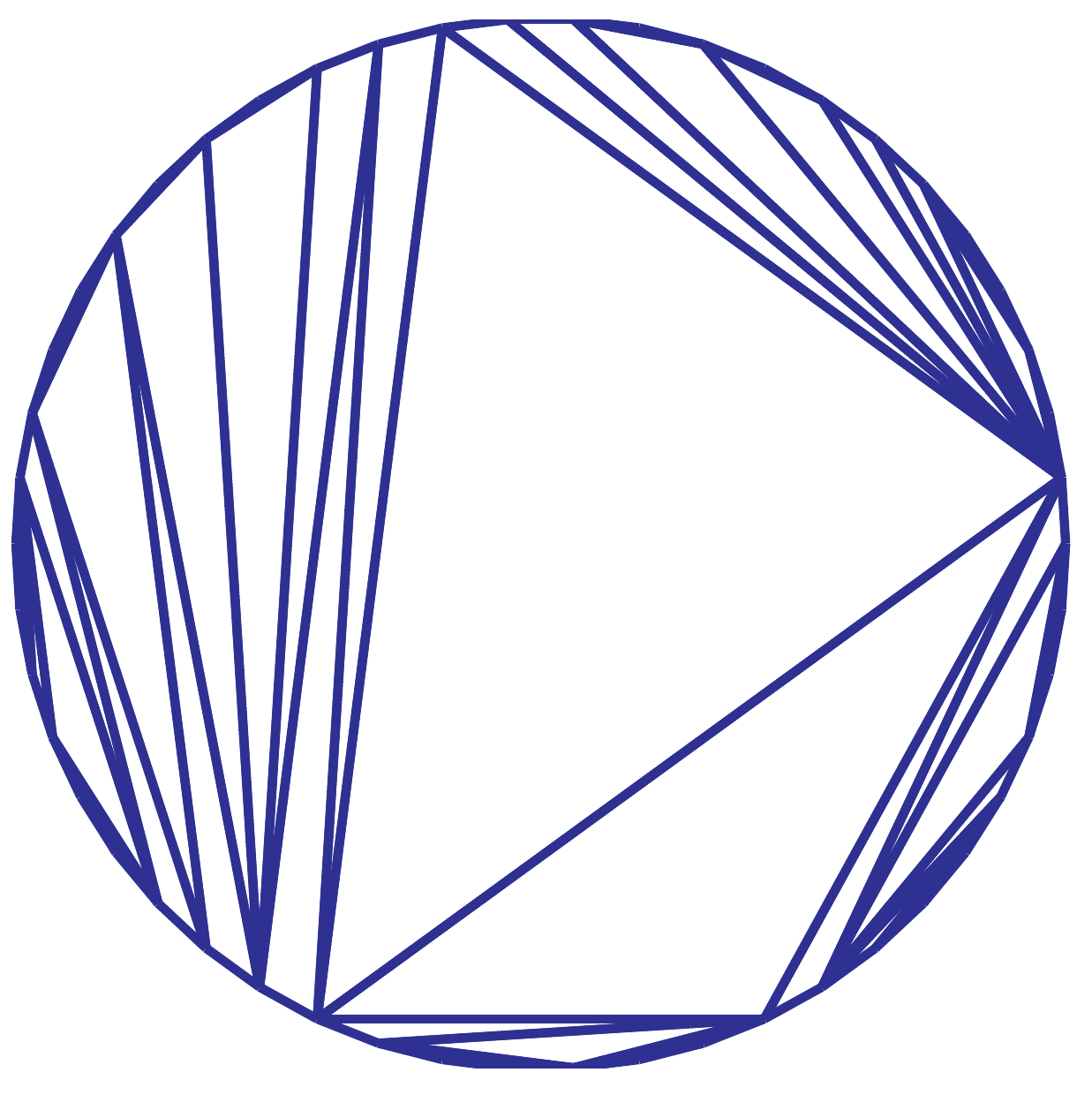}
\hspace{0.5cm}
\includegraphics[height=3cm]{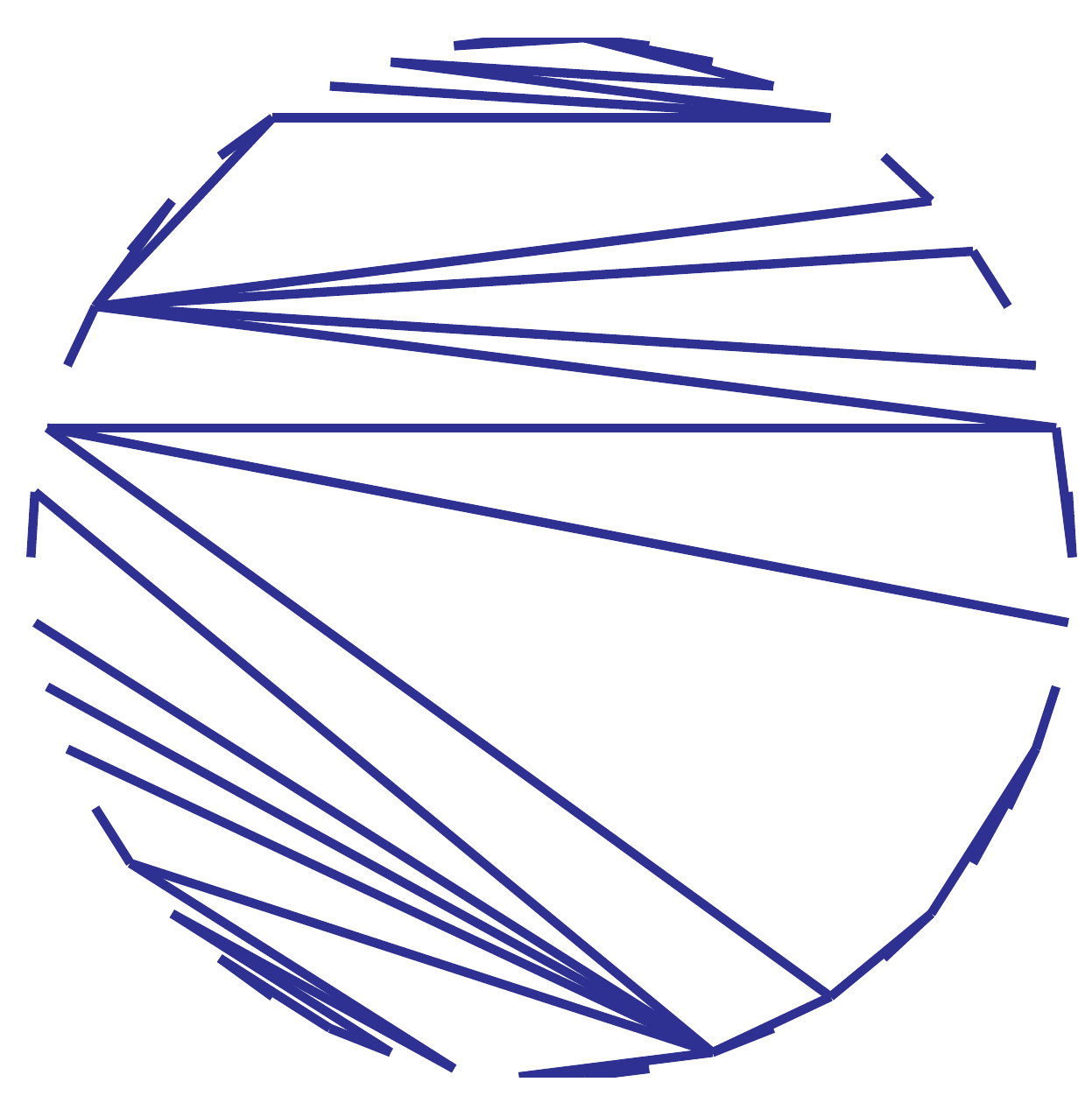}    \caption{
\label{fig2} Uniform dissection, triangulation and non-crossing tree
of size $50$. The same continuous model?}  \end{center}
\end{figure} 

\noindent In this work, we extend Aldous' theorem by showing that
both large uniform dissections and large uniform non-crossing trees
converge in distribution towards the Brownian triangulation (Theorem
\ref{thm:brownianlimit}). The maybe surprising fact that large
uniform dissections (which may have non-triangular faces) converge
to a continuous triangulation stems from the fact that many
diagonals degenerate in the limit.
For both models, the key is to use a Galton-Watson tree structure, which was
already described above in the case of dissections. In the case of
non-crossing trees this structure has been identified by Marckert
\& Panholzer \cite{MP02} who established that the \emph{shape} of a
uniform non-crossing tree of $P_n$ is \emph{almost} a Galton-Watson
tree conditioned on having $n$ vertices (see Theorem
\ref{thm:MarckertPanholzer} below for a precise statement).

We also consider other random configurations of non-crossing diagonals of $P_n$ such as  non-crossing graphs, non-crossing partitions and non-crossing pair partitions, and prove the convergence towards the Brownian triangulation, again by using an appropriate underlying tree structure. We also show that a uniformly distributed dissection over  $\mathbf{D}^{(\mathcal{A})}_{n}$ converges towards $ \mathcal {B}$ as $n \rightarrow \infty$. The Brownian triangulation thus appears as a universal limit for random non-crossing configurations. This has interesting applications: For instance, let  $ \chi_n$ be a random non-crossing configuration on the vertices of $P_n$  that converges in distribution towards $ \mathcal{B}$ in the sense of the Hausdorff metric. 
Then the Euclidian length of the longest diagonal of $ \chi_n$ converges in distribution towards the length of the longest chord of the Brownian triangulation with law $$\frac{1}{\pi} \frac{3x-1}{x^2(1-x)^2 \sqrt{1-2x}} \mathbf{1}_{\frac{1}{3} \leq x \leq \frac{1}{2}} \mathrm{d}x.$$
This has been shown in the particular case of triangulations in \cite{Ald94b}  (see also \cite{DFHN99}). 

\medskip

The paper is organized as follows. In the first section we introduce
the discrete models and explain the underlying Galton-Watson tree
structures. The second section is devoted to the convergence of
different random non-crossing configurations towards the
Brownian triangulation and to applications. The final section
contains the analysis of some graph-theoretical properties of large
uniform dissections, such as the maximal vertex or face degree.
\bigskip

\noindent \textbf{Acknowledgments.} We are indebted to Jean-Fran\c
cois Le Gall for a very careful reading of a first version of this
article and for useful suggestions. We thank the participants of the
ANR A3 meeting in Nancy (March 2011) for stimulating discussions and
especially Jean-Fran\c cois Marckert for telling us about
non-crossing trees.

\section{Dissections, non-crossing trees and  Galton-Watson trees}

\subsection{Dissections and plane trees}

Throughout this work, for every integer $n\geq3$, $P_{n}$ stands for
the regular polygon of the plane with $n$ sides whose vertices are
the $n$-th roots of unity.

\begin{defn}A \emph{dissection} $\mathcal{D}$ of the polygon $P_{n}$ is  the union of the sides of the polygon and of a collection of diagonals that may intersect only at
their endpoints. A \emph{face} $f$ of $ \mathcal{D}$ is a connected component of the complement of $ \mathcal{D}$
inside $P_n$; its degree, denoted by $\deg(f)$, is the number of sides
surrounding $f$. See Fig. \ref{fig:dual} for an example.\end{defn}

 Let $\L_n$ be the set of all dissections of $P_{n+1}$. Given a dissection $ \mathcal{D} \in \L_n$, we construct a (rooted ordered) tree $\phi( \mathcal{D})$ as follows: Consider the dual graph
of $ \mathcal{D}$, obtained by placing a vertex inside each face of
$ \mathcal{D}$ and outside each side of the polygon $P_{n+1}$ and by
joining two vertices if the corresponding faces share a common edge,
thus giving a connected graph without cycles. Then remove the dual
edge intersecting the side of $P_{n+1}$ which connects $1$ to
$e^{\frac{2 \textrm{i} \pi}{n+1}}$. Finally, root the tree at the corner
adjacent to the latter side (see Fig. \ref{fig:dual}).

\begin{figure*}[!h]
\begin{center}
\includegraphics[scale=0.6]{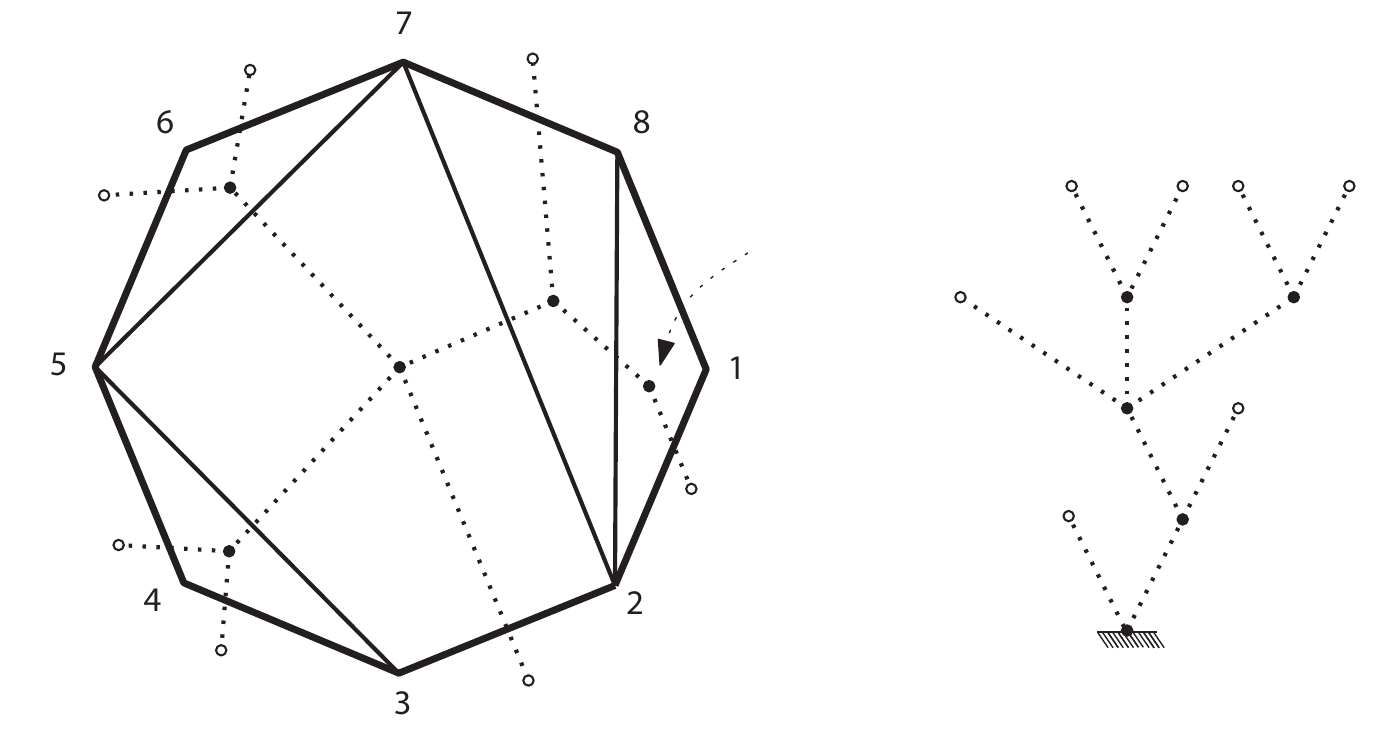}
\caption{\label{fig:dual} The dual tree of a dissection of $P_{8}$, note that the tree has $7$ leaves.}
\end{center}
\end{figure*}

The dual tree of a dissection is a plane  tree (also known as
rooted ordered tree in the literature). We briefly recall the
formalism of plane trees which can be found in \cite{LG06} for
example. Let $\N=\{0,1,\ldots\}$ be the set of nonnegative integers,
$\N^*=\{1,\ldots\}$ and let $ \mathcal{U}$ be the set of labels
$$ \mathcal{U}=\bigcup_{n=0}^{\infty} (\N^*)^n,$$
where by convention $(\N^*)^0=\{\varnothing\}$. An element of $ \mathcal{U}$ is
a sequence $u=u_1 \cdots u_m$ of positive integers, and we set
$|u|=m$, which represents the \og generation \fg \  of $u$. If $u=u_1
\cdots u_m$ and $v=v_1 \cdots v_n$ belong to $ \mathcal{U}$, we write $uv=u_1
\cdots u_m v_1 \cdots v_m$ for the concatenation of $u$ and $v$. 
Finally, a \emph{plane tree} $\tau$ is a finite or infinite subset of $
\mathcal{U}$ such that:
\begin{itemize}
\item[1.] $\varnothing \in \tau$,
\item[2.] if $v \in \tau$ and $v=uj$ for some $j \in \N^*$, then $u
\in \tau$,
\item[3.] for every $u \in \tau$, there exists an integer $k_u(\tau)
\geq 0$ (the number of children of $u$) such that, for every $j \in \N^*$, $uj \in \tau$ if and only
if $1 \leq j \leq k_u(\tau)$.
\end{itemize}
In the following, by \emph{tree} we will always mean plane tree. We denote the set of all trees by $\T$. 
We will often view each vertex of a tree $\tau$ as an individual of
a population whose $\tau$ is the genealogical tree. If $\tau$ is a tree and $u \in \tau$, we define the shift of $\tau$ at $u$ by $\sigma_u \tau=\{v \in U : \, uv \in \tau\}$, which is itself a tree. 
 If $u, v \in \tau$ we denote by $\llbracket u,v \rrbracket$ the discrete geodesic path between $u$ and $b$ in $\tau$.
The total
progeny of $\tau$, which is the total number of vertices of $\tau$,
will be
denoted by $\zeta(\tau)$. The number of leaves (vertices $u$ of $ \tau$ such that $k_u( \tau)=0$) of the tree $\tau$ is denoted by $\lambda( \tau)$.  Finally, we let $ \T^{(\ell)}_n$ denote the set of all plane trees with $n$ leaves such that there is no vertex with exactly one child.

\medskip

The following proposition is an easy combinatorial property, whose
proof is omitted.

\begin{prop}\label{prop:bijection} The duality application $\phi$ is a bijection between $ \mathbf{D}_{n}$ and $ \T^{(\ell)}_n$. \end{prop}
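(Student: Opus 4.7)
The plan is to verify that $\phi$ takes values in $\T^{(\ell)}_n$ and then to exhibit an explicit inverse map. The argument is combinatorial and hinges on Euler's formula together with the planarity of the dual embedding.

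First, fix a dissection $\mathcal{D} \in \mathbf{D}_n$ with $d$ diagonals and $f$ internal faces. Euler's formula applied to the planar graph $\mathcal{D}$ gives $f = d+1$. The auxiliary graph $G$, obtained by placing one vertex in each internal face of $\mathcal{D}$ and one vertex outside each of the $n+1$ sides of $P_{n+1}$, with edges dual to the sides and diagonals of $\mathcal{D}$, has $(n+1)+f$ vertices and $(n+1)+d = (n+1)+f-1$ edges, and is connected, hence is a tree. Removing the edge dual to the marked side $[1, e^{2\mathrm{i}\pi/(n+1)}]$ together with its external endpoint leaves a tree with $n$ leaves (the remaining external vertices) and $f$ internal vertices (the face vertices). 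The planar embedding of $\mathcal{D}$ induces a planar structure on this tree, and the prescribed choice of root makes it a rooted plane tree. Since every face of a dissection has degree at least three, each non-root face vertex of degree $\delta$ contributes $\delta-1 \geq 2$ children, and the root face vertex has $\deg(f_0)-1 \geq 2$ children as well (the removed external vertex accounting for the missing neighbor). Thus $\phi(\mathcal{D}) \in \T^{(\ell)}_n$.

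Next, I would define $\psi \colon \T^{(\ell)}_n \to \mathbf{D}_n$ as follows. Given $\tau \in \T^{(\ell)}_n$, list its leaves in depth-first contour order and identify the $i$-th leaf with the $i$-th side of $P_{n+1}$ encountered counterclockwise from the marked side. Every internal vertex $v$ of $\tau$ then determines a contiguous arc of $P_{n+1}$, namely the polygon sides indexed by the leaves of the subtree $\sigma_v \tau$; for each non-root internal $v$, draw the chord of $P_{n+1}$ joining the two extreme vertices of this arc. The collection of these chords is a family of non-crossing diagonals of $P_{n+1}$ whose union with the polygon sides yields a dissection $\psi(\tau) \in \mathbf{D}_n$. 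By construction, the face of $\psi(\tau)$ attached to an internal vertex $v$ has exactly $k_v(\tau)+1 \geq 3$ sides (one per child plus one coming either from the parent edge or, for the root, from the marked side), so the output lies in $\mathbf{D}_n$.

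Finally, one checks directly that $\phi \circ \psi = \mathrm{id}_{\T^{(\ell)}_n}$ and $\psi \circ \phi = \mathrm{id}_{\mathbf{D}_n}$ since both maps encode the same combinatorial datum, namely the nested decomposition of the sides of $P_{n+1}$ (with the marked side removed) into the contiguous arcs indexed by subtrees of $\tau$. The only genuine obstacle in writing out the proof is bookkeeping: one has to make sure the depth-first contour order on the leaves of $\tau$ matches the cyclic order on the sides of $P_{n+1}$ and that the root conventions on both sides are consistent. Once these planar-embedding conventions are fixed, no substantive content remains, which is presumably why the authors omit the proof.
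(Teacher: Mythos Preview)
The paper omits the proof of this proposition entirely, calling it ``an easy combinatorial property, whose proof is omitted.'' Your argument is a correct and standard way to supply the missing details: the Euler-formula count confirms that the dual construction yields a tree with exactly $n$ leaves and no vertex of out-degree one, and your inverse map $\psi$ via the contour order of the leaves is the natural reconstruction. Your closing remark anticipates the paper exactly---the authors regard this as routine bookkeeping and omit it.
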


Finally, we briefly recall the standard definition of Galton-Watson
trees.

\begin{defn}Let $\rho$ be a probability measure on $\N$ such that $\rho(1)<1$.  
The law of the
Galton-Watson tree with offspring distribution $\rho$ is the unique
probability measure $\P_\rho$ on $\T$ such that:
\begin{itemize}
\item[1.] $\P_\rho(k_\varnothing=j)=\rho(j)$ for $j \geq 0$,
\item[2.] for every $j \geq 1$ with $\rho(j)>0$, conditionally on $\{ k_{\varnothing}=j\}$, the subtrees
$\sigma_1 \tau, \ldots, \sigma_j \tau$ are i.i.d.\,with distribution $\P_\rho$.
\end{itemize}
It is well known that if $\rho$ has mean less than or equal to $1$ then a $\rho$-Galton-Watson tree is almost surely finite.  In the sequel, for every integer $j \geq 1$, $ \mathbb{P}_{\rho,j}$
will stand for the probability measure on $\T^j$ which is the
distribution of $j$ independent trees of law $ \mathbb{P}_{\rho}$.
The canonical element of $\T^j$ will be denoted by $ \mathfrak{f}$.
For $\bf=(\tau_1,\ldots,\tau_j) \in \T^j$, let $\lambda(\bf) =
\lambda(\tau_1)+\cdots+\lambda(\tau_j)$ be the total number of
leaves of $\bf$.
\end{defn}


\subsection{Uniform dissections are conditioned Galton-Watson trees}

In the rest of this work, $ \mathcal{D}_{n}$ is a random dissection uniformly distributed over $ \mathbf{D}_{n}$. We also set $\t_{n} = \phi( \mathcal{D}_{n})$ to simplify notation. Remark that $\t_{n}$ is a random tree which belongs to $ \T^{(\ell)}_n$.

Fix $c \in (0,1/2)$ and define a probability distribution $\mu^{(c)}$ on $\N$ as
follows:
$$\mu^{(c)}_0 = \frac{1-2c}{1-c}, \qquad \mu^{(c)}_1 = 0, \qquad \mu^{(c)}_{i} = c^{i-1} \mbox{ for } i \geq 2.$$
It is straightforward to check that $\mu^{(c)}$ is a probability
measure, which moreover has mean equal to $1$ when $c
=1-2^{-1/2}$. In the latter case, we drop the exponent ${(c)}$
in the notation, so that $ \mu:= \mu^ {(1- 1/\sqrt {2})}$. The following theorem gives a
connection between uniform dissections of $P_n$ and Galton-Watson
trees conditioned on their number of leaves. This connection has been obtained independently of the present work in \cite{PitmanRizzolo}.


\begin{prop}\label{prop:unif}The conditional probability distribution $ \mathbb{P}_{\mu^{(c)}}( \, \cdot \mid \lambda(\tau) =
n)$ does not depend on the choice of $c \in (0,1/2)$ and coincides with the distribution of the dual tree $\t_{n}$ of a uniformly distributed dissection of $P_{n+1}$.\end{prop}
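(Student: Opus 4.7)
The plan is to compute $\P_{\mu^{(c)}}(\tau)$ explicitly for an arbitrary $\tau \in \T^{(\ell)}_n$ and observe that the result depends only on $n$, so that the conditional measure is uniform on $\T^{(\ell)}_n$. Once this is established, the bijection $\phi$ from Proposition \ref{prop:bijection} immediately identifies it with the law of $\t_n$.

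First I would write the Galton--Watson weight of a fixed tree $\tau \in \T^{(\ell)}_n$, namely
\[
\P_{\mu^{(c)}}(\tau) \;=\; \prod_{u \in \tau} \mu^{(c)}_{k_u(\tau)}.
\]
Because $\mu^{(c)}_1 = 0$, no tree charged by $\P_{\mu^{(c)}}$ has a vertex with exactly one child, so the above product is nonzero precisely on trees in $\bigcup_n \T^{(\ell)}_n$. For $\tau \in \T^{(\ell)}_n$, let $m$ be the number of internal (non-leaf) vertices; the leaves contribute a factor $(\mu^{(c)}_0)^n = \bigl(\tfrac{1-2c}{1-c}\bigr)^n$, and each internal vertex contributes $c^{k_u(\tau)-1}$.

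Next I would evaluate $\sum_{u\text{ internal}}(k_u(\tau)-1)$ using the elementary identity that the sum of the $k_u$ over \emph{all} vertices equals the total number of vertices minus $1$. Since the total number of vertices is $n+m$, this sum equals $n+m-1$; subtracting the $m$ internal vertices gives $n-1$. Hence
\[
\P_{\mu^{(c)}}(\tau) \;=\; \left(\frac{1-2c}{1-c}\right)^{\!n} c^{\,n-1},
\]
a quantity which depends on $\tau$ only through $n = \lambda(\tau)$. Consequently $\P_{\mu^{(c)}}(\cdot \mid \lambda(\tau)=n)$ is the uniform distribution on $\T^{(\ell)}_n$, and in particular does not depend on the choice of $c \in (0,1/2)$.

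Finally, since $\mathcal{D}_n$ is uniform over $\mathbf{D}_n$ and Proposition \ref{prop:bijection} asserts that $\phi : \mathbf{D}_n \to \T^{(\ell)}_n$ is a bijection, the push-forward $\t_n = \phi(\mathcal{D}_n)$ is uniform on $\T^{(\ell)}_n$. Both distributions therefore coincide. There is no real obstacle here; the only point that requires a small care is the bookkeeping of the vertex counts showing that the product factorises into a function of $n$ alone, which is exactly what encodes the ``number of diagonals = $n - 1 - (\text{number of interior faces})$'' relation in the tree side.
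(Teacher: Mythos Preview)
Your proof is correct and follows essentially the same route as the paper: compute the Galton--Watson weight of an arbitrary tree in $\T^{(\ell)}_n$, use the identity $\sum_u k_u(\tau)=\zeta(\tau)-1$ to see that this weight depends only on $\lambda(\tau)=n$, and conclude via the bijection $\phi$. The bookkeeping and the final formula $\bigl(\tfrac{1-2c}{1-c}\bigr)^n c^{\,n-1}$ match the paper's computation exactly.
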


\begin{proof}We adapt the proof of \cite[Proposition 1.8]{K:lam} in our context. 
By Proposition \ref{prop:bijection}, it is sufficient to show that
for every $c \in (0,1/2)$ the probability distribution  $
\mathbb{P}_{\mu^{(c)}}( \cdot \mid \lambda(\tau) = n)$ is the
uniform probability distribution over $\T^{(\ell)}_n$. If $ \tau$ is a tree, we denote by $u_0,\ldots,u_{\zeta( \tau)-1}$ the vertices
of $ \tau$ listed in lexicographical order and recall that $k_{u_i}$
stands for the number of children of $u_i$. Let $ \tau_0 \in \T^{(\ell)}_n$. By the definition of $ \mathbb{P}_{\mu^{(c)}}$,
we have
 \begin{eqnarray*} \mathbb{P}_{\mu^{(c)}}( \tau=\tau_0 \mid \lambda(\tau) = n)&=&\frac{1}{\mathbb{P}_{\mu^{(c)}}(\lt=n)}  \prod_{i=0}^{\zeta(\tau_0)-1}
\mu^{(c)}_ {k_{u_i}}. \end{eqnarray*} Using the definition of $\mu^{(c)}$, the
product appearing in the last expression can be written as
$$ \prod_{i=0}^{\zeta(\tau_0)-1}
\mu^{(c)}_ {k_{u_i}}= \left( \frac{1-2c}{1-c}\right)^{  \lambda( \tau_0)} c^{  \z(\tau_0)- 1- (\zeta(\tau_0)-\lambda(\tau_0))}
=c^{-1}\left(\frac{c(1-2c)}{1-c}\right)^{  \lambda(\tau_0)}.$$Thus
$\mathbb{P}_{\mu^{(c)}}( \tau=\tau_0 \mid \lambda(\tau) = n)$ depends
only on $\lambda(\tau_0)$. We conclude that $\mathbb{P}_{\mu^{(c)}}( \,
\cdot \mid \lambda(\tau) = n)$ is the uniform distribution over $ \T^{(\ell)}_n$.\end{proof}

 In the following,
 we will always choose $c = 1-2^{-1/2}$ for $\mu^{(c)}=\mu$ to be critical. Hence, the random tree $\t_{n}$ has law $\mathbb{P}_{\mu}( \cdot \mid
\lambda(\tau) = n)$.   A general study of Galton-Watson trees conditioned by their number of leaves is made in \cite{K:leaves}. In particular, we will make an
extensive use of the following asymptotic estimate which is a particular case of  \cite[Theorem 3.1]{K:leaves}:
 \begin{lem} \label{estimee} We have  \begin{eqnarray}\label{eq:estimee}\Prmu{\lt = n}
&\d \mathop{\sim}_{n \to \infty} &
\frac{n^{-3/2}}{2\sqrt{\pi\sqrt{2}}}.
 \end{eqnarray}\end{lem}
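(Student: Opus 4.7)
The plan is to apply singularity analysis to the generating function
$$F(z) \;:=\; \sum_{n \geq 1} \Prmu{\lt = n}\,z^n.$$
Decomposing a $\mu$-Galton-Watson tree at its root and using that $\mu_1 = 0$, I would first establish the functional equation
$$F(z) \;=\; \mu_0\, z \,+\, \sum_{k \geq 2} \mu_k\, F(z)^k \;=\; f\bigl(F(z)\bigr) + \mu_0(z-1),$$
where $f(x) = \mu_0 + c x^{2}/(1-cx)$ is the probability generating function of $\mu$, with $\mu_0 = 2-\sqrt{2}$ and $c = 1-2^{-1/2}$. Since $\mu$ is critical and $\mu$-trees are a.s.\ finite, $F(1) = 1$.

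Setting $G(z,w) := w - f(w) - \mu_0(z-1)$, the identity $G(z, F(z)) = 0$ has a singular point at $(1,1)$, because $\partial_w G(1,1) = 1 - f'(1) = 0$ by criticality. Expanding to second order at $(1,1)$ and using the explicit computation
$$f''(1) \;=\; \frac{2c}{(1-c)^{3}} \;=\; 4(\sqrt{2}-1),$$
the equation becomes $\mu_0(z-1) + \tfrac{1}{2} f''(1)(F(z)-1)^{2} + o\bigl((F(z)-1)^{2}\bigr) = 0$, so choosing the branch with $F(z) \to 1$ as $z \to 1^{-}$,
$$F(z) \;=\; 1 \,-\, \sqrt{\tfrac{2\mu_0}{f''(1)}}\,\sqrt{1-z} \,+\, o\bigl(\sqrt{1-z}\bigr) \;=\; 1 \,-\, 2^{-1/4}\sqrt{1-z} \,+\, o\bigl(\sqrt{1-z}\bigr),$$
where I used $2\mu_0/f''(1) = (2-\sqrt{2})/\bigl(2(\sqrt{2}-1)\bigr) = 1/\sqrt{2}$.

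To convert this singular expansion into a coefficient asymptotic I would invoke the Flajolet--Odlyzko transfer theorem, which gives
$$[z^n] F(z) \;\sim\; \frac{2^{-1/4}}{2\sqrt{\pi}}\, n^{-3/2} \;=\; \frac{n^{-3/2}}{2\sqrt{\pi\sqrt{2}}}.$$
The main technical obstacle is verifying the hypothesis of that theorem, namely that $F$ admits an analytic continuation to a $\Delta$-domain with apex at $z = 1$ and has no other singularities on the unit circle. The analytic implicit function theorem applied to $G$ at points $(z_0, F(z_0))$ with $|z_0| \leq 1$, $z_0 \neq 1$, yields the local extension, and the absence of other singularities on $|z|=1$ follows from aperiodicity: since $\mu_0>0$ and $\mu_2>0$, the support of the law of $\lt$ generates $\mathbb{Z}$. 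Everything else reduces to the explicit computations with $\mu$ carried out above.
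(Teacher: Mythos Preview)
Your argument is correct: the functional equation, the second-order expansion at the singular point $(1,1)$, and the numerical computations all check out, and the transfer theorem then gives exactly the stated asymptotic. The sketch of the $\Delta$-domain verification is also sound: the only root of $f'(w)=1$ with $|w|\le 1$ is $w=1$ (the other root is $3+2\sqrt{2}$), and aperiodicity of $\{\lambda(\tau)=n\}$ (both $\Prmu{\lt=1}=\mu_0$ and $\Prmu{\lt=2}=\mu_2\mu_0^2$ are positive) forces $|F(z_0)|<1$ for $|z_0|=1$, $z_0\neq 1$, so the implicit function theorem extends $F$ past every boundary point except $z=1$.

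However, your route is genuinely different from the paper's. The paper does not prove this lemma at all: it simply quotes it as a special case of a general local limit theorem for the number of leaves of a critical Galton--Watson tree (Theorem 3.1 of \cite{K:leaves}), proved there by probabilistic means (random-walk representations and local limit theorems). Your approach is the analytic-combinatorics one in the spirit of Flajolet and Noy --- indeed, the paper explicitly contrasts its ``probabilistic'' derivation of Corollary~\ref{prop:comptage} with the Flajolet--Noy generating-function method. What you gain is a fully self-contained argument that never leaves the specific distribution $\mu$; what the cited result buys is far greater generality (any critical finite-variance, or even stable-domain, offspring law) at the price of importing heavier probabilistic machinery.
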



Let us  give an application of Proposition \ref{prop:unif} and Lemma \ref{estimee} to the enumeration of dissections. There exists no easy closed formula for the
number $\# \mathbf{D}_{n}$ of dissections of $P_{n+1}$. However, a recursive
decomposition easily shows that the generating function
 \begin{eqnarray*} D(z) &:=& \sum_{n \geq 3} z^n \# \mathbf{D}_{n-1}, \end{eqnarray*}
is equal to $\frac{z}{4} (1+z - \sqrt{z^2-6z+1})$, see e.g.
\cite[Section 3]{BPS08} and \cite{FN99}. Using classical techniques
of analytic combinatorics \cite{FS09}, it is then possible to get the
asymptotic behavior of $ \# \mathbf{D}_{n}$, see \cite{FN99}.
Here, we present a very short ``probabilistic'' proof of this result.

\begin{cor}[Flajolet \& Noy, \cite{FN99}]\label{prop:comptage}We have
 $$\#\L_{n-1}  \quad \mathop{\sim}_{n \to \infty} \quad \frac{1}{4}\sqrt{\frac{99 \sqrt{2}-140}{\pi}} n^{-3/2} (3+2\sqrt{2})^n.$$
\end{cor}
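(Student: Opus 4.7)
The plan is to invert the relation established in the proof of Proposition \ref{prop:unif}, using the probabilistic asymptotics of Lemma \ref{estimee} to extract the enumeration. The bijection $\phi$ of Proposition \ref{prop:bijection} gives $\#\mathbf{D}_{n-1} = \#\T^{(\ell)}_{n-1}$, so it suffices to count plane trees with $n-1$ leaves and no vertex of outdegree one.

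The key computation already appears inside the proof of Proposition \ref{prop:unif}: for every tree $\tau_0 \in \T^{(\ell)}_{n-1}$,
$$\mathbb{P}_{\mu^{(c)}}(\tau = \tau_0) \;=\; c^{-1}\left(\frac{c(1-2c)}{1-c}\right)^{n-1}.$$
Summing this constant expression over all $\tau_0 \in \T^{(\ell)}_{n-1}$ gives
$$\mathbb{P}_{\mu^{(c)}}(\lambda(\tau) = n-1) \;=\; \#\T^{(\ell)}_{n-1} \cdot c^{-1}\left(\frac{c(1-2c)}{1-c}\right)^{n-1},$$
so that
$$\#\mathbf{D}_{n-1} \;=\; c\,\left(\frac{1-c}{c(1-2c)}\right)^{n-1} \mathbb{P}_{\mu^{(c)}}(\lambda(\tau) = n-1).$$
Specializing now to the critical value $c = 1-2^{-1/2}$ so that $\mu^{(c)} = \mu$, I would apply Lemma \ref{estimee} to replace $\mathbb{P}_{\mu}(\lambda(\tau) = n-1)$ by its equivalent $(n-1)^{-3/2}/(2\sqrt{\pi\sqrt{2}}) \sim n^{-3/2}/(2\sqrt{\pi\sqrt{2}})$.

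It then remains to carry out the arithmetic to identify the exponential rate and the prefactor. For $c = 1 - 1/\sqrt{2}$, one has $1-c = 1/\sqrt{2}$ and $1-2c = \sqrt{2}-1$, hence
$$\frac{1-c}{c(1-2c)} \;=\; \frac{1/\sqrt{2}}{(1-1/\sqrt{2})(\sqrt{2}-1)} \;=\; \frac{\sqrt{2}}{3\sqrt{2}-4} \;=\; 3+2\sqrt{2},$$
after rationalizing. This gives the correct exponential base, and collecting the remaining factors yields the prefactor
$$\frac{c}{3+2\sqrt{2}}\cdot\frac{1}{2\sqrt{\pi\sqrt{2}}} \;=\; \frac{1-1/\sqrt{2}}{2(3+2\sqrt{2})\sqrt{\pi\sqrt{2}}}.$$
The main (and in fact only) real work is to verify that this constant simplifies to $\tfrac14\sqrt{(99\sqrt{2}-140)/\pi}$, which amounts to squaring and checking that
$$\frac{(1-1/\sqrt{2})^2}{4(3+2\sqrt{2})^2\sqrt{2}\,\pi} \;=\; \frac{99\sqrt{2}-140}{16\pi};$$
since $(3+2\sqrt{2})^2 = 17+12\sqrt{2}$ and $(1-1/\sqrt{2})^2 = \tfrac32 - \sqrt{2}$, expanding both sides reduces this to a routine identity in $\mathbb{Q}[\sqrt{2}]$. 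That algebraic manipulation is the only delicate point; the probabilistic content is entirely packaged in Proposition \ref{prop:unif} and Lemma \ref{estimee}.
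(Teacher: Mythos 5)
Your argument is correct and is essentially the same as the paper's: you invert the uniformity statement of Proposition \ref{prop:unif} to express $\#\mathbf{D}_{n-1}$ in terms of $\mathbb{P}_\mu(\lambda(\tau)=n-1)$ and then apply Lemma \ref{estimee}. The paper reaches the same identity by evaluating $\mathbb{P}_\mu(\tau=\tau_0\mid\lambda(\tau)=n-1)=1/\#\mathbf{D}_{n-1}$ on a single explicit tree (the root with $n-1$ children) rather than by summing the constant unconditional probability over all of $\T^{(\ell)}_{n-1}$, but this is only a cosmetic difference; the two computations are algebraically identical.
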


\begin{proof} 
Let $n \geq 3$ and let $ \tau_0=\{\varnothing,1,2,\ldots,n-1\}$ be the tree consisting of the root and its $n-1$ children.
By Proposition \ref{prop:unif}, we have
$$\frac{1}{\#\L_{n-1}}=\Pmu(\tau=\tau_0 \, | \, \lt=n-1)=\frac{\Pmu(\tau=\tau_0)}{\Pmu(\lt=n-1)}=
\frac{\mu_{n-1} \mu_0^{n-1}}{\Pmu(\lt=n-1)}.$$ Thus
 \begin{eqnarray} \#\L_{n-1}=\frac{\Pmu(\lt=n-1)}{(2-\sqrt{2})^{n-1}
\left(\frac{2-\sqrt{2}}{2}\right)^{n-2}}=\frac{(2-\sqrt{2})^3}{4}
\frac{\Pmu(\lt=n-1)}{(3-2 \sqrt{2})^n}. \label{equaun}\end{eqnarray}
The statement of the corollary now follows from  \eqref{eq:estimee} and \eqref{equaun}.\end{proof}

\subsection{Non-crossing trees are \emph{almost} conditioned Galton-Watson trees}

\begin{defn} A \emph{non-crossing tree} $ \mathscr{C}$ of $P_{n}$ is a tree
drawn in the plane whose vertices are all the vertices of $P_{n}$ and
whose edges are Euclidean line segments that do not intersect except possibly at
their endpoints.
\end{defn}

Every non-crossing tree $ \mathscr{C}$ inherits a plane
tree structure by rooting $\mathscr{C}$ at the vertex $1$ of $P_{n}$ and keeping the planar ordering
induced on $\mathscr{C}$. The children of the root vertex are ordered by going in clockwise order around the point $1$ of $P_n$, starting from the edge connecting $1$ to $ e^ {- 2 \textrm {i} \pi/n}$, which may or may not be in $ \mathcal {C}$. As in \cite{MP02}, we call this plane tree
\emph{the shape} of $\mathscr{C}$ and denote it by $ S ( \mathscr{C})$. Obviously $ \z( S(\mathscr{C}))=n$. Note that the mapping $ \mathscr{C} \mapsto S( \mathscr{C})$ is not one-to-one. 
However, we will later see that large scale properties of  uniform non-crossing trees are governed by their shapes. \medskip

In the following, we let $\mathscr{C}_{n}$ be uniformly distributed over the set of all non-crossing
trees of $P_{n}$. We also set $ \mathscr{T}_{n} = S(
\mathscr{C}_{n})$ to simplify notation. We start by
recalling a result of Marckert and Panholzer stating that $
\mathscr{T}_{n}$ is \emph{almost}  a Galton-Watson tree. Consider
the two offspring distributions:
$$ \begin{array}{rclc}
\nu_{\varnothing}(k) &=& 2 \cdot 3^{-k}, &\mbox{for }k=1,2,3, ...\\
\nu(k) &=& \displaystyle 4(k+1)3^{-k-2}, & \mbox{for }k=0,1,2,...
\end{array}$$
Following \cite{MP02}, we introduce a \emph{modified} version of the
$\nu$-Galton-Watson tree where the root vertex has a number of children
distributed according to $\nu_{ \varnothing}$ and all  other individuals have
offspring distribution $\nu$. We denote the resulting probability measure on plane trees
obtained by $ \widetilde{ \mathbb{P}}_{\nu}$. The following theorem is the main
result of \cite{MP02} and will be useful for our purposes:

\begin{thm}[Marckert \& Panholzer, \cite{MP02}] \label{thm:MarckertPanholzer} The random plane tree
$ \mathscr{T}_{n}$ is distributed according to
$\widetilde{\mathbb{P}}_{\nu}( \cdot \mid \zeta(\tau) = n)$.
\end{thm}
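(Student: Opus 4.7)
The plan is to verify the theorem by a two-stage comparison: first an explicit combinatorial enumeration of non-crossing trees of a given shape, then a direct computation showing that $\widetilde{\mathbb{P}}_\nu$ conditioned on the number of vertices puts the same weights on trees.

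The heart of the proof is the following enumeration. For a plane tree $\tau$ with $n = \zeta(\tau)$ vertices, let $f(\tau)$ denote the number of non-crossing trees $\mathscr{C}$ of $P_n$ with $S(\mathscr{C}) = \tau$. I would prove that
\[ f(\tau) \;=\; \prod_{v \neq \varnothing}\bigl(k_v(\tau)+1\bigr). \]
To establish this, introduce the auxiliary quantity $h(\tau)$ counting non-crossing embeddings of $\tau$ on a convex $n$-gon where the root may sit at \emph{any} vertex, with a fixed external reference direction playing the role of the (absent) parent edge. I would then argue by induction on $n$, decomposing at the root: if the root of $\tau$ has $k := k_\varnothing(\tau)$ children with subtrees $\tau_1,\ldots,\tau_k$ of sizes $n_1,\ldots,n_k$, then in any non-crossing realization the planar ordering forces the children to split into a "left block" (in shape order, $c_1,\ldots,c_s$) and a "right block" ($c_{s+1},\ldots,c_k$) along the polygon boundary, the value $s \in \{0,1,\ldots,k\}$ being the unique degree of freedom. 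For each of the $k+1$ admissible values of $s$ the position of the root is forced and each subtree $\tau_i$ can be independently embedded in $h(\tau_i)$ ways in its designated sub-arc, giving $h(\tau) = (k+1)\prod_i h(\tau_i)$; by induction $h(\tau) = \prod_{v \in \tau}(k_v+1)$. Returning to $f(\tau)$, since the root $\varnothing$ is now fixed at vertex $1$ of $P_n$, the subtree arcs of the $k_\varnothing$ root-children are determined by their sizes (with order fixed by the clockwise convention from the reference edge $(1,e^{-2\mathrm{i}\pi/n})$), so $f(\tau) = \prod_{i=1}^{k_\varnothing} h(\tau_i) = \prod_{v \neq \varnothing}(k_v+1)$.

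Once $f(\tau)$ is known, the distribution of $\mathscr{T}_n$ is $\mathbb{P}(\mathscr{T}_n = \tau) = f(\tau)/\#\{\text{non-crossing trees of }P_n\}$, which is proportional to $\prod_{v \neq \varnothing}(k_v+1)$. On the other hand, plugging in the definitions of $\nu_\varnothing$ and $\nu$ and using $\sum_{v\neq\varnothing} k_v = n-1-k_\varnothing$, one computes
\[ \widetilde{\mathbb{P}}_\nu(\tau = \tau_0) \;=\; 2\cdot 3^{-k_\varnothing}\,\prod_{v\neq\varnothing} 4(k_v+1)3^{-k_v-2} \;=\; 2\cdot 4^{n-1}\cdot 3^{-3(n-1)}\prod_{v\neq\varnothing}(k_v+1), \]
so that the powers of $3$ telescope and the dependence on $k_\varnothing$ cancels. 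Both probability measures are therefore proportional to $\prod_{v\neq\varnothing}(k_v+1)$ on trees with $n$ vertices, and conditioning $\widetilde{\mathbb{P}}_\nu$ on $\{\zeta(\tau) = n\}$ yields exactly the law of $\mathscr{T}_n$.

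The main obstacle is the inductive step establishing $h(\tau) = \prod_v(k_v+1)$: one has to fix a consistent planar convention (clockwise order around each vertex starting from the parent edge, and from the reference direction at the root) and then verify carefully that the non-crossing constraint forces precisely the "left-block / right-block" decomposition of children, yielding exactly $k+1$ admissible root positions. Everything else is a direct computation, and the strikingly clean form of $\widetilde{\mathbb{P}}_\nu$ chosen by Marckert and Panholzer is exactly what is needed to match the product $\prod_{v \neq \varnothing}(k_v+1)$ coming from the enumeration.
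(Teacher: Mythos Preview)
The paper does not prove this theorem: it is quoted as the main result of Marckert and Panholzer \cite{MP02} and used as a black box. Your proposal is therefore not to be compared against any argument in the present paper.

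That said, your proof is correct and is essentially the original argument of \cite{MP02}. The enumeration $f(\tau)=\prod_{v\neq\varnothing}(k_v+1)$ is the heart of their paper, and your inductive derivation via the auxiliary count $h(\tau)$ (embeddings with a free root position relative to an external reference direction) is a clean way to obtain it: at each non-root vertex $v$ sitting in a prescribed arc of the polygon, the non-crossing constraint forces the subtrees of the children of $v$ to occupy contiguous sub-arcs in their plane order, and the only freedom is the number $s\in\{0,\dots,k_v\}$ of children placed before $v$ along the arc, whence the factor $k_v+1$. The algebraic check that $\widetilde{\mathbb{P}}_\nu(\tau)=2\cdot 4^{n-1}3^{-3(n-1)}\prod_{v\neq\varnothing}(k_v+1)$ is exactly right, the point being that the exponent of $3$ telescopes via $\sum_{v\neq\varnothing}k_v=n-1-k_\varnothing$, so that the dependence on $k_\varnothing$ disappears and the conditional law on $\{\zeta(\tau)=n\}$ is proportional to $f(\tau)$.

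The only place where care is needed, as you yourself flag, is fixing the planar convention consistently (clockwise order of children starting from the parent edge, and from the side $[1,e^{-2\mathrm{i}\pi/n}]$ at the root) so that the ``left-block/right-block'' dichotomy is genuinely the unique degree of freedom; once that is pinned down the induction goes through without difficulty.
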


\section{The Brownian triangulation: A universal limit for random non-crossing configurations}

Recall that $ \mathcal{D}_{n}$ is a uniform dissection of $P_{n+1}$
and that $ \mathcal{T}_{n}$ stands for its dual plane tree. Recall
also that $ \mathscr{C}_{n}$ is a uniform non-crossing tree of $P_n$
and that $ \mathscr{T}_{n}$ stands for its shape. In the following,
we will view both $ \mathcal{D}_{n}$ and $ \mathscr{C}_{n}$ as
random closed subsets of $ \overline{ \mathbb{D}}$ as suggested by Fig.\,\ref {fig1}.
 Recall that the \emph{Hausdorff distance}
between two closed subsets of $A,B \subset \overline{ \mathbb{D}}$
is
 \begin{eqnarray*} \mathrm{d_{Haus}}(A,B) &=& \inf\big\{ \varepsilon >0 : A \subset B^{(\varepsilon)}
 \mbox{ and }B \subset A^{(\varepsilon)}\big\}, \end{eqnarray*}
 where $X^{(\varepsilon)}$ is the $ \varepsilon$-enlargement of a set $X \subset     \overline{ \mathbb{D}}$.
 The set of all closed subsets of $ \overline{ \mathbb{D}}$ endowed with the Hausdorff distance is a compact
 metric space. Recall that the Brownian triangulation $\mathcal{B}$ is defined by \eqref{defbl}. The main result of this section is:

\begin{thm}\label{thm:brownianlimit} The following two convergences in distribution hold for the
Hausdorff metric on closed subsets of $ \overline{ \mathbb{D}}$:

$$  (i) \quad   \mathcal{D}_{n}  \xrightarrow[n\to\infty]{(d)}  \mathcal{B},  \hspace{4cm} (ii) \quad   \mathscr{C}_{n}  \xrightarrow[n\to\infty]{(d)}  \mathcal{B}.$$

 \end{thm}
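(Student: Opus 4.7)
My plan is to reduce both statements to convergence of a suitably normalized encoding function of an underlying conditioned Galton--Watson tree towards the normalized Brownian excursion $\mathbbm{e}$, and then deduce Hausdorff convergence directly from the definition \eqref{defbl} of $\mathcal{B}$. By Skorokhod's representation theorem I would work throughout on a probability space on which the relevant encoding function converges almost surely, uniformly on $[0,1]$.

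For (i), Proposition \ref{prop:unif} identifies $\mathcal{T}_n$ with a $\mu$-Galton--Watson tree conditioned on having $n$ leaves. Since $\mu$ is critical with finite variance, Lemma \ref{estimee} together with the invariance principle for Galton--Watson trees conditioned by their number of leaves (from \cite{K:leaves}) yield convergence of the rescaled Lukasiewicz path $(W_{\lfloor nt\rfloor}/\sqrt{n})_{t\in[0,1]}$ of $\mathcal{T}_n$ towards $\sigma\mathbbm{e}$ for an explicit $\sigma>0$. Listing the leaves $u_1,\dots,u_n$ of $\mathcal{T}_n$ in lexicographical order, I would identify $u_k$ with the polygon vertex $\exp(-2\pi\textrm{i}k/(n+1))$ of $P_{n+1}$; each diagonal of $\mathcal{D}_n$ then corresponds to an inner edge of $\mathcal{T}_n$ separating the leaves into a cyclic block $\{u_a,\dots,u_b\}$ and its complement, and joins the two associated boundary vertices.

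The first Hausdorff bound $\mathcal{D}_n\subset\mathcal{B}^{(\varepsilon)}$ (eventually) follows by contradiction: any offending diagonal would produce indices $(s_n,t_n)=((a-1)/n,b/n)$ such that the rescaled Lukasiewicz walk takes the same value at both times and is at least this value in between; extracting a convergent subsequence $(s_n,t_n)\to(s,t)$ and passing to the a.s.\ uniform limit, one gets $\mathbbm{e}(s)=\mathbbm{e}(t)=\inf_{[s,t]}\mathbbm{e}$, so $s\sim t$ and the corresponding chord of $\mathcal{B}$ is close to the diagonal, a contradiction. The reverse bound $\mathcal{B}\subset\mathcal{D}_n^{(\varepsilon)}$ is obtained by approximating every pair $s\sim t$ of $\mathbbm{e}$ by matching near-minima of the discrete Lukasiewicz walk (which correspond to an actual inner edge of $\mathcal{T}_n$), and then using the compactness of $\{(s,t):s\sim t\}\subset[0,1]^2$ to cover it by finitely many open sets.

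For (ii), Theorem \ref{thm:MarckertPanholzer} identifies $\mathscr{T}_n$ with a Galton--Watson tree (modified only at the root) conditioned on total progeny $n$; the root modification is asymptotically negligible and the standard invariance principle yields convergence of the rescaled contour function to $\sigma'\mathbbm{e}$. The key combinatorial observation is that when the vertices of $\mathscr{T}_n$ are listed in depth-first planar order $v_0,\dots,v_{n-1}$, the vertex $v_k$ is precisely the polygon vertex $\exp(-2\pi\textrm{i}k/n)$ of $P_n$, so each parent--child edge of $\mathscr{T}_n$ becomes exactly a chord between two such exponentials in $\mathscr{C}_n$; the same matching argument as in (i) then delivers both Hausdorff bounds. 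The main obstacle in both parts is the reverse Hausdorff bound: $\mathcal{B}$ has a continuum of chords (of Hausdorff dimension $3/2$), while $\mathcal{D}_n$ and $\mathscr{C}_n$ each contain only $O(n)$ chords, so one needs a uniform density statement matching pairs coming from the discrete encoding to the matching pairs $s\sim t$ of $\mathbbm{e}$, which I would derive from the a.s.\ uniform convergence of the encoding function together with a standard compactness argument on $[0,1]^2$.
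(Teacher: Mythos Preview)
For part~(i) your strategy is sound and could be completed, though it differs from the paper's in two ways worth noting. First, you index diagonals by leaf indices $a,b$ but then speak of the Lukasiewicz walk, which is indexed by \emph{all} $\zeta(\mathcal{T}_n)$ vertices; translating between these two time-scales requires precisely the uniform distribution of leaves (Lemma~\ref{lem:repartitionleaves}), which the paper proves and uses together with the contour function rather than the Lukasiewicz path. Second, for the inclusion $\mathcal{D}_\infty\subset\mathcal{B}$ you propose a direct contradiction/compactness argument, whereas the paper sidesteps this entirely: it observes that any subsequential Hausdorff limit $\mathcal{D}_\infty$ is a lamination and then invokes the almost-sure \emph{maximality} of $\mathcal{B}$ among laminations of $\overline{\mathbb{D}}$ (from \cite{LGP08}) to conclude $\mathcal{D}_\infty=\mathcal{B}$. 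The maximality shortcut is cleaner, but your route is also viable.

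For part~(ii), however, your ``key combinatorial observation'' is false. It is \emph{not} true that the $k$-th vertex of $\mathscr{T}_n=S(\mathscr{C}_n)$ in depth-first order is the polygon vertex $\exp(-2\pi\mathrm{i}k/n)$: the paper explicitly remarks that the shape map $\mathscr{C}\mapsto S(\mathscr{C})$ is not injective, which would be impossible under your claim. (Already on $P_3$ the two non-crossing paths with edge sets $\{\{0,1\},\{1,2\}\}$ and $\{\{0,2\},\{2,1\}\}$ have identical shape but different depth-first vertex orderings.) What \emph{is} true is only the approximate statement of Lemma~\ref{lemnc}: the polygon position of a vertex $u\in\mathscr{T}_n$ can be recovered from $\mathscr{T}_n$ up to an error of at most $|u|$. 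Since heights in $\mathscr{T}_n$ are $O(\sqrt{n})=o(n)$ by \eqref{cvcontour}, this error is negligible in the limit, but exploiting it is more delicate than your outline suggests: a single parent--child edge of $\mathscr{T}_n$ no longer corresponds to a chord of $\mathscr{C}_n$ with controllable endpoints, so the paper instead shows that the image in $\mathscr{C}_n$ of the \emph{entire geodesic} $\llbracket u_n,v_n\rrbracket$ between two suitably chosen leaves lies within $O(\varepsilon)$ of the target chord $[e^{-2\mathrm{i}\pi s},e^{-2\mathrm{i}\pi t}]$ (see \eqref{firstinclu}--\eqref{finalnc}), and then again uses maximality of $\mathcal{B}$ for the reverse inclusion. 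Lemma~\ref{lemnc} together with this geodesic argument is the missing ingredient in your proposal.
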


The main ingredient in the proof of Theorem \ref{thm:brownianlimit}
is a scaling limit theorem for functions coding the trees
$ \mathcal{T}_{n}$ and $ \mathscr{T}_{n}$.  In order to
state this result, let us introduce the contour function
associated to a plane tree.

 Fix a tree $\tau$ and consider a particle that starts from the
root and visits continuously all the edges of $\tau$ at unit speed
(assuming that every edge has unit length). When leaving a vertex,
the particle moves towards the first non visited child of this
vertex if there is such a child, or returns to the parent of this
vertex. Since all the edges will be crossed twice, the total time needed
to explore the tree is $2 (\zeta(\tau)-1)$. For $0 \leq t \leq
2(\zt-1)$, $C_\tau(t)$ is defined as the distance to the root of the
position of the particle at time $t$. For technical reasons, we set
$C_\tau(t)=0$ for $t \in [2(\zt-1), 2 \zt]$. The function
$C_{\tau}(\cdot)$ is called the contour function of the tree $\tau$.
See \cite{LG06} for a rigorous definition. For $ t \in [0,2( \zt-1)]$ and $ u \in \tau$, we say that the contour process visits the vertex $u$ at time $t$ if the particle is at $u$ at time $t$. Similarly, if we say that the contour process visits an edge $ \mathfrak {e}$ if the particle belongs to $ \mathfrak {e}$ at time $t$.

\bigskip

Let  $ \mathbbm{e}$ bet the normalized excursion of linear Brownian
motion. The following convergences in distribution will be useful for our purposes: \begin{eqnarray}    \label{cvcontour2} \left( \frac{C_{\mathcal{T}_{n}}(2 \z( \mathcal {T}_n)t)}{\sqrt{\z( \mathcal {T}_n)}}\right)_{0 \leq t \leq 1}
 &\xrightarrow[n\to\infty]{(d)}& \left( (3\sqrt{2}-4)^{-1/2} \mathbbm{e}(t) \right)_{0 \leq t \leq 1},\\
 \label{cvcontour}\left( \frac{C_{\mathscr{T}_{n}}(2nt)}{\sqrt{n}}\right)_{0 \leq t \leq 1}
 &\xrightarrow[n\to\infty]{(d)}& \left( 2\sqrt{\frac{2}{3}} \mathbbm{e}(t)\right)_{0 \leq t \leq
 1}.
\end{eqnarray}
The convergence \eqref{cvcontour2} has been proved by Kortchemski \cite[Theorem 5.9]{K:leaves}, and \eqref{cvcontour} has been obtained by Marckert and Panholzer \cite[Proposition 4]{MP02}.

\bigskip
The proof of Theorem
\ref{thm:brownianlimit} will be different for dissections
and non-crossing trees, although the main ideas are the same in both cases. Notice for example that in the case of non-crossing trees, there is no need to consider a dual structure since the shape of the non-crossing tree
already yields a plane tree.

\subsection{Large uniform dissections}

\subsubsection{The Brownian triangulation is the limit of large uniform dissections}

In \cite{K:lam}, a general convergence result is proved for dissections whose
dual tree is a conditioned Galton-Watson tree whose offspring distribution belongs to the domain of attraction of a
stable law. Since our approach to  the convergence of
large uniform non-crossing trees towards the Brownian triangulation
will be similar in spirit, we reproduce the main steps of the proof
in our particular finite variance case.

The following lemma, which is an easy consequence of \cite[Corollary
3.3]{K:leaves}, roughly says that leaves are distributed uniformly
in a conditioned Galton-Watson tree. Formally, if $\tau$ is a plane tree, for $0 \leq t \leq  2 \z(\tau)-2$, we let $\Lambda_{\tau}(t)$ be the number of leaves among the vertices of
$\tau$ visited by the contour process up to time $t$, and we set $\Lambda_{\tau}(t)= \lt$ for $2 \z(\tau)-2 \leq t \leq 2\zt$.

\begin{lem}\label{lem:repartitionleaves}We have
 \begin{eqnarray} \label{eq:equirep}\sup_{0 \leq t \leq 1} \left| \frac{\Lambda_{\t_n}\ \left( 2 \z(\t_n) t \right)}{n} - t \right| & \xrightarrow[n\to\infty]{(\P)} & 0, \end{eqnarray}where $(\mathbb{P})$ stands for the convergence in probability. 
\end{lem}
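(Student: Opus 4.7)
The plan is to reduce \eqref{eq:equirep} to an equidistribution statement for the contour times of the leaves of $\t_n$, and then to invoke \cite[Corollary 3.3]{K:leaves}. List the $n$ leaves of $\t_n$ in the order they are visited by the contour process, and let $\sigma_n(k)$ denote the contour time at which the $k$-th such leaf is visited ($1 \leq k \leq n$), with the conventions $\sigma_n(0)=0$ and $\sigma_n(n+1)=2\zeta(\t_n)$. By construction $\Lambda_{\t_n}$ is a non-decreasing step function equal to $k$ on $[\sigma_n(k),\sigma_n(k+1))$, and an elementary computation gives
$$\sup_{0 \leq t \leq 1} \left| \frac{\Lambda_{\t_n}(2\zeta(\t_n)\,t)}{n} - t \right| \ \leq \ \frac{1}{n} + \max_{0 \leq k \leq n} \left| \frac{\sigma_n(k)}{2\zeta(\t_n)} - \frac{k}{n} \right|.$$
It therefore suffices to show that the maximum on the right-hand side converges to $0$ in probability.

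The heart of the matter is an equidistribution statement for the leaves in contour order, and this is exactly the kind of input that \cite[Corollary 3.3]{K:leaves} is built to provide for a critical finite-variance Galton-Watson tree conditioned on having $n$ leaves. Combined with the contour-scaling \eqref{cvcontour2}, it yields that for every fixed $s \in [0,1]$,
$$\frac{\sigma_n(\lfloor s n \rfloor)}{2\zeta(\t_n)} \ \xrightarrow[n\to\infty]{(\P)} \ s.$$
Intuitively, this says that the $k$-th visited leaf sits at contour time roughly $(k/n)\cdot 2\zeta(\t_n)$, which is exactly the assertion that the leaves are uniformly spread along the contour of the tree.

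To pass from this pointwise convergence in probability to the required uniform bound, I would exploit the monotonicity of $k \mapsto \sigma_n(k)$ and the continuity of $s \mapsto s$ on $[0,1]$. Fixing $\varepsilon > 0$, applying the previous display at each of the finitely many points of the grid $\{j\varepsilon : 0 \leq j \leq \lceil 1/\varepsilon \rceil\}$ and sandwiching the prelimit between consecutive grid values using monotonicity yields
$$\max_{0 \leq k \leq n} \left| \frac{\sigma_n(k)}{2\zeta(\t_n)} - \frac{k}{n} \right| \ \leq \ 2\varepsilon + R_n^{(\varepsilon)},$$
with $R_n^{(\varepsilon)} \to 0$ in probability as $n \to \infty$. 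Combined with the reduction of the first paragraph and letting $\varepsilon \to 0$, this proves \eqref{eq:equirep}. The only nontrivial ingredient in the plan is the equidistribution of leaves from \cite[Corollary 3.3]{K:leaves}; this is where one really uses the detailed analysis of conditioned Galton-Watson trees, and once it is available, the remainder of the argument is routine.
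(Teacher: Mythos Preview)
Your proposal is correct and follows the same route as the paper: both treat the lemma as a direct consequence of \cite[Corollary 3.3]{K:leaves}. The paper in fact gives no argument beyond that citation, whereas you spell out the reduction to the contour times $\sigma_n(k)$ of the leaves and the standard monotonicity (P\'olya-type) upgrade from pointwise to uniform convergence; these details are routine and accurate.
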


\proof[Proof of Theorem \ref{thm:brownianlimit} part (i)] 
We can apply Skorokhod's representation theorem (see e.g.
\cite[Theorem 6.7]{Bill}) and assume, without loss of generality,
that the convergences \eqref{cvcontour2}  and \eqref{eq:equirep} hold almost surely and we
aim at showing that $ \mathcal{D}_{n}$ converges almost surely
towards the Brownian triangulation $\mathcal{B}$ defined by
\eqref{defbl}. Since the space of compact subsets of $\Db$ equipped
with the Hausdorff metric is compact, it is sufficient to show that
the sequence $(\mathcal{D}_{n})_{n \geq 1}$ has a unique accumulation point which is $\mathcal{B}$. We fix $ \omega$ such that both convergences \eqref{cvcontour2}  and \eqref{eq:equirep} hold for this value of $ \omega$. Up to extraction, we thus suppose that
$(\mathcal{D}_{n})_{n \geq 1}$ converges towards a certain compact subset $\mathcal{D}_\infty$
of $\Db$ and we aim at showing that
$\mathcal{D}_\infty=\mathcal{B}$.

We first show that $\mathcal{B} \subset \mathcal{D}_\infty$. Fix
$0<s<t<1$ such that $ \mathbbm{e}(s)= \mathbbm{e}(t) =
\min_{[s\wedge t, s \vee t]} \mathbbm{e}$. We first consider the case when we have also  $\mathbbm{e}(r) > \mathbbm{e}(s)$ for every  $r \in (s,t)$. Let us prove that $ [e^{-2\textrm{i}\pi
s},e^{-2\textrm{i}\pi t}] \subset  \mathcal{D}_\infty$. Using the convergence
\eqref{cvcontour2}, one can find an edge of $\t_n$ such that if $s_n$ is the time of the first visit of this edge by the contour process and if $t_n$ is the time of its last visit, then $s_n / (2 \z(\mathcal {T}_n)) \rightarrow s$ and $ t_n  / (2 \z(\mathcal {T}_n)) \rightarrow t$ as $n \rightarrow \infty$, see Fig. \ref{fig:dissunif}. 
 \begin{figure}[h!]
 \begin{center}
 \includegraphics[height=6cm]{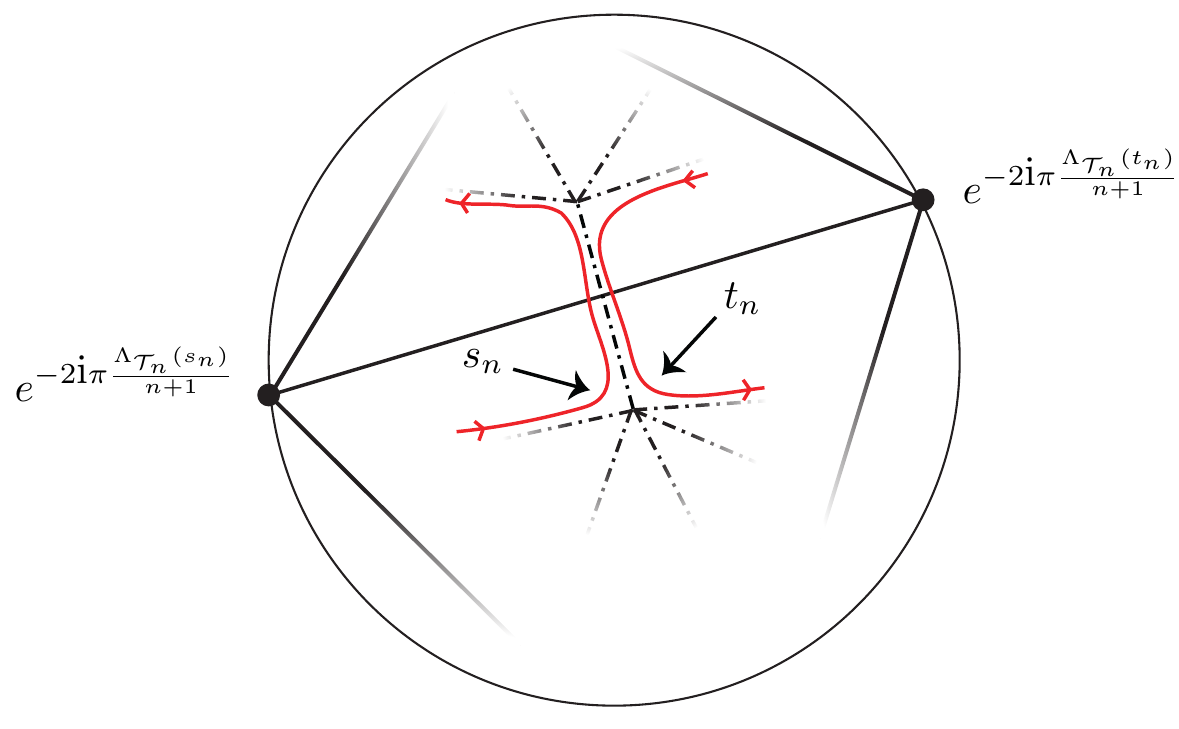}
 \caption{ \label{fig:dissunif} The arrows show the first visit time $s_n$ and last visit time $t_n$ of an edge of $ \mathcal {T}_n$.}
 \end{center}
 \end{figure}
 
Since the sides of $P_ {n+1}$, excepting the side connecting $1$ to $e^ {2 \textrm {i} \pi/ {(n+1)}}$, are in one-to-one correspondence with the leaves of $ \mathcal{T}_n$ we have (see Fig. \ref {fig:dissunif}):
 \begin{eqnarray*}\left[e^{-2\textrm{i} \pi \frac{\Lambda_{\t_n}(
s_n)}{n+1}}, e^{-2\textrm{i} \pi \frac{\Lambda_{\t_n}( t_n)}{n+1}}\right]&
\in& \mathcal{D}_n. \end{eqnarray*}
We refer to \cite {K:lam} for a complete proof. From Lemma
\ref{lem:repartitionleaves}, we can pass to the limit and obtain
$[e^{-2\textrm{i}\pi s},e^{-2\textrm{i}\pi t}] \subset \mathcal{D}_\infty$. 

Let us now  suppose that $ \mathbbm{e}(s)= \mathbbm{e}(t) =
\min_{[s\wedge t, s \vee t]} \mathbbm{e}$ and, moreover, there exists $r \in (s,t)$ such that
 $\mathbbm{e}(r) = \mathbbm{e}(s)$. Since local minima of Brownian motion are distinct, there exist two sequences of real numbers $( \alpha_n)_ {n \geq 1}$ and $( \beta_n)_ {n \geq 1}$ taking values in $[0,1]$ such that $ \alpha_n \rightarrow s$, $  \beta_n \rightarrow t$ as $ n \rightarrow \infty$ and such that for every $ n \geq 1$ and $r \in ( \alpha_n, \beta_n)$ we have $\mathbbm{e}(r) > \mathbbm{e}( \alpha_n) = \mathbbm{e}(\beta_{n})$. The preceding argument yields $[e^{-2\textrm{i}\pi  \alpha_n},e^{-2\textrm{i}\pi  \beta_n}] \subset \mathcal{D}_\infty$ for every $n \geq 1$. Since  $\mathcal{D}_\infty$ is closed, we conclude that $\mathcal{B} \subset \mathcal{D}_\infty$.

The reverse inclusion is obtained by making use of a maximality
argument. More precisely, it is easy to show that
$\mathcal{D}_\infty$ is a \emph{lamination}, that is a closed subset
of $ \overline{ \mathbb{D}}$ which can be written as a union of chords that do not intersect
each other inside $\D$. However, the Brownian triangulation $
\mathcal{B}$, which is also a lamination, is almost surely maximal for the
inclusion relation  among the set of all laminations of $ \overline{
\mathbb{D}}$, see \cite{LGP08}. It follows that  $
\mathcal{D}_\infty=\mathcal{B}$. This completes the proof of the
theorem.\endproof

\subsubsection{Application to the study of the number of intersections with a given chord}

We now explain how the ingredients of the previous proof can be used to
study the number of intersections of a large  dissection with
a given chord. For $ \a, \b \in [0,1]$, we denote by $I^{\a,\b}_n$ the
 number of intersections of $\mathcal{D}_n$  with the chord $[e^{-2\textrm{i} \pi \a},e^{-2 \textrm{i} \pi \b}]$, with the convention $I^{\a,\b}_n=0$ if $[e^{-2\textrm{i} \pi \a},e^{-2 \textrm{i} \pi \b}] \subset \mathcal {D}_n$.

\begin{prop}\label{prop:intersections}
 For $0 <\a < \b < 1$ we have
  \begin{eqnarray*}\frac{I^{\a,\b}_n}{\sqrt{n}} & \xrightarrow[n\to\infty]{(d)} &
 \frac{\mathbbm{e}(\b-\a)}{\sqrt{3\sqrt{2}-4}}, \end{eqnarray*}
 where $\mathbbm{e}$ is the normalized excursion of linear Brownian motion.
\end{prop}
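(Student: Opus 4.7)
My plan is to identify $I^{\alpha,\beta}_n$ with a tree distance in $\mathcal{T}_n$ and then pass to the continuous limit using \eqref{cvcontour2}, Lemma~\ref{lem:repartitionleaves}, and the re-rooting invariance of the Brownian excursion.

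\emph{Step 1 (Combinatorial reformulation).} Following the chord $[e^{-2\mathrm{i}\pi\alpha},e^{-2\mathrm{i}\pi\beta}]$ from one endpoint to the other, each intersection with a diagonal of $\mathcal{D}_n$ corresponds to jumping from a face of $\mathcal{D}_n$ to an adjacent one, so $I^{\alpha,\beta}_n$ equals the dual-graph distance between the two faces at the endpoints. Writing $u_\alpha,u_\beta$ for the leaves of $\mathcal{T}_n$ associated with the sides of $P_{n+1}$ containing $e^{-2\mathrm{i}\pi\alpha}$ and $e^{-2\mathrm{i}\pi\beta}$, these two extremal faces are exactly the parents of $u_\alpha$ and $u_\beta$. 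Since the two leaf-edges encode polygon sides rather than diagonals, outside the negligible event that an endpoint coincides with a vertex of $P_{n+1}$ one has
$$I^{\alpha,\beta}_n \;=\; d_{\mathcal{T}_n}(u_\alpha,u_\beta) - 2.$$

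\emph{Step 2 (Locating the leaves in the contour).} The contour exploration of $\mathcal{T}_n$ visits the leaves in the cyclic order of polygon sides starting from the root side, so $u_\alpha$ is the $\lfloor\alpha n\rfloor$-th leaf encountered (up to $O(1)$). If $s_\alpha$ denotes its contour visit time, inverting Lemma~\ref{lem:repartitionleaves} (using that $t\mapsto t$ is continuous and strictly increasing) gives $s_\alpha/(2\zeta(\mathcal{T}_n)) \to \alpha$ in probability, and likewise for $\beta$.

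\emph{Step 3 (Scaling limit of the distance).} The classical contour formula
$$d_{\mathcal{T}_n}(u_\alpha,u_\beta) = C_{\mathcal{T}_n}(s_\alpha) + C_{\mathcal{T}_n}(s_\beta) - 2\min_{[s_\alpha\wedge s_\beta,\, s_\alpha\vee s_\beta]} C_{\mathcal{T}_n},$$
combined with the joint convergence of the contour function \eqref{cvcontour2} and of the contour times from Step~2 — realized almost surely on a common probability space via Skorokhod's representation theorem and passed to the limit by continuous mapping, using the almost-sure continuity of $\mathbbm{e}$ — yields
$$\frac{I^{\alpha,\beta}_n}{\sqrt n} \;\xrightarrow[n\to\infty]{(d)}\; \frac{1}{\sqrt{3\sqrt{2}-4}}\Big(\mathbbm{e}(\alpha)+\mathbbm{e}(\beta)-2\min_{[\alpha,\beta]}\mathbbm{e}\Big).$$
\emph{Step 4 (Re-rooting).} Finally, the identity in law
$$\mathbbm{e}(\alpha)+\mathbbm{e}(\beta)-2\min_{[\alpha,\beta]}\mathbbm{e} \;\stackrel{(d)}{=}\; \mathbbm{e}(\beta-\alpha)$$
is a standard consequence of the re-rooting invariance of the normalized Brownian excursion (equivalently, of the Brownian CRT under re-rooting at a fixed vertex $[\alpha]$), and gives the statement of the proposition.

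The main delicate point is Step~3: I need \eqref{cvcontour2} and Lemma~\ref{lem:repartitionleaves} \emph{jointly}, not just separately, in order to evaluate $C_{\mathcal{T}_n}$ at the random times $s_\alpha,s_\beta$ and pass to the limit. The standard way around this is to note that both the contour function and the leaf-counting function are read off from the same underlying tree (and ultimately from the same Lukasiewicz path), so that a joint scaling limit holds and Skorokhod's representation couples both convergences; continuous mapping then finishes the argument. A secondary bookkeeping point is matching the normalization $\sqrt n$ appearing in the proposition with the normalization $\sqrt{\zeta(\mathcal{T}_n)}$ of \eqref{cvcontour2}, absorbed via the deterministic limit of $\zeta(\mathcal{T}_n)/n$.
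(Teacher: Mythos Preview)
Your approach is essentially the paper's: identify $I^{\alpha,\beta}_n$ with a graph distance in $\mathcal{T}_n$ between two leaves, locate those leaves in the contour via Lemma~\ref{lem:repartitionleaves}, apply~\eqref{cvcontour2} together with the contour--distance formula~\eqref{eq:dgrc}, and conclude by re-rooting. The one place where the paper is more careful is your ``negligible event that an endpoint coincides with a vertex of $P_{n+1}$'': this event is deterministic, not random, and for rational $\alpha$ it occurs for a positive-density set of $n$ (namely whenever $(n+1)\alpha\in\mathbb{Z}$), so it cannot simply be dismissed. The paper handles this by observing that perturbing the endpoints by at most $1/(n+1)$ changes $I^{\alpha,\beta}_n$ by at most $2\Delta^{(n)}$, where $\Delta^{(n)}$ is the maximal vertex degree, and then invoking $\Delta^{(n)}/\sqrt{n}\to 0$ in probability (a forward reference to Theorem~\ref{thm:vertexdegrees}); with that correction your argument and the paper's coincide.
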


\begin{proof}

For $1
\leq i \leq n$, denote by $l^n(i)$ the $i$-th leaf of $\t_n$ in the lexicographical order. Then,
for $1 \leq i<j \leq n$, the construction of the dual tree shows
that for every $s \in \left(\frac{i-1}{n+1},\frac{i}{n+1} \right)$
and $t \in \left(\frac{j-1}{n+1},\frac{j}{n+1} \right)$, $I^{s,t}_n$
is equal to the graph distance in the tree $\t_n$ between the leaves
$l^n(i)$ and $l^n(j)$.  Indeed, the edges of $ \t_n$ that intersect the chord $[e^{-2\textrm{i} \pi \a},e^{-2 \textrm{i} \pi \b}]$ are exactly the edges composing the shortest path between $l^n(i)$ and $l^n(j)$ in $ \mathcal{T}_n$. However, the situation is more complicated
when $e^{-2 \textrm{i} \pi s}$ or $e^{-2 \textrm{i} \pi t}$ coincides with a vertex of
$P_{n+1}$. To avoid these particular cases, we note that for
$\varepsilon,\varepsilon' \in (-\frac{1}{n+1},\frac{1}{n+1})$
we have
$$\left|I^{s+\varepsilon,t+\varepsilon'}_n-I^{s,t}_n\right| \leq 2 \Delta^{(n)},$$
where $\Delta^{(n)}$ is the maximal number of diagonals of $ \mathcal{D}_n$ adjacent to
a vertex of $P_{n+1}$. We claim that
\begin{eqnarray} \label{eq:majorationdelta1}
\frac{\Delta^{(n)}}{\sqrt{n}}  & \xrightarrow[n \to \infty]{(  \mathbb{P} )} & 0.
\end{eqnarray}
We leave the proof of the claim to the reader since a
(much) stronger result will be given in Theorem \ref{thm:vertexdegrees}. Let $0 < \alpha < \beta< 1$.
Set $i_n = \lfloor (n+1) \alpha \rfloor+1$ and $j_n= \lfloor (n+1)
\beta \rfloor+1$. Choose $n$ sufficiently large so that $j_n<n$. The
preceding discussion shows that
\begin{equation}\label{eq:majorationdelta2}
\left|I^{\a,\b}_n- \mathrm{d_{gr}}(l^n(i_n),l^n({j_n}))\right| \leq 2
\Delta^{(n)},\end{equation} where $ \mathrm{d_{gr}}$ stands for the graph
distance between two vertices in $\t_n$.  Now note that the graph
metric of the tree $\t_n$ can be recovered from  the contour
function  of $\t_n$, see \cite{DLG02}:  If $u_n,v_n$ are two vertices
of $\t_n$ such that the contour process reaches $u_n$ (resp. $v_n$)  at the instant $s_n$ (resp. $t_n$), then
 \begin{eqnarray} \label{eq:dgrc} \mathrm{d_{gr}}(u_n,v_n) \quad=\quad C_{\t_{n}}(s_{n})+C_{\t_n}(t_n)- 2 \inf_{u \in [s_n\wedge t_n, s_n\vee
t_n]} C_{\t_{n}} (u). \end{eqnarray}
If we choose $u_n = l^n({i_n})$ and $v_n=l^n({j_n})$ with respective first visit times $s_n$ and $t_n$,  Lemma \ref{lem:repartitionleaves} shows that $s_n/2\zeta(\t_n) \to \alpha$ and $t_n/2\zeta(\t_n) \to \beta$ in probability as $n \to \infty$. Consequently, using  \eqref{eq:majorationdelta1},(\ref{eq:majorationdelta2}), together with \eqref{cvcontour2}, and \eqref{eq:dgrc} we finally obtain
$$\frac{I^{\a,\b}_n}{\sqrt{n}} \quad \xrightarrow[n\to\infty]{(d)} \quad  (3\sqrt{2}-4)^{-1/2} \left( \mathbbm{e}( \alpha) + \mathbbm{e}(\beta)-2 \inf_{[ \alpha\wedge\beta, \alpha \vee \beta]}\mathbbm{e} \right).$$
To conclude, observe
 that by the re-rooting property of the Brownian excursion (see \cite[Proposition 4.9]{MM06}), the variable $\mathbbm{e}( \alpha) + \mathbbm{e}(\beta)-2 \inf_{[ \alpha\wedge\beta, \alpha \vee \beta]}\mathbbm{e}$ has the same distribution as
 $\mathbbm{e}(\beta-\alpha)$.
\end{proof}

\begin{rem}The preceding proof can be adapted easily to show the following functional convergence in distribution
\begin{eqnarray*}\left(\frac{I^{\a,\b}_n}{\sqrt{n}}\right)_{\begin{subarray}{c}0 \leq \a \leq 1\\
 0 \leq \b \leq 1 \end{subarray}} &
\xrightarrow[n\to\infty]{(d)} &  (3\sqrt{2}-4)^{-1/2} \cdot
 \left(\mathbbm{e}( \alpha) + \mathbbm{e}(\beta)-2 \inf_{[ \alpha\wedge\beta, \alpha \vee \beta]}\mathbbm{e}\right)_{\begin{subarray}{c}0 \leq \a \leq 1\\
 0 \leq \b \leq 1 \end{subarray}}.\end{eqnarray*}
\end{rem}

\subsection{Large uniform non-crossing trees}

In order to study large uniform non-crossing trees, the following
lemma will be useful. It roughly states that the location of a vertex in a non-crossing tree $ \mathscr{C}$ can be deduced from its location in the shape of  $\mathscr{C}$ up to an error that is bounded by its height. Recall that if $ \tau$ is a tree and $u \in \tau$, $k_u$ denotes the number of children of $u$.

\begin{lem} \label{lemnc} Let $\mathscr{C}$ be a non-crossing tree
with $n$ vertices and shape $S(\mathscr{C})=\tau$. Fix a vertex $u
\in \tau$ and let $a \in
\{0,1,\ldots,n-1\}$ be such that the  vertex  in
$\mathscr{C}$ corresponding to $u$ is $\exp(-2\mathrm{i}\pi{a}/{n})$. Then there
exists $i_{0} \in \{ 1, ... , k_{u}+1\}$ such that
$$ \left |a - \#\{ v \in \tau : v \prec ui_{0}\}\right | \leq |u|,$$
where $\prec$ stands for the strict lexicographical order on $ \mathcal{U}$.
\end{lem}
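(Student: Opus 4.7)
The plan is to prove the lemma by induction on $|u|$, using the following structural fact as the main input. For every $u \in \tau$, the polygon labels $\{a(w) : w \in \sigma_u\tau\}$ form a contiguous clockwise arc $[\alpha_u, \beta_u]$ of length $\zeta(\sigma_u\tau)$ on $P_n$ (where $a(w)$ denotes the polygon label of the vertex of $\mathscr{C}$ corresponding to $w$); for the root one sets $\alpha_\varnothing = 1$, so the arc is the whole polygon traversed clockwise from the polygon vertex labeled $1$. Moreover, the children's sub-arcs $[\alpha_{uj}, \beta_{uj}]$ for $j = 1, \ldots, k_u$ appear along $[\alpha_u, \beta_u]$ in the shape order $u1, u2, \ldots, uk_u$, with the polygon vertex $v(u)$ corresponding to $u$ interpolated after the $m_u$-th child's arc for some $m_u \in \{0, \ldots, k_u\}$ (with $m_\varnothing = k_\varnothing$ for the root). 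This structural fact is proved by induction on $\zeta(\sigma_u\tau)$, using that the chords at $v(u)$ in the non-crossing tree have cyclic angular order at $v(u)$ matching the cyclic polygon order of their endpoints.

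Writing $N_\tau(u) := \#\{v \in \tau : v \prec u\}$, the structural fact yields $a \equiv \alpha_u + \sum_{j=1}^{m_u} \zeta(\sigma_{uj}\tau) \pmod{n}$, and combined with the trivial identity $N_\tau(u(m_u{+}1)) = N_\tau(u) + 1 + \sum_{j=1}^{m_u} \zeta(\sigma_{uj}\tau)$, the choice $i_0 = m_u + 1$ gives
$$a - N_\tau(u\,i_0) \;\equiv\; \alpha_u - N_\tau(u) - 1 \pmod{n}.$$
The lemma therefore reduces to the estimate $|\alpha_u - N_\tau(u) - 1| \leq |u|$ (understood modulo $n$), which I would prove by induction on $|u|$. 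For a child $ui$, a case distinction on whether $i \leq m_u$ or $i > m_u$ gives $\alpha_{ui} = \alpha_u + \sum_{j < i} \zeta(\sigma_{uj}\tau)$ or $\alpha_{ui} = \alpha_u + 1 + \sum_{j < i} \zeta(\sigma_{uj}\tau)$ respectively, whence
$$\alpha_{ui} - N_\tau(ui) - 1 \;=\; \begin{cases} \alpha_u - N_\tau(u) - 2, & i \leq m_u, \\ \alpha_u - N_\tau(u) - 1, & i > m_u, \end{cases}$$
so the quantity drifts by at most one per generation. The base case is the root: taking $\alpha_\varnothing = 1$ yields $\alpha_\varnothing - N_\tau(\varnothing) - 1 = 0$, and $i_0 = k_\varnothing + 1$ gives $N_\tau(u\,i_0) = n \equiv 0 = a \pmod{n}$, matching the bound $|u| = 0$.

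The main obstacle will be verifying the structural fact, and in particular the claim that the shape-children order agrees with the clockwise order of sub-arcs along $[\alpha_u, \beta_u]$. This is a planar topology argument: at any vertex $v$ of a convex polygon, the clockwise angular order of the other polygon vertices seen from $v$ coincides with the clockwise polygon order starting at $v$'s clockwise polygon neighbor (and skipping $v$). Under this correspondence, the shape convention ``children ordered by clockwise rotation around $v(u)$ starting just after the parent direction'' translates directly into ``children ordered by clockwise polygon order starting just after the polygon label of $v(u^*)$'', which is precisely the clockwise order of sub-arcs along $[\alpha_u, \beta_u]$.
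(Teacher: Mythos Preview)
Your proof is correct and takes a genuinely different route from the paper's. The paper argues directly: it considers the image $\mathcal{L}$ in $\mathscr{C}$ of the geodesic from $\varnothing$ to $u$, observes that for a suitable $i_0$ the first $i_0-1$ children of $u$ (and their descendants) lie on one side of $\mathcal{L}$, and then compares the set $E=\{1,e^{-2\mathrm{i}\pi/n},\ldots,e^{-2\mathrm{i}\pi a/n}\}$ with $\{v\in\tau:v\prec ui_0\}$ in one shot---the only discrepancy being the at most $|u|$ strict ancestors of $u$, which immediately gives the bound. Your argument instead isolates the structural fact that each subtree occupies a contiguous arc with the children's sub-arcs in shape order and $v(u)$ interpolated after the $m_u$-th one, derives the exact identity $a-N_\tau(u(m_u{+}1))=\alpha_u-N_\tau(u)-1$, and then controls the right-hand side by an induction showing it drifts by $0$ or $-1$ per generation. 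Both proofs pick the same $i_0$ (your $m_u+1$ coincides with the paper's choice), but the paper's approach is a single geometric counting argument while yours is an inductive bookkeeping of an offset; your version makes the error term more explicit (it lies in $\{-|u|,\ldots,0\}$) at the cost of having to establish the arc-decomposition lemma. One small point: your reduction is phrased modulo~$n$, which is only needed at the root; for $u\neq\varnothing$ the arcs sit inside $\{1,\ldots,n-1\}$ and all identities hold on the nose. The root case $u=\varnothing$ is degenerate in both arguments (the paper simply calls it ``trivial'') and is immaterial for the application.
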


\proof  Let $u \in \tau \backslash \{\varnothing\}$. Consider the discrete geodesic path from $\varnothing$ to $u$ in $
\tau$ and its image $ \mathcal{L}$ in $ \mathscr{C}$. There exists
$1 \leq i_0\leq k_u+1$ such that, in $\mathscr{C}$, the first $i_0-1$
chidren of $u$ as well as their descendants are folded on the
left of $ \mathcal{L}$ (oriented from the root) and the rest of the descendants of $u$ are
folded on the right of $ \mathcal{L}$, see
Fig.\,\ref{preuvenc}. Now, consider the set $E= \{ 1, \exp(-2
\mathrm{i}\pi /n), ... , \exp(-2 \mathrm{i}\pi a/n)\}$ of all the
vertices of $P_n$ that are between $1$ and $\exp(-2 \mathrm{i}\pi a/n)$ in clockwise order. A
geometric argument (see Fig. \ref{preuvenc}) shows that if a vertex $x$ of $ \mathscr{C}$ belongs to $E$, then its corresponding vertex in the tree $\tau$  must belong to the set $\{ v \in \tau : v\prec ui_0\}$. On the
other hand, if $w \in \{ v \in \tau : v \prec ui_0\}$ and if,
moreover, $w$ is not a strict ancestor of $u$ in $ \tau$ then its corresponding vertex in $ \mathscr{C}$ belongs to $E$. Consequently, we have
$$ \# \{ v \in \tau : v \prec ui_0\} - |u|+1 \quad \leq\quad  \# E = a+1 \quad \leq\quad  \#\{ v \in \tau : v \prec ui_0\}.$$
The lemma follows (the case $u= \varnothing$ being trivial).
\begin{figure}[!h]
 \begin{center}
 \includegraphics[height=6cm]{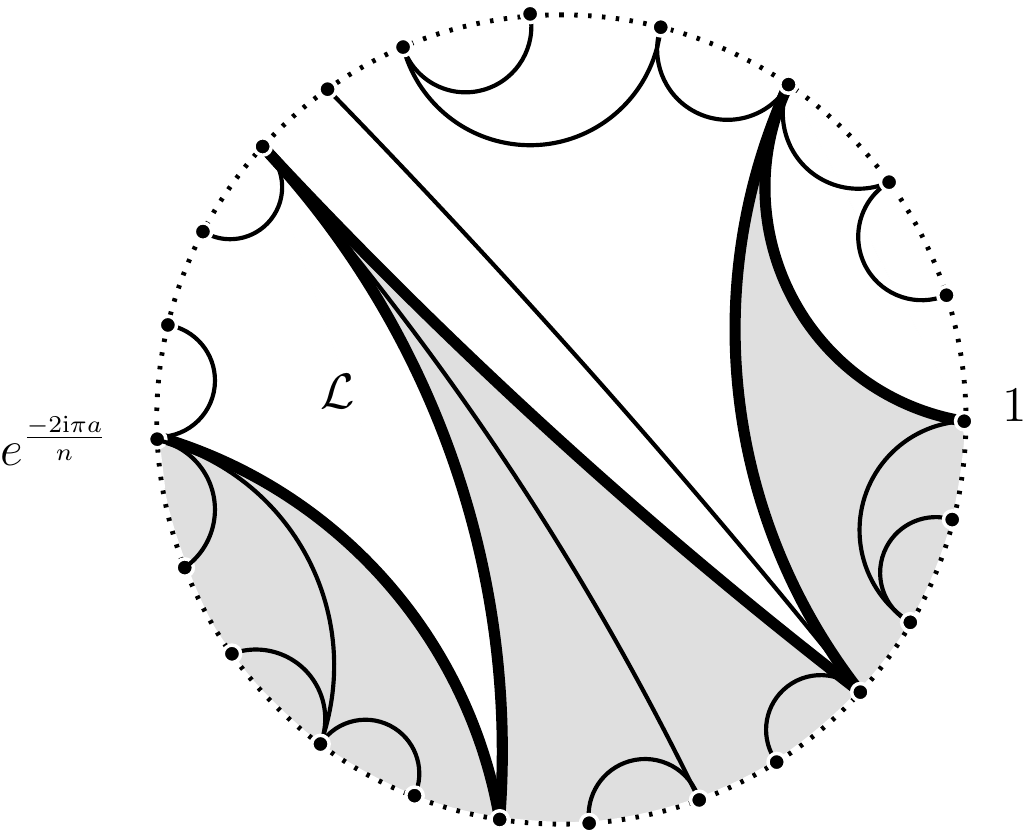}
 \caption{ \label{preuvenc} Illustration of the proof of Lemma \ref{lemnc}. We represent the non-crossing tree $ \mathscr{C}$ with curved chords for better visibility. The left-hand side of $ \mathcal{L}$ is in gray whereas its right-hand side is in white.}
 \end{center}
 \end{figure}

\endproof


For our purpose, it will be convenient to reinterpret this lemma
using the contour function. Fix a tree $\tau$, and define
$ \mathcal{Z}_{\tau}(j)$ as the number of distinct vertices of $\tau$
visited by the contour process of $\tau$ up to time $j$, for $ 0
\leq j \leq 2 \zeta(\tau)-2$. For technical reasons, we set $
\mathcal{Z}_{\tau}(j)= \zt$ for $j = 2 \zt-1 $ and $j=2 \zt$, and then extend
$ \mathcal{Z}_{\tau}(\cdot)$ to the whole segment $[0,2\zt]$ by linear
interpolation. Note that a vertex $u \in \tau$ with $k_{u}$ children
is visited exactly $k_{u}+1$ times by the contour function of
$\tau$, and that if $t^{(0)}, ... , t^{(k_{u})}$ are these times, then for
every $i_{0} \in \{ 0, ... , k_{u}\}$ we have
 \begin{eqnarray} \label{etoile} \#\{ v \in \tau : v \prec u(i_{0}+1)\} = \mathcal{Z}_{\tau} \left (t^{(i_{0})} \right).  \end{eqnarray}

The idea of the proof of Theorem \ref{thm:brownianlimit} part (ii) is the following. Let $ \mathscr{C}$ be a large non-crossing tree with shape $S(\mathscr{C})$. Pick a vertex $u \in S( \mathscr{C})$ corresponding to the point $\exp(-2ia/n)$ in $ \mathscr{C}$. The goal is to recover $a$ (with error at most $o(n)$) from the knowledge of $ S( \mathscr{C})$ and $u$. Assume that $u$ is a leaf of $ S(\mathscr{C})$. Then $u$ is visited only once by the contour process, say at time $t_u$. By Lemma \ref{lemnc} the quantity $|a- \mathcal{Z}_{S(\mathscr{C})}(t_u)|$ is less than the height of the tree $ S(\mathscr{C})$ which is small in comparison with $n$ by \eqref{cvcontour}.  Hence $a$ in  is known up to an error $o(n)$. We will see that the control by the leaves of $ S(\mathscr{C})$ is sufficient for proving the convergence towards the Brownian triangulation.

The next lemma is an analogous to Lemma \ref{lem:repartitionleaves}.

\begin{lem} We have
 \begin{eqnarray}    \label{cvcontourbis} \sup_{0 \leq t \leq 1} \left|
 \frac{\mathcal{Z}_{ \mathscr{T}_{n}}(2nt)}{n}-t\right|
 &\xrightarrow[n\to\infty]{(\P)}& 0. \end{eqnarray}
\end{lem}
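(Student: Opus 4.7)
The plan is to reduce the statement to the scaling limit \eqref{cvcontour} via a direct combinatorial identity between $\mathcal{Z}_\tau$ and the contour function $C_\tau$.

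The first (and essentially only) nontrivial step is to establish that, for every plane tree $\tau$ and every $t \in [0, 2(\zeta(\tau)-1)]$,
$$\mathcal{Z}_\tau(t)\ =\ 1 + \frac{t + C_\tau(t)}{2}.\qquad (\star)$$
At an integer time $j$ the contour walk has taken $j$ unit steps, each of which is either \emph{descending} (traversing an edge away from the root, discovering a new child) or \emph{ascending} (returning to the parent). Writing $D(j)$ and $U(j)$ for these two counts, one has $D(j)+U(j)=j$ and $D(j)-U(j) = C_\tau(j)$, so $D(j) = (j + C_\tau(j))/2$; since the root contributes one initial vertex and each descending step discovers a new one, $\mathcal{Z}_\tau(j) = 1 + D(j)$. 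Both sides of $(\star)$ are piecewise affine with matching values at integer times, so $(\star)$ holds throughout $[0,2(\zeta-1)]$.

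Applied to $\tau = \mathscr{T}_n$, which has total progeny $n$, the identity $(\star)$ gives, for every $t \in [0, 1 - 1/n]$,
$$\frac{\mathcal{Z}_{\mathscr{T}_n}(2nt)}{n} - t\ =\ \frac{1}{n} + \frac{C_{\mathscr{T}_n}(2nt)}{2n}.$$
On the remaining interval $t \in [1-1/n, 1]$ both $\mathcal{Z}_{\mathscr{T}_n}(2nt)/n$ and $t$ lie within $1/n$ of $1$ (by the extension convention $\mathcal{Z}_\tau \equiv n$ on $[2(n-1), 2n]$). Taking the supremum yields
$$\sup_{t \in [0, 1]}\left|\frac{\mathcal{Z}_{\mathscr{T}_n}(2nt)}{n} - t\right|\ \leq\ \frac{2}{n} + \frac{1}{2n}\sup_{s \in [0, 2n]} C_{\mathscr{T}_n}(s).$$

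To conclude, I would invoke \eqref{cvcontour}: combined with the continuous mapping theorem, it implies that $n^{-1/2}\sup_{s \in [0, 2n]} C_{\mathscr{T}_n}(s)$ converges in distribution to $2\sqrt{2/3}\,\sup_{[0,1]}\mathbbm{e}$, which is a.s.\ finite and hence bounded in probability. Dividing by a further $\sqrt{n}$ gives $n^{-1}\sup_{s\in[0,2n]} C_{\mathscr{T}_n}(s) \to 0$ in probability, and plugging into the bound above yields \eqref{cvcontourbis}. There is no real obstacle here: the identity $(\star)$ is a bookkeeping exercise about the contour walk, and once it is in hand the rest is just a change of normalization using the already-proven scaling limit of Marckert and Panholzer.
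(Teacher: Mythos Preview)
Your proof is correct and follows exactly the standard route the paper is alluding to: the identity $\mathcal{Z}_\tau(j)=1+\tfrac{1}{2}(j+C_\tau(j))$ is precisely the content of \cite[Section~1.6]{LG06}, and the paper's one-line proof is just a pointer to this argument. You have simply written it out in full.
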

\proof This is a standard consequence of \eqref{cvcontour}, see e.g.\,\cite[Section 1.6]{LG06}.
\endproof

\proof[Proof of Theorem \ref{thm:brownianlimit} part (ii)]

Similarly to the proof of part $(i)$ of the theorem, we can
apply Skorokhod's theorem and assume that the convergences
\eqref{cvcontour} and \eqref{cvcontourbis} hold almost surely.  We fix $ \omega$ such that both convergences \eqref{cvcontour}  and \eqref{cvcontourbis} hold for this value of $ \omega$. Up to extraction, we thus suppose that $(\mathscr{C}_{n})_{n \geq 1}$ converges towards a compact subset $\mathscr{C}_\infty$ of $\Db$ and we aim at showing that
$\mathscr{C}_\infty=\mathcal{B}$.

We first show that $\mathcal{B} \subset \mathscr{C}_\infty$. Fix
$0<s<t<1$ such that $ \mathbbm{e}(s)= \mathbbm{e}(t) =
\min_{[s\wedge t, s \vee t]} \mathbbm{e}$ and assume furthermore that $
\mathbbm{e}(r) > \mathbbm{e}(s)$ for $r \in (s,t)$. Let us show that
$[e^{-2\textrm{i} \pi s},e^{-2\textrm{i} \pi t}] \subset \mathscr{C}_\infty$. To this
end, we fix $\varepsilon>0$ and show that
 $ [e^{-2\textrm{i} \pi s},e^{-2\textrm{i} \pi t}] \subset \mathscr{C}_{n}^{(6
 \varepsilon)}$ for $n$ sufficiently large (recall that $X ^{(\varepsilon)}$ is the $ \varepsilon$-enlargement of a
closed subset $ X \subset \overline{ \mathbb{D}}$). Using the convergence \eqref{cvcontour}, for every $n$ large enough, one
can find integers $0 \leq s_{n}< t_{n} \leq 2n-2$ such that if
$u_{n}$ (resp.\,$v_{n}$) denotes the vertex of $ \mathscr{T}_{n}$
visited at time $s_{n}$ (resp.\,$t_{n}$) by the contour process, the
following three properties are satisfied:
\begin{itemize}
\item $s_{n}/2n \to s$, $t_{n}/2n \to t$,
\item $u_{n}$ and $v_{n}$ are leaves in $ \mathscr{T}_{n}$,
\item   for every vertex $w_{n}$ in $\llbracket u_{n},v_{n}\rrbracket$ (the discrete geodesic path between $u_{n}$ and $v_{n}$
in $ \mathscr{T}_{n}$) and every visit time $r_{n}$ of $w_{n}$ by
the contour process, we have
 \begin{eqnarray} \label{eq:geodproche}\min\left( \left|\frac{r_{n}}{2n} - s\right|, \left|\frac{r_{n}}{2n} - t \right|\right)
 &\leq& \varepsilon. \end{eqnarray}
\end{itemize}
For the second property, we can for instance use the fact that local
maxima of $ \mathbbm{e}$ are dense in $[0,1]$. We now claim that
under these assumptions, for $n$ large enough, the image $
\mathcal{L}_n$ in $ \mathscr{C}_{n}$ of the discrete geodesic path $\llbracket
u_{n},v_{n}\rrbracket$ in $ \mathscr{T}_{n}$ lies within Hausdorff distance
$6\varepsilon$ from the line segment $[e^{-2\textrm{i} \pi s},e^{-2\textrm{i} \pi t}]$.

Indeed, let $w_{n} \in \llbracket u_{n},v_{n} \rrbracket$ and let $a_{n} \in \{0,1, \ldots , n-1\}$ such that the vertex of $ \mathscr{C}_{n}$ corresponding to $w_{n}$ is  $z_{n} =\exp( -2 \mathrm{i}\pi a_{n}/n)$. Applying Lemma \ref{lemnc} to $u = w_{n}$ and using \eqref{etoile} we can find a time $r_{n}$ at which the contour process is at $w_{n}$ and such that  \begin{eqnarray*}  |a_{n} -
\mathcal{Z}_{\mathscr{T}_{n}}(r_{n})| &\leq& |w_{n}|.  \end{eqnarray*} By the
convergence \eqref{cvcontourbis} and the bound \eqref{eq:geodproche},
 there exists an integer $N \geq 1$, independent of the choice
of $w_n$, such that for $n \geq N$
\begin{eqnarray*} \min \left( \left|\frac{\mathcal{Z}_{\mathscr{T}_{n}}(r_{n})}{n}-s \right|
,\left|\frac{\mathcal{Z}_{\mathscr{T}_{n}}(r_{n})}{n}-t
\right|\right) &\leq& 2\varepsilon.
\end{eqnarray*} On the other hand, thanks to the convergence \eqref{cvcontour} we have
$$\frac{1}{n}\displaystyle \sup_{u \in \mathscr{T}_{n}
}|u|=\frac{1}{\sqrt{n}} \frac{1}{\sqrt{n}}  \displaystyle \sup_{0
\leq t \leq 1} C_{\mathscr{T}_{n}}(2nt) \quad
\xrightarrow[n\to\infty]{}  \quad 0 .$$ We
conclude that there exists an integer $N' \geq 1$, independent of the choice of
$w_n$, such that for $n \geq N'$, we have $\min ( |\frac{a_{n}}{n}-s |
,|\frac{a_{n}}{n}-t|) \leq 3 \varepsilon$ and thus that $ \min\left(\left|z_{n}-e^{-2
\mathrm{i}\pi s}|,|z_{n}-e^{-2 \mathrm{i}\pi t}\right|\right) \leq 6
\varepsilon$. It follows that for large $n$, we have \begin{equation}
\label{firstinclu} \mathcal{L}_{n}\subset [e^{-2\textrm{i} \pi s},e^{-2\textrm{i} \pi
t}]^{(6\varepsilon)} .\end{equation} On the other hand, $u_{n}$ and
$v_{n}$ are leaves, so they are visited at a unique time by the contour
process. By the same arguments, for every $n$ sufficiently large, we deduce that their images $\a_{n}$
and $\b_{n}$ in $ \mathscr{C}_{n}$ satisfy $|\a_{n}
-e^{-2\textrm{i} \pi s}| \leq 6\varepsilon$ and $|\b_{n} -e^{-2\textrm{i} \pi t}|
\leq 6\varepsilon$. Consequently, since $\mathcal{L}_{n}$ is a finite union 
of line segments connecting $\alpha_{n}$ to $\beta_{n}$, 
 we deduce from \eqref{firstinclu} that for every $n$ sufficiently large
enough
  \begin{equation} \label{finalnc}[e^{-2\textrm{i} \pi s},e^{-2\textrm{i} \pi t}]  \subset  \mathcal{L}_{n}^{(6\varepsilon)}
  \subset \mathscr{C}_{n}^{(6\varepsilon)}.
  \end{equation}
The case when there exists $r \in (s,t)$ such that $ \mathbbm{e}(s)=
\mathbbm{e}(r)= \mathbbm{e}(t)$ (this $r$ is then a.s.\,unique by
standard properties of the Brownian excursion) is treated exactly as in the proof of the first assertion of this theorem. We conclude that
$\mathcal{B} \subset \mathscr{C}_\infty$. The reverse inclusion is obtained by making use of a maximality argument, see part $(i)$.\endproof

\subsection{Universality of the Brownian triangulation and applications}

\label {sec:universality}

The convergence in distribution of random compact subsets towards
the Brownian triangulation yields information on their asymptotic
geometrical properties that are preserved under the Hausdorff convergence.
 Let us give an example of application of this fact.
 
 Let $ \chi_n$ be a random configuration on the vertices of $P_n$, that is a random closed subset made of  line segments connecting some of the vertices of the polygon. Assume that $\chi_n$ converges in distribution towards $ \mathcal{B}$ in the sense of the Hausdorff metric. Let $ \mathrm{diag}( \chi_n)$ be the Euclidean
  length of the longest diagonal of $ \chi_n$. Then, as $n \to \infty$, the law of $ \mathrm{diag}( \chi_n)$ converges in distribution towards the length of the longest chord of the Brownian triangulation, given by: $$\frac{1}{\pi} \frac{3x-1}{x^2(1-x)^2 \sqrt{1-2x}} \mathbf{1}_{\frac{1}{3} \leq x \leq \frac{1}{2}} \mathrm{d}x.$$
 This distribution has been computed in \cite{Ald94b}  (see also \cite{DFHN99}). The preceding convergence follows from the fact that the length of the longest chord  is a continuous function of configurations for the Hausdorff metric.  Similar limit theorems hold for a large variety of other functionals, such as the area of the face with largest area, etc.

\medskip

  It is plausible that many other uniformly distributed
 non-crossing configurations (see \cite{FN99})  converge towards the Brownian triangulation in the Hausdorff sense. We give here a few instances of this phenomenon.

\paragraph{Dissections with constrained face degrees.} 
\begin{thm}\label{prop:gen}Let $\mathcal{A}$ be a non-empty subset of $\{3,4,5,\ldots\}$. Let $\mathbf{D}^{(\mathcal{A})}_n$ be the set of all dissections of $P_{n+1}$ whose face degrees all belong to the set $\mathcal{A}$. We restrict our attention to the values of $n$ for which $ \mathbf{D}_{n}^{( \mathcal{A})} \ne  \varnothing$. \begin{enumerate}[(i)]
\item There exists a probability distribution $\nu_{\mathcal{A}}$ on $\N$ such that if $\sigma_{ \mathcal{A}}^2$ denotes the variance of $\nu_{\mathcal{A}}$, we have 
$$ \#\mathbf{D}^{(\mathcal{A})}_{n-1} \quad  \mathop{\sim}_{n \rightarrow \infty} \quad  \sqrt{\frac{\nu_{\mathcal{A}}(2)^4 \nu_{\mathcal{A}}(0)^3} {2 \pi
\sigma_{\mathcal{A}}^2}} \cdot \frac{n^{-3/2}}{\left(\nu_{\mathcal{A}}(2)
\nu_{\mathcal{A}}(0)\right)^{n}} .$$
\item Let
$\mathcal{D}^{(\mathcal{A})}_n$ be  uniformly distributed over
$\mathbf{D}^{(\mathcal{A})}_n$. Then $\mathcal{D}^{(\mathcal{A})}_n$
converges towards the Brownian triangulation.
\end{enumerate}
\end{thm}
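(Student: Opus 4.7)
The plan is to extend the Galton-Watson framework of Proposition \ref{prop:unif} and the Brownian limit of Theorem \ref{thm:brownianlimit}(i) from the full set of dissections to $\mathbf{D}^{(\mathcal{A})}_n$. Let $\mathcal{B} := \{a - 1 : a \in \mathcal{A}\} \subset \{2, 3, \ldots\}$. Since a face of degree $d$ corresponds in the dual tree to a vertex with $d-1$ children, the duality bijection $\phi$ of Proposition \ref{prop:bijection} puts $\mathbf{D}^{(\mathcal{A})}_n$ in one-to-one correspondence with the set $\T^{(\ell,\mathcal{B})}_n$ of plane trees with exactly $n$ leaves, all of whose non-leaf vertices have a number of children belonging to $\mathcal{B}$. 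It therefore suffices to analyze the uniform random element of $\T^{(\ell,\mathcal{B})}_n$.

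Mimicking the one-parameter family $\{\mu^{(c)}\}$, for any $c>0$ with $\sum_{k \in \mathcal{B}} c^{k-1} \leq 1$ I would define a probability distribution $\rho^{(c)}$ on $\N$ by $\rho^{(c)}(0) = 1 - \sum_{k \in \mathcal{B}} c^{k-1}$, $\rho^{(c)}(k) = c^{k-1}$ for $k \in \mathcal{B}$, and $\rho^{(c)}(k) = 0$ otherwise. A computation identical to the proof of Proposition \ref{prop:unif} -- multiplying offspring probabilities over the vertices of a tree $\tau_0 \in \T^{(\ell,\mathcal{B})}_n$ and using the identities $\sum_{u \in \tau_0} k_u(\tau_0) = \zeta(\tau_0) - 1$ and $\zeta(\tau_0) - \lambda(\tau_0) = \#\{\textrm{internal vertices of }\tau_0\}$ -- yields
$$\mathbb{P}_{\rho^{(c)}}(\tau = \tau_0) \;=\; \rho^{(c)}(0)^n \, c^{n-1},$$
which depends only on $n$. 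Consequently $\mathbb{P}_{\rho^{(c)}}(\,\cdot\,\mid\,\lambda(\tau) = n)$ is uniform on $\T^{(\ell,\mathcal{B})}_n$, and via $\phi$ coincides with the law of the uniform dissection in $\mathbf{D}^{(\mathcal{A})}_n$. I then select the critical value $c = c^*>0$, i.e.\ the unique root of $\sum_{k \in \mathcal{B}} k \, c^{k-1} = 1$ (existence and uniqueness follow from strict monotonicity of the left-hand side and the intermediate value theorem), and set $\nu_\mathcal{A} := \rho^{(c^*)}$; its exponentially decaying tails ensure the finite variance $\sigma_\mathcal{A}^2$.

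For part (i), applying \cite[Theorem 3.1]{K:leaves} to the critical finite-variance distribution $\nu_\mathcal{A}$ yields an asymptotic of the form $\mathbb{P}_{\nu_\mathcal{A}}(\lambda(\tau) = n) \sim K_\mathcal{A} \, n^{-3/2}$ along the support of $\lambda(\tau)$, for an explicit constant $K_\mathcal{A}$ depending on $\nu_\mathcal{A}(0)$, $c^*$ and $\sigma_\mathcal{A}^2$. Dividing this estimate by the formula for $\mathbb{P}_{\nu_\mathcal{A}}(\tau = \tau_0)$ obtained above, exactly as in the proof of Corollary \ref{prop:comptage}, gives the asymptotic count of $\T^{(\ell,\mathcal{B})}_n$ and hence of $\mathbf{D}^{(\mathcal{A})}_{n-1}$; collecting constants (noting in particular that $\nu_\mathcal{A}(2) = c^*$ whenever $2 \in \mathcal{B}$) produces the formula stated in (i). For part (ii) I would reproduce verbatim the proof of Theorem \ref{thm:brownianlimit}(i) with $\mu$ replaced by $\nu_\mathcal{A}$ throughout. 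The two inputs needed, namely the scaling limit of the contour function of the conditioned tree to a constant multiple of the Brownian excursion (analog of \eqref{cvcontour2}) and the equidistribution of its leaves (analog of Lemma \ref{lem:repartitionleaves}), both hold for any critical finite-variance offspring distribution with $\nu_\mathcal{A}(0)>0$ by the results of \cite{K:leaves}, and the remaining geometric maximality argument transfers without change. The chief subtlety is periodicity: when $\mathcal{B}$ has a nontrivial arithmetic structure (for instance a common gcd greater than $1$), $\lambda(\tau)$ is supported on a proper arithmetic progression of $\N$ -- which is precisely why the statement restricts to those $n$ with $\mathbf{D}^{(\mathcal{A})}_n \ne \varnothing$ -- and the asymptotics above must be read along this progression, which is exactly the level of generality of \cite[Theorem 3.1]{K:leaves}.
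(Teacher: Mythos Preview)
Your proposal is correct and follows essentially the same approach as the paper: define the critical offspring distribution $\nu_{\mathcal{A}}$ supported on $\{0\}\cup(\mathcal{A}-1)$ with weights $c_{\mathcal{A}}^{i-1}$, identify the dual tree of a uniform element of $\mathbf{D}^{(\mathcal{A})}_n$ with a $\nu_{\mathcal{A}}$-Galton--Watson tree conditioned on having $n$ leaves, and then rerun the arguments of Corollary~\ref{prop:comptage} and Theorem~\ref{thm:brownianlimit}(i). You actually supply more detail than the paper does (the explicit computation of $\mathbb{P}_{\rho^{(c)}}(\tau=\tau_0)$, the justification that $c^*$ exists, and the remark on periodicity), and your caveat that $\nu_{\mathcal{A}}(2)=c^*$ only when $3\in\mathcal{A}$ is a point the paper glosses over when it writes the asymptotic in terms of $\nu_{\mathcal{A}}(2)$.
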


Note that the case $\mathcal{A}=\{3\}$ corresponds to uniform triangulations
and the case $\mathcal{A}=\{3,4,5,\ldots\}$ corresponds to uniform
dissections.

\begin{proof}The proof of this  statement goes along
the very same lines as the proofs of Corollary \ref{prop:comptage} and Theorem \ref {thm:brownianlimit} (i) by noticing that the dual tree $\phi(\mathcal{D}^{(\mathcal{A})}_n)$
is a Galton-Watson tree conditioned on having $n$ leaves for a certain finite variance offspring
distribution $\nu_{\mathcal{A}}$. More precisely, if we denote the set $ \{ a-1 : \, a \in A\}$ by $\mathcal{A}-1$,   let $c_{\mathcal{A}} \in
(0,1)$ be the unique real number in $(0,1)$ such that
$$\sum_{i \in \mathcal{A}-1} i c_{\mathcal{A}}^{i-1}=1.$$
Then $\nu_{\mathcal{A}}$ is defined by
$$\nu_{\mathcal{A}}(0)=1-\sum_{i \in \mathcal{A}-1} c_{\mathcal{A}}^{i-1}, \qquad\nu_{\mathcal{A}}(i)=c_{\mathcal{A}}^{i-1}  \textrm{  for }i \in
\mathcal{A}-1.$$ Note that $\nu_{\mathcal{A}}(2)=c_{\mathcal{A}}$ and that $\nu_{\mathcal{A}}$ automatically has a finite variance $\sigma_{ \mathcal{A}}^2>0$.
\end{proof}

\paragraph{Non-crossing graphs.}
A \emph{non-crossing graph} of $P_n$ is a graph drawn on the plane, whose vertices are the
vertices of $P_n$ and whose edges are non-crossing line segments. Let $\mathcal{G}_n$ be uniformly distributed over the set of all non-crossing graphs of $P_n$. Note that $\mathcal{G}_n$ can be  seen as a
compact subset of $ \overline{ \mathbb{D}}$. Then 
$\mathcal{G}_{n} $ converges in distribution towards  the Brownian triangulation.

This fact easily follows from the convergence of uniform dissections towards the Brownian triangulation. Indeed, if $\mathcal{G}$ is a non-crossing graph  of $P_n$,
let $\psi(\mathcal{G})$ be the compact subset of $\overline{
\mathbb{D}}$ obtained from $\mathcal{G}$ by adding the sides of
$P_n$. 
As noticed at the end of Section 3.1 in \cite{FN99},
$\psi(\mathcal{G})$ is a dissection, and every dissection has $2^n$
pre-images by $\psi$. It follows that the random dissection
$\psi(\mathcal{G}_n)$ is a uniform dissection of $P_{n}$. 
The conclusion follows, since the Hausdorff distance
between $\psi(\mathcal{G}_{n})$ and $\mathcal{G}_{n}$ tends to $0$.

\paragraph{Non-crossing partitions and non-crossing pair partitions.} A non-crossing partition of $P_n$ is a partition of the vertices of $P_n$ (labeled by the set $ \{1,2, \ldots, n \}$) such that the convex hulls of its blocks are pairwise disjoint (see Fig.  \ref {fig:partitions} where the partition $ \{ \{ 1,2,4,8\},\{ 3\},\{ 5\},\{6,7 \}\}$ is represented). A non-crossing pair-partition of $P_n$ is a non-crossing partition of $P_n$ whose blocks are all of size $2$ (see Fig. \ref {fig:partitions} where the pair-partition $  \{ \{ 1,16\}, \{2,3 \},\{4,7 \},\{5,6 \},\{8,15 \}, \{9,10 \},\{11,14 \}$, $\{12,13 \}\}$ is represented).  
\begin{figure}[!h]
 \begin{center}
 \includegraphics[height=5cm]{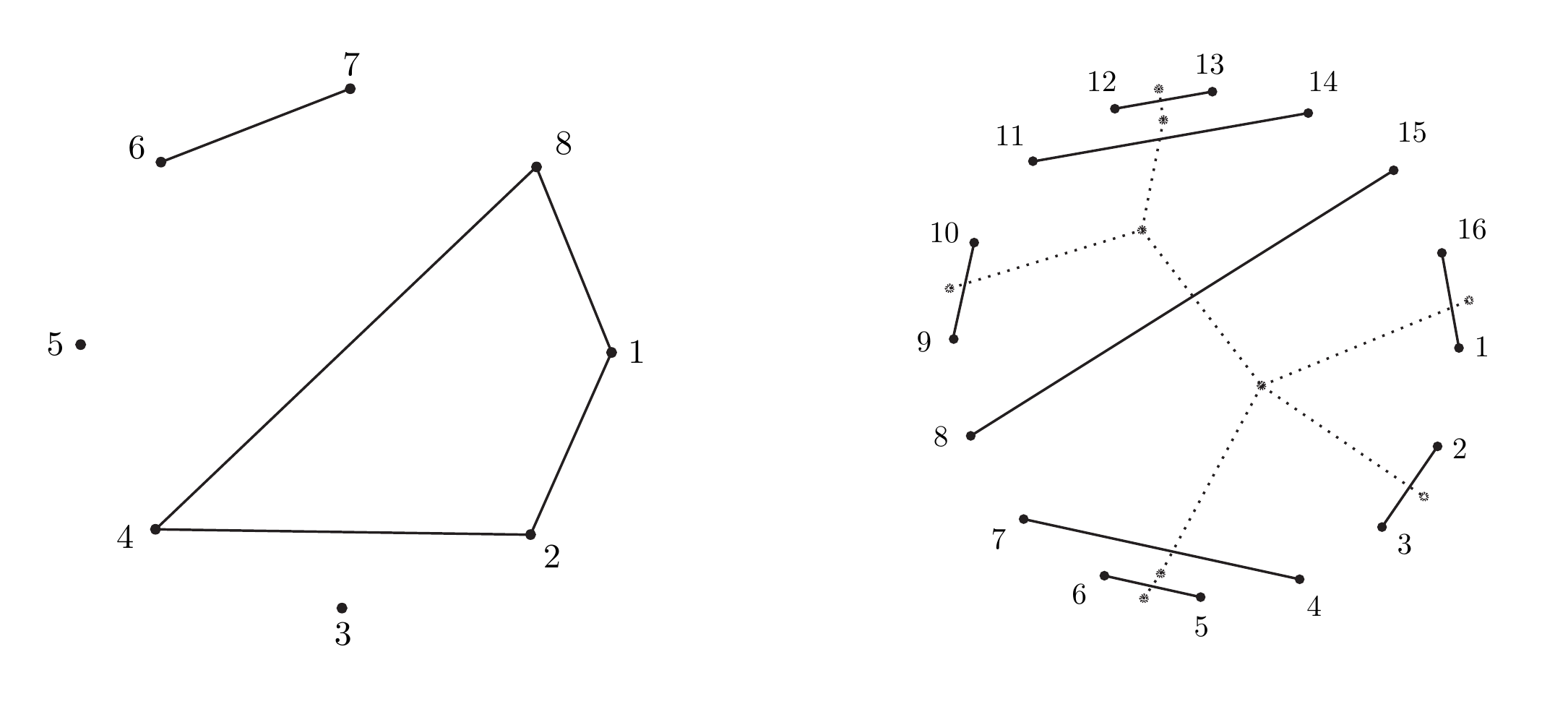}
 \caption{ \label{fig:partitions} A non-crossing partition of $P_8$ and a non-crossing pair-partition of $P_{16}$ together with its dual tree.}
 \end{center}
 \end{figure}
 
Let $ \mathcal{P}^{(2)}_n$ be a uniformly distributed random variable on the set of all non-crossing pair-partitions of $P_{2n}$,  seen as a compact subset of $ \overline{ \mathbb{D}}$. Then $\mathcal{P}^{(2)}_{n} $ converges towards  the Brownian triangulation.

To establish this fact, we rely once again on a coding of $\mathcal{P}^{(2)}_n$ by a critical Galton-Watson tree. One easily sees that the dual tree of $\mathcal{P}^{(2)}_n$ (see Fig. \ref {fig:partitions}) is a uniform tree with $n$ edges, which is also well known to be a Galton-Watson tree with geometric offspring distribution, conditioned on having $n$ edges. One can then show the convergence of $\mathcal{P}_{n} $ towards the Brownian triangulation using the same methods as in the case of uniform dissections. Details are left to the reader.

\medskip

Let us now discuss non-crossing partitions.   Let $ \mathcal{P}_n$ be a uniformly distributed random variable on the set of all non-crossing partitions of $P_{n}$,  and view $ \mathcal {P}_n$ as a random compact subset of $ \overline{ \mathbb{D}}$. Then $\mathcal{P}_{n} $ converges towards  the Brownian triangulation.

This  follows from the convergence of non-crossing pair-partitions. Indeed, given a non-crossing pair-partition of $P_{2n}$, we get a non-crossing partition of $P_{n}$ by identifying the $n$ pairs of vertices of the form $(2i-1,2i)$ for $1 \leq i \leq n$ (see Fig. \ref {fig:partitions} where the non-crossing partition of $P_8$ is obtained by contraction of vertices from the non-crossing pair-partition of $P_ {16}$). This identification gives a bijection between non-crossing partitions of $P_n$ and non-crossing pair partitions of $P_ {2n}$ and the desired result easily follows.

\begin{figure}[!h]  \begin{center}
\includegraphics[height=3cm]{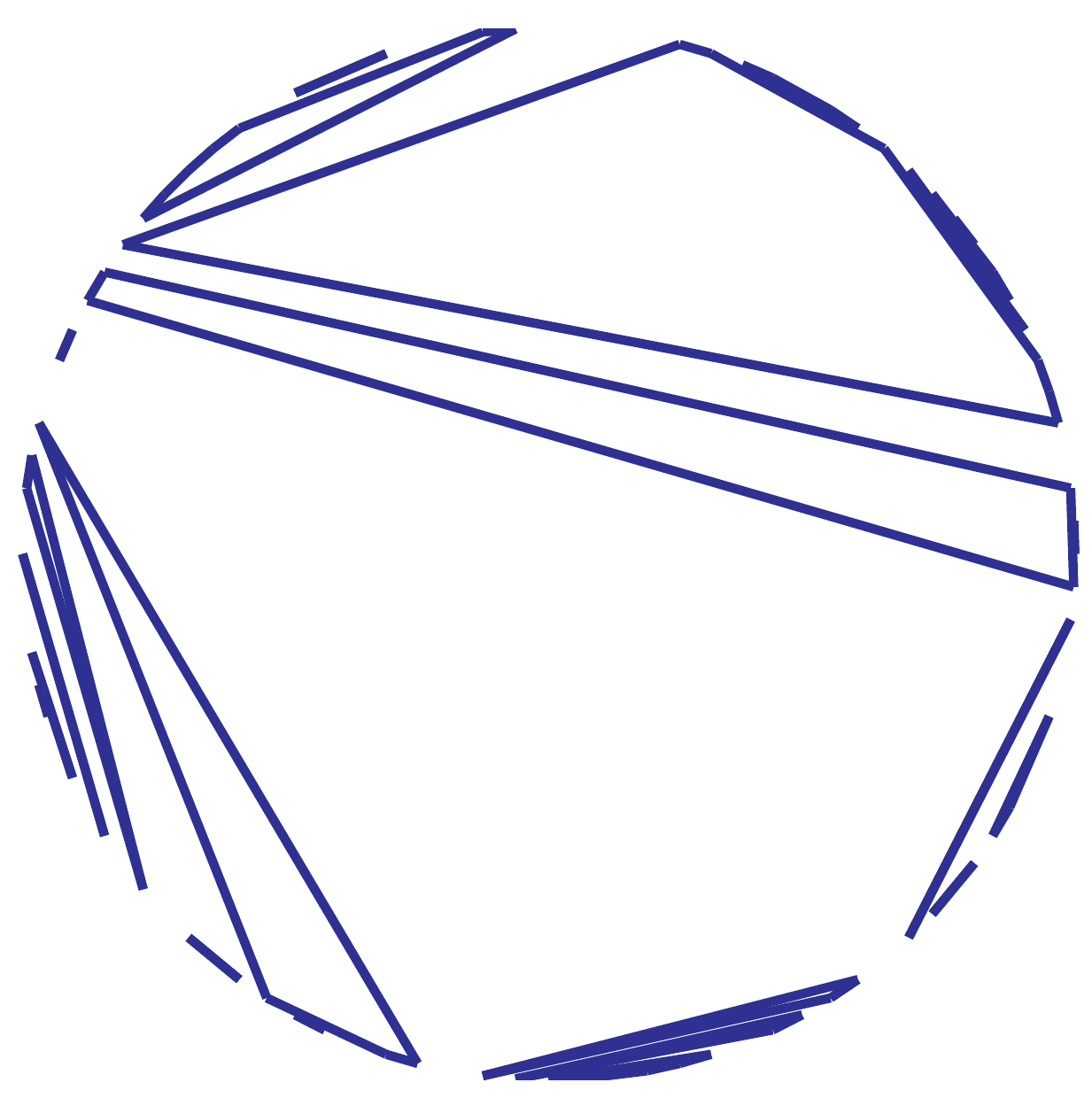} 
\hspace{0.5cm}
\includegraphics[height=3cm]{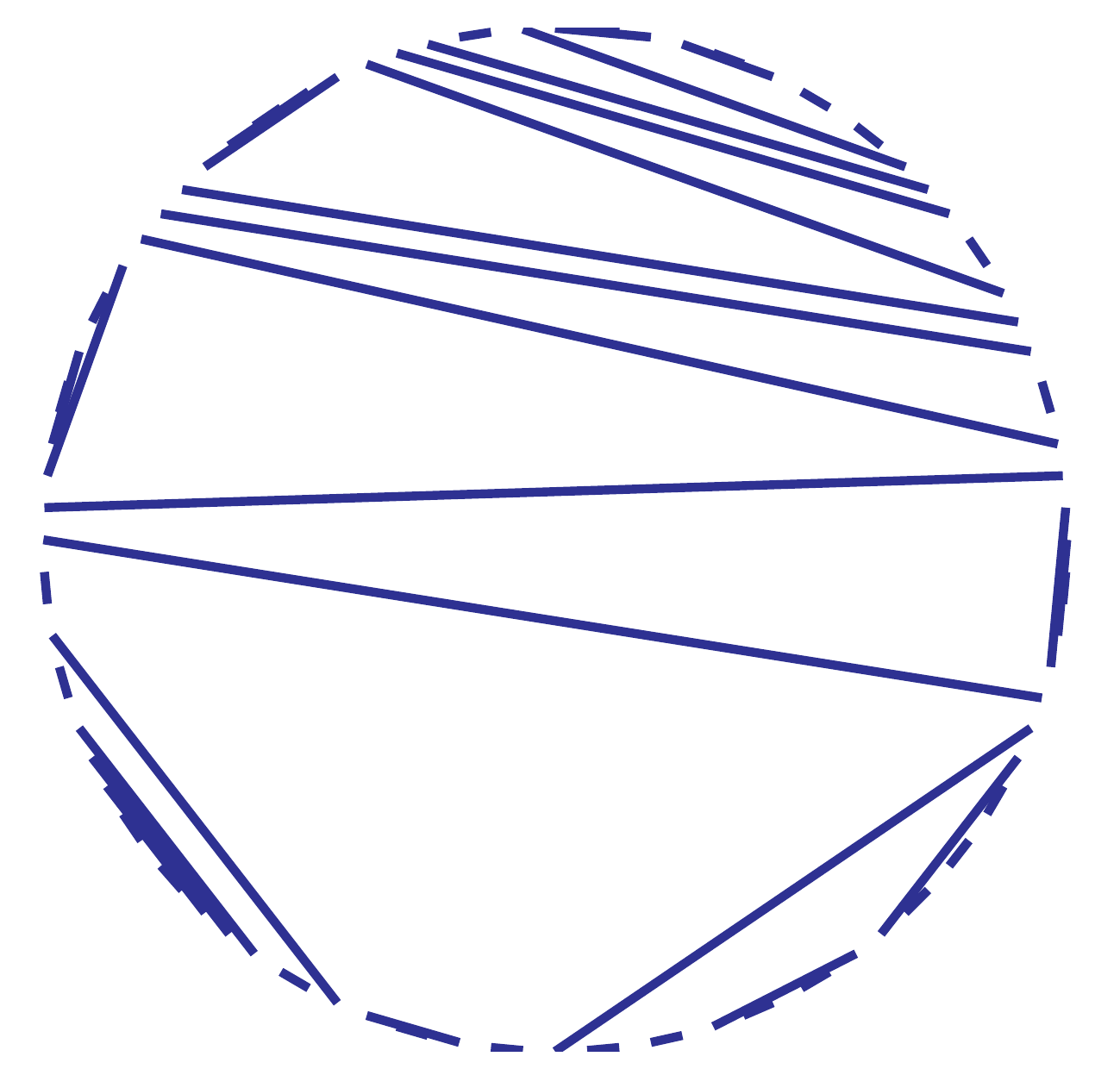}
\caption{
\label{fig2} Uniform non-crossing partition and pair-partition of $P_ {100}$. }  \end{center}
\end{figure}

 At first sight, it may seem mysterious that Galton-Watson trees appear behind many different models of uniform non-crossing configurations. In \cite{FN99}, using suitable parameterizations, Flajolet and Noy manage to find a Lagrange inversion-type implicit equation for the generating functions of these configurations. Generating functions verifying a Lagrange inversion-type implicit equation are those of simply-generated trees, which are very closely related to Galton-Watson trees (see \cite[Section 2.1]{Aldous2}). This explains why Galton-Watson trees are hidden behind various models of uniformly distributed non-crossing configurations.

\section{Graph-theoretical properties of large uniform dissections}
 In this section, we study graph-theoretical properties of large
dissections using the Galton-Watson tree structure
identified in Proposition \ref{prop:unif}. Let us stress that as in
Proposition \ref{prop:gen}, all the results contained in this
section can be adapted easily to uniform dissections constrained on
having all face degrees in a fixed non-empty subset of $\{3,4,\ldots\}$
(and in particular to uniform triangulations).

As previously, $\mathcal{D}_{n}$ is a uniformly distributed dissection of $P_{n+1}$ and $ \mathcal{T}_{n}$ denotes its dual tree with $n$ leaves. We start by recalling the definition and the construction of the
so-called critical Galton-Watson tree conditioned to survive.


\subsection{The critical Galton-Watson tree conditioned to survive}
 \label{cgwi}

If $\tau$ is a tree and $k$ is a nonnegative integer, we let $[\tau]_{k}=  \{u \in \tau : \, |u| \leq k\}$ denote the tree obtained from $\tau$ by keeping the
vertices in the first $k$ generations. Let $ \mathbbm{\xi}=(\xi_{i})_{i\geq 0}$ be an
offspring distribution with $\xi_1 \neq 1$ and $\sum i \xi_{i} =1$.
We denote by $T_{n}$ a Galton-Watson tree with offspring distribution $\xi$ conditioned on
having height at least $n \geq 0$. Kesten \cite[Lemma 1.14]{Kes86} showed that for every $k \geq 0$, we
have the following convergence in distribution
  \begin{eqnarray*} \left[T_{n}\right]_{k} &\xrightarrow[n\to\infty]{(d)}& \left[T_{\infty}\right]_{k}, \end{eqnarray*} where $T_{\infty}$ is a random infinite plane tree called the critical $\xi$-Galton-Watson tree conditioned to survive.

We denote the law of the $\xi$-Galton-Watson tree conditioned to survive by $ \widehat{ \mathbb{P}}_{\xi}$. Let us describe this law (see
\cite{Kes86,LP10}). We let $\overline{\xi}$ be the size-biased
distribution of $\xi$ defined by $ \overline{\xi}_{k} = k \xi_{k}$
for $k \geq 0$. The random variable $T_{\infty}$ distributed
according to $ \widehat{ \mathbb{P}}_{\xi}$ is described as follows. Let $(D_i)_ {i \geq 0}$ be a sequence of i.i.d. random variables distributed according to $
\overline{\xi}$. Let also $(U_i)_ {i \geq 1}$ be a sequence of random variables such that, conditionally on $(D_i)_ {i \geq 0}$, $(U_i)_ {i \geq 1}$ are independent and  $U_{k+1}$ is uniformly distributed over $\{ 1, 2, ... , D_{k}\}$ for every $k \geq 0$. The tree $T_{ \infty}$ has a unique spine, that is a unique infinite path
$(\varnothing, U_{1},U _1 U_{2}, U_1 U_2 U_{3}, ...) \in {\mathbb{N}^*}^{ \mathbb{N}^*}$ and the degree of $U_{1}U_{2}... U_{k}$ is $D_k$. Finally,
conditionally on $(U_i)_ {i \geq 1}$ and $(D_i)_ {i \geq 0}$ all the remaining subtrees are independent  ${\xi}$-Galton-Waton trees.
\begin{figure}[!h]
  \begin{center}
  \includegraphics[height=8cm]{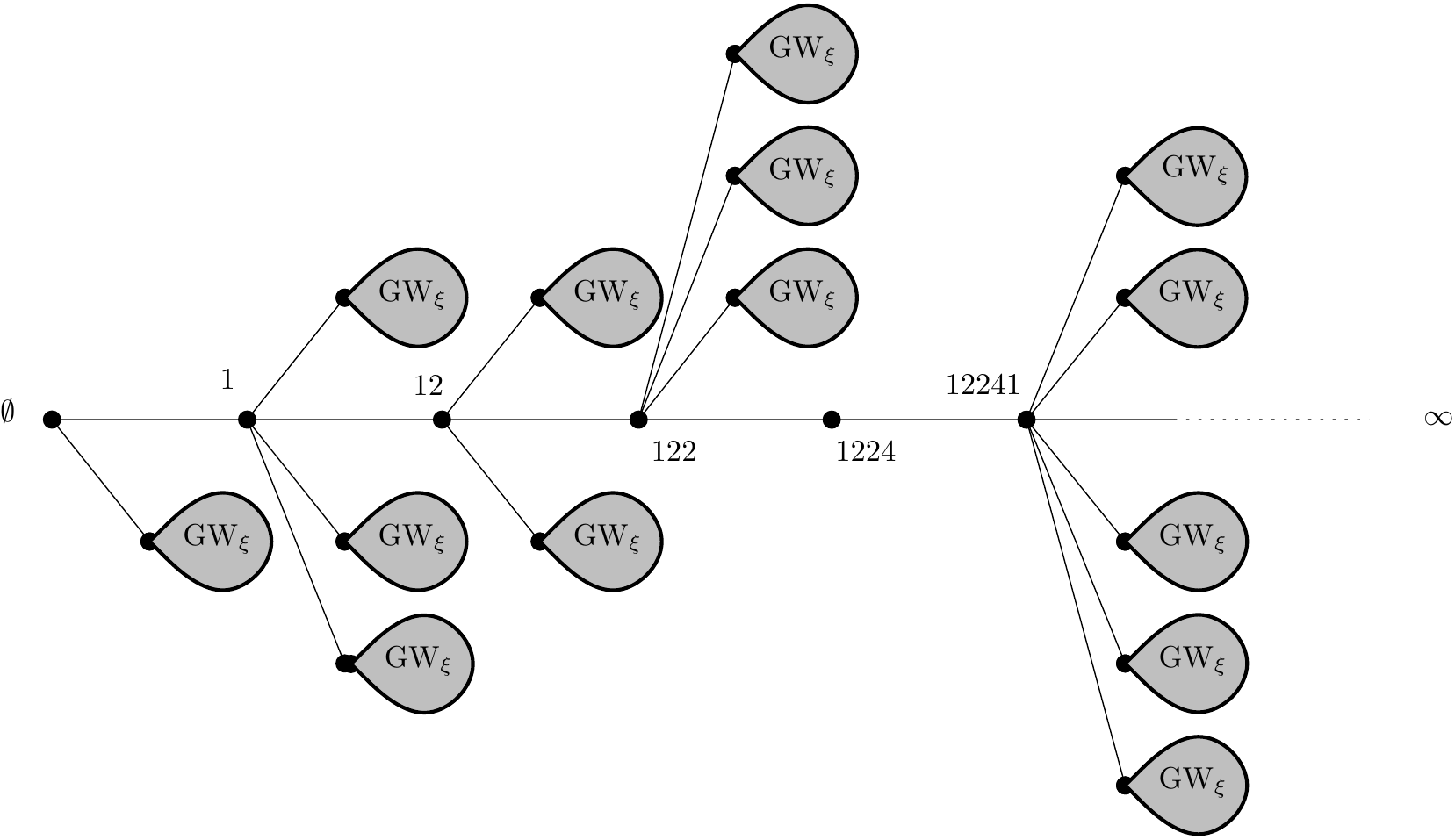}
  \caption{\label{critical} An illustration of $T_{\infty}$ under $ \widehat{\mathbb{P}}_\xi$.}
  \end{center}
  \end{figure}

The critical Galton-Watson tree conditioned to survive also
arises in other conditionings of Galton-Watson trees. Recall that $\t_{n}$ is a $ \mu$-Galton Watson tree conditioned on having $n$ leaves.

\begin{thm}\label{thm:cvlocale} 
For every $k \geq 0$, we have the convergence in distribution
  \begin{eqnarray*} \left[\t_{n}\right]_{k} & \xrightarrow[n\to\infty]{(d)}& \left[  \mathcal{T}_{\infty} \right]_{k},  \end{eqnarray*} where $ \mathcal{T}_{\infty}$ is the critical Galton-Watson tree with offspring distribution $\mu$ conditioned to survive.
  \end{thm}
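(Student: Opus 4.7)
The approach is to verify the pointwise convergence of the laws of $[\tau_n]_k$ and $[\mathcal{T}_\infty]_k$: I fix an arbitrary finite plane tree $\tau^*$ of height at most $k$ and show that $\mathbb{P}_\mu([\tau_n]_k=\tau^*)$ tends to $\widehat{\mathbb{P}}_\mu([\mathcal{T}_\infty]_k=\tau^*)$, computing both sides explicitly. Let $p$ be the number of vertices of $\tau^*$ at height exactly $k$ and $q$ the number of leaves of $\tau^*$ at height strictly less than $k$. On the event $\{[\tau]_k = \tau^*\}$, each such leaf of $\tau^*$ is automatically a leaf of $\tau$ (otherwise one of its children would still lie in $[\tau]_k$), while each vertex of $\tau^*$ at height $k$ is the root of an arbitrary subtree, and by the branching property of $\mathbb{P}_\mu$ these $p$ subtrees are independent $\mu$-Galton-Watson trees. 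This yields the decomposition
\[
\mathbb{P}_\mu\bigl([\tau]_k=\tau^*,\ \lambda(\tau)=n\bigr) \ =\ w(\tau^*)\cdot \mathbb{P}_{\mu,p}\bigl(\lambda(\mathfrak{f})=n-q\bigr),
\]
where $w(\tau^*):=\prod_{u\in\tau^*,\,|u|<k}\mu_{k_u(\tau^*)}$ is the probability that the first $k$ generations of $\tau$ coincide with $\tau^*$.

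Dividing by $\mathbb{P}_\mu(\lambda(\tau)=n)$ and invoking Lemma \ref{estimee} reduces the proof to establishing the local asymptotic equivalence
\[
\mathbb{P}_{\mu,p}\bigl(\lambda(\mathfrak{f})=m\bigr) \ \sim\ p\cdot \mathbb{P}_\mu\bigl(\lambda(\tau)=m\bigr)\qquad (m\to\infty).
\]
This is the main technical ingredient and the principal obstacle of the proof. Since Lemma \ref{estimee} tells us that $\mathbb{P}_\mu(\lambda(\tau)=m)$ is regularly varying of index $-3/2$, the statement is the local ``one-big-jump'' principle for i.i.d.\ sums with regularly varying probability mass function: decomposing the convolution $\sum_{m_1+\cdots+m_p=m}\prod_i \mathbb{P}_\mu(\lambda(\tau)=m_i)$ according to which index carries the largest summand and showing that the contribution of configurations with two or more large summands is negligible produces the factor $p$. (This local equivalence can alternatively be extracted from the computations in \cite{K:leaves}.) Combining these ingredients one obtains
\[
\mathbb{P}_\mu\bigl([\tau_n]_k=\tau^*\bigr) \ \xrightarrow[n\to\infty]{}\ p\cdot w(\tau^*).
\]

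It remains to recognize that $p\cdot w(\tau^*)=\widehat{\mathbb{P}}_\mu([\mathcal{T}_\infty]_k=\tau^*)$, which is a direct reading of Kesten's construction. The event $\{[\mathcal{T}_\infty]_k=\tau^*\}$ forces the infinite spine of $\mathcal{T}_\infty$ to pass through one of the $p$ vertices of $\tau^*$ at height $k$; conditional on this endpoint, the spine path $(\varnothing=w_0,w_1,\ldots,w_k)$ inside $\tau^*$ is determined. Each spine vertex $w_j$ contributes the factor
\[
\overline{\mu}_{k_{w_j}(\tau^*)}\cdot\frac{1}{k_{w_j}(\tau^*)}\ =\ \mu_{k_{w_j}(\tau^*)}
\]
(size-biased offspring distribution and uniform choice of successor on the spine), while each non-spine vertex $u$ of $\tau^*$ with $|u|<k$ contributes the factor $\mu_{k_u(\tau^*)}$ through its independent $\mu$-Galton-Watson subtree. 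The total weight for one spine path is therefore $w(\tau^*)$, and summing over the $p$ possible choices of spine endpoint at height $k$ yields $p\cdot w(\tau^*)$, matching the limit of the previous step and concluding the proof.
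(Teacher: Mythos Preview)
Your proposal is correct and follows essentially the same route as the paper: both reduce the convergence to the decomposition $\mathbb{P}_\mu([\tau]_k=\tau^*\mid\lambda(\tau)=n)=w(\tau^*)\,\mathbb{P}_{\mu,p}(\lambda(\mathfrak{f})=n-q)/\mathbb{P}_\mu(\lambda(\tau)=n)$ via the branching property, and then to the local one-big-jump estimate $\mathbb{P}_{\mu,p}(\lambda(\mathfrak{f})=m)\sim p\,\mathbb{P}_\mu(\lambda(\tau)=m)$, which the paper states and proves as a separate lemma (Lemma~\ref{ui}). The only cosmetic difference is that the paper quotes Kesten's identity $\widehat{\mathbb{P}}_\mu([\tau]_k=\tau^*)=L_k(\tau^*)\,\mathbb{P}_\mu([\tau]_k=\tau^*)$ directly, whereas you rederive it from the spine construction; the substance is the same.
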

  
\begin{rem} Theorem \ref{thm:cvlocale} is true when $ \mu$ is replaced by any finite variance offspring distribution $\nu$ such that $ \P_ { \nu} \left(  \lt =n\right) >0$ for every $n$ large enough.
\end{rem}

\proof This follows from another description of the law $ \widehat{
\mathbb{P}}_{\mu}$: If $ \tau_0$ is a plane tree and $k
\geq 0$ is an integer, we denote by $L_{k}(\tau_0)$ the number of
individuals of $\tau_0$ at height exactly $k$. Then we have from
\cite[Lemma 1.14]{Kes86}:
 \begin{eqnarray*}\widehat{\P}_\mu([\tau]_k=\tau_0)&=& L_k(\tau_0) \Pmu([\tau]_k=\tau_0).  \end{eqnarray*}
 Fix an integer $k \geq 1$ as well as a tree $\tau_0 \in \T$ of height
 $k$. In order to prove the theorem, it is thus sufficient to show
that  \begin{eqnarray*}\label{eq:cvlocale}\Pmu([\tau]_k=\tau_0 \, | \,
\lt=n) & \displaystyle \mathop{\longrightarrow}_{n \rightarrow
\infty} & L_k(\tau_0) \Pmu([\tau]_k=\tau_0). \end{eqnarray*} Denote by $q$ the number of leaves of $\tau_0$ that have a height strictly smaller than $k$. By the branching property of Galton-Watson trees, we have
$$ \Pmu([\tau]_k=\tau_0 \, | \, \lt=n)= \Pmu([\tau]_k=\tau_0)
\frac{\P_{\mu,L_k(\tau_0)}(\lambda(\bf)=n-q)}{\Pmu(\lt=n)},$$
where we recall that $ \mathbb{P}_{\mu,i}$ denotes the probability
distribution of a forest of $i$ independent Galton-Watson trees with law $ \mathbb{P}_{\mu}$. Since $q$ and $L_k(\tau_0)$ are fixed, it is thus
sufficient to show that for a fixed integer $i \geq 1$, as
$n\rightarrow \infty$
$$ \P_{\mu,i}(\lambda(\bf)=n) \quad\mathop{\sim}_{n \to \infty} \quad i \times
\Pmu(\lt=n).$$
This follows from the next lemma, which we state in a more general form than needed here in view of further applications.\endproof

\begin{lem} \label{ui}There exists $ \varepsilon>0$ such that if $(i_n)_ {n \geq 1}$ is a sequence of positive integers such that $i_n \leq n^ \varepsilon$ for every $n \geq 1$, we have  
 $$ \P_{\mu,i_{n}}(\lambda( \mathfrak{f})=n)  \quad\mathop{\sim}_{n \to \infty} \quad i_{n} \cdot 
\Pmu(\lt=n).$$

\end{lem}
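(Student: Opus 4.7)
The plan is to use singularity analysis of the generating function
$A(z) := \sum_{n \geq 1} \Pmu(\lt = n) z^n$. Since the total number of leaves of a forest of $i_n$ independent $\mu$-Galton-Watson trees equals the sum of the individual leaf counts, we have $\P_{\mu, i_n}(\lambda(\mathfrak{f}) = n) = [z^n] A(z)^{i_n}$, and the task reduces to analyzing $A(z)^{i_n}$ at its singularity $z = 1$ with error bounds uniform in $i_n$.

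First I would derive the singular expansion of $A$. Decomposing a $\mu$-GW tree at the root and using $\mu_1 = 0$ gives $A(z) = \mu_0 z + \sum_{k \geq 2} \mu_k A(z)^k$, equivalently $\Phi(A(z)) - A(z) = \mu_0 (1 - z)$ with $\Phi(s) := \sum_k \mu_k s^k$. Since $\mu$ is critical with finite positive variance $\sigma^2$, Taylor-expanding $\Phi$ at $1$ (where $\Phi(1) = \Phi'(1) = 1$) yields $\Phi(A(z)) - A(z) \sim (\sigma^2/2)(1-A(z))^2$, hence the Puiseux expansion
\[
A(z) = 1 - \alpha (1 - z)^{1/2} + O(1-z), \qquad \alpha := \sqrt{2\mu_0 / \sigma^2}, \qquad z \to 1.
\]
In our specific case one may even solve the resulting quadratic: with $c = 1 - 1/\sqrt{2}$,
\[
A(z) = \frac{1 + \mu_0 c z - \mu_0 c\sqrt{(1-z)(17 + 12\sqrt{2} - z)}}{4c},
\]
which shows that the only singularity of $A$ in $\{|z| \leq 1\}$ is a square-root branch point at $1$ and that $A$ extends analytically to a Delta-domain there on which the above expansion is uniform.

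Next I would apply the transfer theorem of singularity analysis on a Hankel contour $\gamma_n$ at distance of order $1/n$ from $z = 1$. On $\gamma_n$ one has $|1-z| \asymp 1/n$, so $|\alpha(1-z)^{1/2}| = O(n^{-1/2})$. Under the assumption $i_n \leq n^{\varepsilon}$ with $\varepsilon < 1/2$ we get $i_n|\alpha(1-z)^{1/2}| = o(1)$ uniformly on $\gamma_n$, hence $i_n \log A(z) = -i_n \alpha (1-z)^{1/2} + O(i_n/n)$ and
\[
A(z)^{i_n} = 1 - i_n \alpha (1-z)^{1/2} + R_n(z), \qquad |R_n(z)| = O\bigl(i_n^2 |1-z|\bigr).
\]
The transfer theorem (see \cite{FS09}) then gives $[z^n]\bigl(-i_n\alpha(1-z)^{1/2}\bigr) = i_n \cdot \alpha/(2\sqrt{\pi}) \cdot n^{-3/2}(1+O(1/n))$, which by Lemma \ref{estimee} equals $i_n \Pmu(\lt = n)(1+o(1))$, while $[z^n] R_n(z) = O(i_n^2 n^{-2}) = o(i_n n^{-3/2})$ as soon as $i_n = o(\sqrt{n})$. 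Any $\varepsilon \in (0, 1/2)$ thus gives the conclusion.

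The main technical obstacle is the uniform control of the expansion of $A(z)^{i_n}$ on $\gamma_n$: one must carefully track the $i_n$-dependence of the error terms in $\log A$ and in the subsequent exponentiation. The algebraic nature of $A$ and its purely square-root singular behaviour are what permits $i_n$ to grow up to $n^\varepsilon$ with $\varepsilon < 1/2$; this threshold is precisely the scale at which a single big summand starts to dominate $\lambda(\tau_1) + \cdots + \lambda(\tau_{i_n})$, yielding the factor $i_n$ in front of $\Pmu(\lt = n)$.
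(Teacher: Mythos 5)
Your approach is genuinely different from the paper's. The paper works directly with the convolution $\P_{\mu,i}(\lambda(\mathfrak{f}) = n) = \sum_{k_1 + \cdots + k_i = n}\prod_j p_{k_j}$ (where $p_k = \Pmu(\lt=k)$), splits this sum according to whether a second index exceeds a cutoff $A = n^{8\varepsilon}$, and controls the two pieces with nothing more than the local limit estimate of Lemma~\ref{estimee}; this gives the lemma for $\varepsilon \in (0,1/9)$. Your generating-function route, if carried through, gives the optimal threshold $\varepsilon < 1/2$ (matching the stable-law heuristic: $\sum_{j\le i_n}\lambda(\tau_j)$ is attracted to a $1/2$-stable law, so the one-big-jump regime holds precisely when $n \gg i_n^{2}$). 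The algebraicity of $A$ also hands you the Delta-domain and the singular exponent with essentially no work. So this is a stronger and arguably cleaner statement than what is proved in the paper.

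There is, however, a gap in your uniform error control. The bound $|R_n(z)| = O(i_n^2|1-z|)$ comes from Taylor-expanding $\exp(i_n\log A(z))$, which is legitimate only when $i_n|1-z|^{1/2}$ is $o(1)$. That holds on the inner arc $|1-z|\asymp 1/n$, but the Hankel contour also contains the rectilinear parts extending to $|1-z|\asymp 1$, where $i_n|1-z|^{1/2}$ is large and the expansion breaks down. In particular the assertion $[z^n]R_n(z) = O(i_n^2 n^{-2})$ does not follow from the standard transfer theorem: your $R_n$ depends on $n$, and the claimed $O$-bound is not uniform on a fixed Delta-domain. To repair this, note that on the contour $|A(z)| \le 1 - c|1-z|^{1/2}$ for some $c>0$ (the leading singular term $-\alpha(1-z)^{1/2}$ has strictly negative real part for $|\arg(1-z)|<\pi$, and the remainder is $O(|1-z|)$), hence $|A(z)^{i_n}| \le \exp(-c\,i_n|1-z|^{1/2})$, and so in the regime $i_n|1-z|^{1/2}\gtrsim1$ the crude bound $|R_n(z)|=O(i_n|1-z|^{1/2})$ holds. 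Splitting the contour integral at $|1-z|\asymp i_n^{-2}$ then gives $[z^n]R_n(z) = O(i_n^{2}n^{-2}) + o(i_n n^{-3/2}) = o(i_n n^{-3/2})$ whenever $i_n = o(\sqrt{n})$, which closes the argument and validates the stated range $\varepsilon \in (0,1/2)$.
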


\proof We show that the conclusion of the lemma holds for any $\varepsilon \in (0, 1/9)$.  To simplify notation, we set $p_{k}:=\mathbb{P}_{\mu}(\lambda(\tau)=k)$ for every integer $k \geq
1$ and write $i=i_n$ in the proof. By the definition of $ \mathbb{P}_{\mu,i}$, we have
 \begin{eqnarray*} \mathbb{P}_{\mu, i}(\lambda( \mathfrak{f}) = n) &=& \sum_{k_{1} + ... + k_{i} = n} \prod_{j=1}^{i} p_{k_{j}}.  \end{eqnarray*}
We will show that when $n$ is large, the main contribution in the
previous sum is obtained when the indices $k_{1}, ... , k_{i}$ are such that
only one of them is of order $n$ and the others are small in
comparison. Let $A \geq 1$. Firstly, notice
that at least one of the indices $k_{1}, ... , k_{i}$ is larger than
$n/i$. Secondly, let us evaluate the contribution of the sum when
$k_{1} \geq n/i$ and $k_{2}$ is larger than $A$. By Lemma \ref{estimee},
$$ \sum_{\begin{subarray}{c}k_{1}+ \cdots + k_{i}=n \\ k_{1} \geq n/i\\ k_{2} \geq  A \end{subarray}}
\prod_{j=1}^{i} p_{k_{j}} \quad \leq \quad \sup_{ k_{1} \geq n/i}p_{k_{1}}
 \cdot \left(\sum_{k_{2} \geq A}p_{k_{2}}\right) \cdot \prod_{j=3}^i \left(\sum_{k_{j}=1}^\infty p_{k_{j}}\right)
  \quad \leq \quad C (n/i)^{-3/2}A^{-1/2},$$
for some constant $C>0$ which is independent of $n$ and $i$. Hence, provided that $A<n ^ {1- \varepsilon}$, \begin{eqnarray} \left | \left(\sum_{k_{1} + \cdots + k_{i} = n} \prod_{j=1}^{i} p_{k_{j}}\right) -i \left(\sum_{1\leq k_1, ..., k_{i-1} \leq A} \hspace{-0.4cm} p_{n- \sum_{j}
k_{j}} \prod_{j=1 }^{i-1} p_{k_{j}}\right) \right | &\leq&   C
i^{7/2}n^{-3/2}A^{-1/2}. \label{teknik}
  \end{eqnarray}
We apply this to $A=A(n)=n^{8\varepsilon}$ in such a way
that the right-hand side of the above inequality is negligible
in comparison with $i p_{n}$. Then note that for $1\leq k_1, ..., k_{i-1}
\leq A$, we have
$$n-n^{9\varepsilon}\leq  n- \sum_{j=1}^{i-1} k_{j} \leq n.$$ Moreover, since $9\varepsilon<1$, Lemma
\ref{estimee}  gives   $p_j/p_n \rightarrow 1$
uniformly in $n-n^{9\varepsilon} \leq j \leq n$ as $n \rightarrow
\infty$. Thus, using  Lemma \ref{estimee} again
  $$i \left( 
 \sum_{1\leq k_1, ..., k_{i-1} \leq A} p_{n- \sum_{j}
k_{j}} \prod_{j=1 }^{i-1} p_{k_{j}}  \right) \sim i p_{n} \left(
\sum_{k=1}^{n^{8 \varepsilon}} p_{k}\right)^{i-1} \sim  \quad i
p_{n}\left(1-\frac{n^{-4
\varepsilon}}{\sqrt{\pi\sqrt{2}}}\right)^{i} \sim i p_{n},$$
which completes the proof of the lemma. 
\endproof

\subsection{Applications}

The ``local convergence'' given in Theorem \ref {thm:cvlocale} allows us to study ``local''
properties of large uniform dissections by reading them directly on
the critical Galton-Watson tree conditioned to survive. We will
focus our attention on the following two local properties of random
uniform dissections: Vertex degrees and face degrees. \bigskip

 Let us introduce some notation. Recall that $\mathcal{D}_n$
 stands for a uniformly distributed dissection of $P_{n+1}$. Denote by $\delta^{(n)}$
the degree of the face adjacent to the side $[1,e^{2 \textrm{i} \pi
/{(n+1)}}]$ in the random dissection $\mathcal{D}_n$ and by
$D^{(n)}$ the maximal degree of a face of $ \mathcal{D}_n$.
Similarly, denote by $\partial^{(n)}$ the number of diagonals
adjacent to the vertex corresponding to the complex number $1$ in $
\mathcal{D}_n$ and by $\Delta^{(n)}$ the maximal number of diagonals
adjacent to some vertex of $P_{n+1}$. Finally, for $b>0$, we write $ \log_b( \cdot )$ for $ \ln( \cdot )/ \ln(b)$.

We shall establish that $\delta^{(n)}$ and $\partial^{(n)}$ converge
in distribution, and read their limiting distributions on the random infinite tree $ \mathcal{T}_{\infty}$. We also provide
sharp concentration estimates on $D^{(n)}$ and $\Delta^{(n)}$,
confirming in particular a conjecture of \cite{BPS08}
concerning $ \Delta^{(n)}$.

 \subsubsection{Face degrees}


\begin{prop}
As $n$ goes to infinity, $\delta^{(n)}$
converges in distribution to the random variable $X$ with
distribution
$$\P(X=k)=(k-1) \mu_{k-1}=(k-1) \left( \frac{2-\sqrt{2}}{2}\right)^{k-2}, \qquad k \geq
3.$$\end{prop}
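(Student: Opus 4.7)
The plan is to read the face degree $\delta^{(n)}$ directly off the dual tree and then invoke the local convergence theorem (Theorem~\ref{thm:cvlocale}). From the construction of the duality map $\phi$, the face of $\mathcal{D}_n$ adjacent to the side $[1,e^{2\textrm{i}\pi/(n+1)}]$ is dual to the root $\varnothing$ of $\mathcal{T}_n$, and its sides are in bijection with the distinguished side itself together with the children of $\varnothing$ in $\mathcal{T}_n$ (a polygon side if the child is a leaf, a diagonal otherwise). Hence
\[
\delta^{(n)} \;=\; k_\varnothing(\mathcal{T}_n) + 1,
\]
and we are reduced to identifying the limit in distribution of $k_\varnothing(\mathcal{T}_n)$.

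Since the event $\{k_\varnothing(\tau) = m\}$ depends only on the first-generation truncation $[\tau]_1$, Theorem~\ref{thm:cvlocale} applied with $k=1$ gives
\[
k_\varnothing(\mathcal{T}_n) \;\xrightarrow[n\to\infty]{(d)}\; k_\varnothing(\mathcal{T}_\infty),
\]
where $\mathcal{T}_\infty$ has law $\widehat{\mathbb{P}}_\mu$. By the spinal description of $\widehat{\mathbb{P}}_\mu$ recalled in Section~\ref{cgwi}, the root of $\mathcal{T}_\infty$ has a number of children distributed as the variable $D_0$, that is, according to the size-biased law $\overline{\mu}$, so that $\mathbb{P}(k_\varnothing(\mathcal{T}_\infty)=k) = k\mu_k$ for every $k\ge 0$ (which is automatically supported on $\{2,3,\ldots\}$ since $\mu_1 = 0$).

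Putting the two pieces together, for every integer $m \geq 3$,
\[
\mathbb{P}\bigl(\delta^{(n)} = m\bigr) \;\longrightarrow\; (m-1)\,\mu_{m-1} \;=\; (m-1)\,\Bigl(\tfrac{2-\sqrt{2}}{2}\Bigr)^{m-2},
\]
which is exactly the claimed distribution of $X$. The argument should carry through with no substantial obstacle once the identity $\delta^{(n)} = k_\varnothing(\mathcal{T}_n)+1$ is observed: the limit law is then extracted directly from the size-biased offspring of the root on the spine of the Kesten tree $\mathcal{T}_\infty$.
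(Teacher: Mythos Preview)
Your proof is correct and follows essentially the same approach as the paper: observe that $\delta^{(n)}-1$ is the number of children of the root in the dual tree, apply the local convergence of Theorem~\ref{thm:cvlocale}, and read off the size-biased root degree from the spinal description of $\widehat{\mathbb{P}}_\mu$. The paper's proof is just a one-sentence summary of exactly these steps.
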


\begin{proof}This is an immediate consequence of Theorem \ref{thm:cvlocale}
and the construction of the critical Galton-Watson tree conditioned
to survive, after taking into account the fact that $\delta^{(n)}-1$ is the
number of children of $\varnothing$ in the dual tree of
$\mathcal{D}_n$.\end{proof}



\begin{prop}\label{thm:facedegrees}
Set $\beta=1/\mu_2=2+\sqrt{2}$. For every $c>0$, we
have
 \begin{eqnarray*}\P \left(\log_\beta(n)-c \log_\beta \log_\beta(n)\leq D^{(n)} \leq \log_\beta(n)+c \log_\beta
\log_\beta(n) \right)
& \xrightarrow[n\to\infty]{}& 0. \end{eqnarray*}
\end{prop}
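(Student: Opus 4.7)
I would first translate the statement into tree language via Proposition~\ref{prop:unif}: the dual tree $\mathcal{T}_n$ is a critical $\mu$-Galton--Watson tree conditioned on $\lambda(\tau)=n$, and since a face of degree $d$ corresponds to a vertex with $d-1$ children, $D^{(n)} = 1 + \max_{u \in \mathcal{T}_n} k_u$. The tail of $\mu$ is geometric, $\mu_j = \beta^{1-j}$ for $j \geq 2$, and the conditioned tree has $\Theta(n)$ vertices almost surely (by a law of large numbers for $\zeta(\mathcal{T}_n)/n$ on the event $\{\lambda(\tau)=n\}$).

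I would then decompose the complement of the stated event into its two tails and bound each separately. Let $N_k = \#\{u \in \mathcal{T}_n : k_u \geq k\}$. A first-moment computation using Proposition~\ref{prop:unif} together with Lemmas~\ref{estimee} and~\ref{ui} (to handle the $\lambda(\tau)=n$ conditioning by replacing a single tree with a forest of independent GW trees whose total leaf number is $n-O(1)$) produces $\E[N_k] \asymp n \beta^{-k}$. At $k = \log_\beta n + c \log_\beta \log_\beta n$ this is $\asymp (\log_\beta n)^{-c} \to 0$, so Markov yields $\P(D^{(n)} > \log_\beta n + c \log_\beta \log_\beta n) \to 0$. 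For the lower tail, the same expectation at $k = \log_\beta n - c \log_\beta \log_\beta n$ is $\asymp (\log_\beta n)^{c} \to \infty$, and a Paley--Zygmund argument, with the variance again estimated via Lemmas~\ref{estimee} and~\ref{ui} plus the conditioned branching property, gives $\P(N_k \geq 1) \to 1$ and hence $\P(D^{(n)} < \log_\beta n - c \log_\beta \log_\beta n) \to 0$.

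\textbf{The main obstacle.} The two tail estimates combine to give $\P\bigl(D^{(n)} \in [\log_\beta n - c \log_\beta \log_\beta n,\, \log_\beta n + c \log_\beta \log_\beta n]\bigr) \to 1$, which is the \emph{opposite} of the limit asserted in the statement. The hardest technical step within the plan is the second-moment bound, where the correlations between vertex degrees under the $\lambda(\tau)=n$ conditioning must be controlled: Lemma~\ref{ui} is exactly what one needs, since it shows that a forest with prescribed total number of leaves asymptotically factorizes into independent trees up to a linear factor $i_n$. Because the light-tailed offspring law $\mu$ (finite exponential moments, $\mu_k\asymp \beta^{-k}$) forces $|D^{(n)} - \log_\beta n|$ to be tight of order $O(1)$ in probability --- well within any $\log_\beta \log_\beta n$-window for large $n$ --- no probabilistic argument can produce the limit $0$ literally as written. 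I therefore interpret the proposition as a two-sided concentration result (limit $1$) and proceed by the plan above; the $0$ in the excerpt appears to be a typographical slip, either in the direction of the inequalities or in the value of the limit.
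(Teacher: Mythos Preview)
You are right that the displayed limit is a misprint: the surrounding text states that $D^{(n)}$ is ``strongly concentrated around $\log_\beta(n)+o(\log\log(n))$'', so the probability of the stated window should tend to $1$, not $0$. Your reduction $D^{(n)}-1=\max_{u\in\mathcal{T}_n}k_u$ is exactly the paper's first (and only substantive) step; after that the paper simply invokes \cite[Remark~7.3]{K:leaves} for the concentration of the maximal offspring in a leaf-conditioned critical Galton--Watson tree with geometric tails. The actual argument is therefore outsourced, and there is nothing further in the paper to compare your moment strategy against.

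Your first/second-moment plan is the natural self-contained substitute and is sound. The upper tail via Markov and $\E[N_k]=O(n\beta^{-k})$ is unproblematic; Lemma~\ref{ui} (or the exchangeability of Lukasiewicz increments under cyclic shift) gives the needed estimate under the leaf conditioning. The Paley--Zygmund step is where the real work lies: one needs $\E[N_k^2]\le(1+o(1))(\E[N_k])^2$, which amounts to controlling the correlation between $\{k_u\ge k\}$ and $\{k_v\ge k\}$ under the conditioning $\lambda(\tau)=n$. Lemma~\ref{ui} supplies the forest factorisation after splitting at $u$ and $v$, but to match leading constants you also need $\zeta(\mathcal{T}_n)/n\to 1/\mu_0$ in probability, which is proved in \cite{K:leaves} rather than in the present paper. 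With that import your argument goes through, and it has the virtue of being explicit where the paper defers to an external reference.
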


\begin{proof}
By construction of the dual tree $ \mathcal{T}_{n}$ of $
\mathcal{D}_{n}$, we have $D^{(n)}-1=  \max_{u \in \t_n} k_u$. Thus, by Proposition \ref{prop:unif}, for every
measurable function $F: \R_+ \rightarrow \R_+$ we have
$$\Es{F(D^{(n)}-1)}=\mathbb{E}_\mu\left[F\left(\max_{u \in \tau} k_u
\right) \, | \, \lt=n\right].$$ The result now follows from \cite[Remark 7.3]{K:leaves}.
\end{proof}

\subsubsection{Vertex degrees}

We are now interested in another graph-theoretical property of large
uniform dissections, namely vertex degrees. Since these vertex
degrees are read on the dual tree in a more complicated fashion than
face degrees, the arguments are slightly more involved.

Recall that $\partial^{(n)}$ stands for the number of diagonals
adjacent to the vertex corresponding to the complex number $1$ in
the uniform dissection $ \mathcal{D}_n$.

\begin{prop}\label{prop:somme2geom}
As $n$ goes to infinity, $\partial^{(n)}$ converges
in distribution to the sum of two independent geometric random variables of
parameter $1-\mu_0=\sqrt{2}-1$, $i.e.$ for any $k \geq 0$ we have
 \begin{eqnarray*}  \P(\partial ^{(n)} = k) & \xrightarrow[n\to\infty]{} & (k+1)\mu_0^2(1-\mu_0)^{k}.  \end{eqnarray*}
 \end{prop}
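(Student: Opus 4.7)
The strategy is to read $\partial^{(n)}$ off the dual tree $\mathcal{T}_n$ as the depth of a specific boundary leaf, pass to the local limit via Theorem~\ref{thm:cvlocale}, and compute the limiting law directly from Kesten's size-biased construction of $\mathcal{T}_\infty$ recalled in Section~\ref{cgwi}. The first step is a combinatorial identity: if $f_0, f_1, \ldots, f_k$ are the faces of $\mathcal{D}_n$ incident to vertex~$1$, listed in the cyclic order induced by the planar embedding starting from the root side, then any two consecutive faces share exactly one diagonal through vertex~$1$, so $\partial^{(n)} = k$. In $\mathcal{T}_n$, $f_0$ is the root, each $f_{i+1}$ is the extremal (say leftmost) planar child of $f_i$, and this extremal root-to-leaf branch terminates at the leaf $\ell^{(n)}$ dual to the side $[e^{-2\textrm{i}\pi/(n+1)}, 1]$. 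Hence $\partial^{(n)} = |\ell^{(n)}| - 1$, where $|\ell^{(n)}|$ denotes the depth of $\ell^{(n)}$ in $\mathcal{T}_n$; the small case $n=3$ with a single diagonal confirms the offset.

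Next, for each fixed $k \geq 0$ the event $\{|\ell^{(n)}| \leq k\}$ depends only on $[\mathcal{T}_n]_k$, as it amounts to the extremal branch reaching a leaf within the first $k$ generations. Theorem~\ref{thm:cvlocale} therefore yields $\P{|\ell^{(n)}| \leq k} \to \P{|\ell^{(\infty)}| \leq k}$ as $n \to \infty$, where $\ell^{(\infty)}$ denotes the extremal leaf of $\mathcal{T}_\infty$. Once $|\ell^{(\infty)}|$ is shown to be almost surely finite (which follows from the explicit computation below), this gives $\partial^{(n)} \xrightarrow{(d)} |\ell^{(\infty)}| - 1$.

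It remains to compute the distribution of $|\ell^{(\infty)}|$ using the spinal decomposition of $\mathcal{T}_\infty$. Walking down the extremal branch from the root, the walk stays on the spine at level $k$ iff the spine-child index $U_{k+1}$ equals the extremal one, which has conditional probability $1/D_k$ given the size-biased degree $D_k$. Averaging,
\[
\P{\text{stay on spine at level }k} \;=\; \mathbb{E}\!\left[1/D_k\right] \;=\; \sum_{d \geq 1} \frac{1}{d}\cdot d\,\mu_d \;=\; 1-\mu_0,
\]
independently across levels. Hence the first level $T$ at which the walk leaves the spine satisfies $\P{T = t} = (1-\mu_0)^{t-1}\mu_0$, so $T-1$ is geometric of parameter $1-\mu_0$ in the paper's convention. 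After leaving the spine, the walk continues in an independent $\mu$-Galton-Watson subtree $\tau'$; conditioning on whether the root of $\tau'$ is a leaf gives $\P{N = 0} = \mu_0$ and, by a direct one-step recursion, $\P{N = n} = (1-\mu_0)^n \mu_0$, so the number $N$ of additional steps until the walk reaches a leaf is also geometric of parameter $1-\mu_0$ and is independent of $T$. Since $|\ell^{(\infty)}| = T + N$, we conclude that $\partial^{(n)}$ converges in distribution to $(T-1) + N$, the sum of two independent geometric variables of parameter $1-\mu_0$, which is the claim. The main obstacle is step~1, namely carefully matching the chain of faces around vertex~$1$ to the extremal root-to-leaf branch of $\mathcal{T}_n$ under the rooting and planar orientation fixed in Section~1.1; the rest is a routine application of the tools just recalled.
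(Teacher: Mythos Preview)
Your proof is correct and follows essentially the same approach as the paper: both identify $\partial^{(n)}$ with $\ell(\mathcal{T}_n)-1$ (the length of the left-most path minus one), invoke Theorem~\ref{thm:cvlocale} to pass to $\mathcal{T}_\infty$, and then decompose the left-most path of $\mathcal{T}_\infty$ into the on-spine segment (your $T-1$, the paper's $\ell_1-1$) and the off-spine Galton--Watson segment (your $N$, the paper's $\ell_2$), each shown to be geometric of parameter $1-\mu_0$ via the identity $\mathbb{E}[1/D_k]=\sum_{d\geq 1}\mu_d=1-\mu_0$.
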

 See also \cite{BPS08} for a closely related result.

\proof If $\tau$ is a plane tree, we introduce  the length $\ell(\tau)$ of the left-most path in $\tau$ starting at $\varnothing$ (that is following left-most children
until we reach a leaf),
$$ \ell(\tau) = \max \{ i \geq 0 : 1_{i} \in \tau \}, \quad \mbox{ where }1_{i} = {1...1} \quad (i \mbox{ times}) \textrm{ with } 1_0=\varnothing.$$
Using the bijection between a dissection and its dual tree, it is
easy to see that the number of diagonals adjacent to the vertex $1$
of the random dissection $ \mathcal{D}_{n}$ is
 $\ell( \mathcal{T}_{n})-1$. By Theorem
\ref{thm:cvlocale}, for every $k \geq 1$, $\left[\t_{n}\right]_{k}
\to \left[  \mathcal{T}_{\infty} \right]_{k}$ as $n\to\infty$. It
follows that $\partial^{(n)} = \ell( \mathcal{T}_{n})-1$ converges
in distribution towards $\ell( \mathcal{T}_{\infty})-1$. Let us
identify the distribution of this variable using the first
description of the law $ \widehat{ \mathbb{P}}_{\mu}$: The length
$\ell( \mathcal{T}_{\infty})-1$ can be decomposed into $$\ell(
\mathcal{T}_{\infty})-1 = (\ell_{1}-1) + \ell_{2},$$ where
$\ell_{1}$ is the smallest integer $i \geq 1$ such that the element
$1_{i} = 1...1$ ($i$ times) is not on the \emph{spine} of $
\mathcal{T}_{\infty}$ and $\ell_{2}= \ell(
\mathcal{T}_{\infty})-(\ell_{1}-1)$ is the length of the left-most
path in the critical $\mu$-Galton-Watson tree grafted on the left of
$1_{i}$. By the description in Section \ref{cgwi}, the two
variables $\ell_{1}-1$ and $\ell_{2}$ are independent. It is straightforward that $\ell_2$ is distributed according to a geometric variable of parameter $\sqrt{2}-1$. Let us now turn to $\ell_1 -1$. Recall the notation introduced in Section \ref{cgwi}. For $k \geq 0$, we have: 
 \begin{eqnarray*} \mathbb{P}(\ell_1  \geq k+1) &= &  \mathbb{P}(U_1=1, U_2=1, \ldots , U_k=1)\\
 &=& \prod_{i=0}^{k-1} \left(\sum_{j=1}^\infty \mathbb{P}(D_i = j) \mathbb{P}(U_{i+1} = 1 \mid D_{i}=j) \right)\\
 &=& \left(\sum_{j=2}^{\infty} (1-2^{-1/2})^{j-1}\right)^{k} = (1-\mu_0)^k. \end{eqnarray*}
 We thus see that $\ell_1-1$ is also geometric with parameter $ \sqrt {2}-1$ and the desired result follows.
\endproof

Recall that $\Delta^{(n)}$ stands for the maximal number of
diagonals adjacent to a vertex of $P_{n+1}$. The following
theorem proves a conjecture of \cite{BPS08}.

\begin{thm}\label{thm:vertexdegrees}Set $b=1/(1-\mu_0)=\sqrt{2}+1$. For every $c>0$, we have
 \begin{eqnarray*}\P \left(\Delta^{(n)} \geq \log_b(n)+(1+c) \log_b \log_b(n) \right)
& \xrightarrow[n\to\infty]{}& 0. \end{eqnarray*}
\end{thm}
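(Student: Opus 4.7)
The plan is to combine the rotational symmetry of $\mathcal{D}_n$ with a spine decomposition of the dual tree along its left-most path, reducing everything to a tail estimate that Lemma \ref{ui} is tailor-made to supply. Since the law of $\mathcal{D}_n$ is invariant under the rotations of $P_{n+1}$, the number of diagonals incident to each of the $n+1$ vertices has the same distribution as $\partial^{(n)}$, so a union bound gives
\[
\P(\Delta^{(n)} \geq k) \;\leq\; (n+1)\,\P(\partial^{(n)} \geq k).
\]
Recall from the proof of Proposition \ref{prop:somme2geom} that $\partial^{(n)} = \ell(\mathcal{T}_n) - 1$, where $\ell(\tau)$ is the length of the left-most path in $\tau$, and that $\mathcal{T}_n$ has law $\Pmu(\,\cdot\,|\,\lt = n)$. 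The whole theorem therefore reduces to establishing the tail bound
\[
\Pmu(\ell(\tau) \geq m \,|\, \lt = n) \;\leq\; C\, m\, b^{-m}
\]
for $m$ of order $\log n$: then taking $m = k_n + 1$ with $k_n = \log_b n + (1+c)\log_b\log_b n$ gives $\P(\Delta^{(n)} \geq k_n) = O((\log_b n)^{-c}) \to 0$.

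To prove the tail bound, I would decompose a $\mu$-Galton--Watson tree $\tau$ with $\ell(\tau) \geq m$ along its left-most vertices $1_0, 1_1, \ldots, 1_m$. Setting $K_i := k_{1_{i-1}}(\tau)$, the branching property shows that the $K_i$ are i.i.d. with law $\mu$ conditioned on being $\geq 1$, and that (given the $K_i$'s) the off-spine subtrees together with the subtree rooted at $1_m$ form an independent forest of $S_m := 1 + \sum_{i=1}^m (K_i - 1)$ i.i.d.\,$\mu$-Galton--Watson trees whose total number of leaves is $\lt$. This yields
\[
\Pmu(\ell(\tau) \geq m,\, \lt = n) \;=\; \mathbb{E}\Bigl[\mathbf{1}_{\{K_1,\ldots,K_m \geq 1\}}\, \mathbb{P}_{\mu, S_m}(\lambda(\mathfrak{f}) = n)\Bigr],
\]
where on the right-hand side the $K_i$'s are i.i.d.\,$\mu$-distributed and $\mathfrak{f}$ is a forest of $S_m$ i.i.d.\,$\mu$-Galton--Watson trees.

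To handle this expression, I would invoke Lemma \ref{ui} in its uniform form $\mathbb{P}_{\mu, s}(\lambda(\mathfrak{f}) = n) \leq C s\, \Pmu(\lt = n)$ for $s \leq n^\varepsilon$, and truncate $S_m$ at $n^\varepsilon$. On the complementary event, the crude Chernoff estimate $\P(S_m > n^\varepsilon \,|\, K_i \geq 1) \leq e^{-c' n^\varepsilon}$ (available since $\mu_j = c^{j-1}$ for $j \geq 2$ yields finite exponential moments) shows the tail contribution is super-polynomially small, hence negligible compared with $\Pmu(\lt = n) \sim C'' n^{-3/2}$ from Lemma \ref{estimee}. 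The dominant term is bounded by $C\, \Pmu(\lt = n)\, \mathbb{E}[S_m \mathbf{1}_{\{K_1,\ldots,K_m \geq 1\}}]$, and a direct computation using $\mathbb{E}[K_i \,|\, K_i \geq 1] = 1/(1-\mu_0) = b$ and $\Pmu(\ell(\tau) \geq m) = (1-\mu_0)^m$ gives $\mathbb{E}[S_m \mathbf{1}_{\{K_1,\ldots,K_m \geq 1\}}] = (1-\mu_0)^m(1 + m\sqrt{2}) = O(m\, b^{-m})$. Dividing through by $\Pmu(\lt = n)$ produces the announced tail bound and finishes the theorem. The main technical point is the passage from the pointwise asymptotic in Lemma \ref{ui} to a uniform-in-$s$ upper bound valid for all $s \leq n^\varepsilon$ simultaneously; this should follow by a straightforward diagonal argument together with inspection of the convolution sum in the proof of Lemma \ref{ui}, but is the one subtlety that requires care.
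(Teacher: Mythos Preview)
Your proposal is correct and follows essentially the same route as the paper: the union bound via rotational invariance, the identification $\partial^{(n)}=\ell(\mathcal{T}_n)-1$, a spine decomposition along the left-most path, a truncation of the number of off-spine subtrees at level $n^\varepsilon$, and an application of Lemma~\ref{ui} on the truncated part. Your $S_m$ is the paper's $\theta(\tau)+1$ (you work on $\{\ell\geq m\}$ and include the subtree at $1_m$, while the paper fixes $\ell=p$ and counts only the $\theta(\tau)$ off-spine trees plus one isolated leaf), but this is purely cosmetic. The subtlety you flag---passing from the asymptotic in Lemma~\ref{ui} to a uniform-in-$s$ bound for $s\leq n^\varepsilon$---is exactly the point the paper also uses without comment and then acknowledges in the Remark following the proof; as you say, it follows from a standard diagonal/subsequence argument.
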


\proof Let $p,n \geq 1$ be integers. By rotational invariance, the degrees of  the vertices of $ \mathcal{D}_n$ are identically
distributed random variables. It follows that
 \begin{eqnarray*}\Pr{\Delta^{(n)} \geq p}
 &\leq& (n+1)\P\left(
\partial^{(n)} \geq p\right).  \end{eqnarray*}
We have already noticed that the number of
diagonals adjacent to the vertex $1$ of the random dissection $
\mathcal{D}_{n}$ corresponds to $\ell( \mathcal{T}_{n})-1$, where $\ell( \t_n)$ denotes the length of the left-most path in
$ \mathcal{T}_{n}$ starting at $\varnothing$. 
Thus, by Proposition \ref{prop:unif},
 \begin{eqnarray*}(n+1)\P\left(
\partial^{(n)}_0 \geq p\right)&=& (n+1) \mathbb{P}_{\mu} \left( \ell ( \tau) \geq p+1 \mid \lambda(\tau)=n \right) \label{tobeevaluate}
 \end{eqnarray*}
We now estimate the right-hand side and show
that it tends to $0$ when $n \rightarrow \infty$ and $p=p_{n} =\log_b(n)+(1+c) \log_b
\log_b(n)$ for $c>0$. If $\ell(\tau)=p$, define $\theta(\tau) =
k_{\varnothing} + k_{1} + k_{1_{2}} + ... + k_{1_{p-1}}- p$ ($\theta (\tau)$ can be interpreted as the total number of those children of vertices in the left-most path that are not in that path). Note that under $\Pmu$, $\ell(\tau)$ is distributed according
to a geometric random variable of parameter $\sqrt{2}-1$. In
particular, for $\a=|4/\log(1-\mu_0)|$,
\begin{equation}\label{eq:0}n^3\Prmu{\ell(\tau) \geq \a\log(n)} \quad \mathop{\longrightarrow}_{n \rightarrow \infty}\quad 0.\end{equation}
Note also that for positive integers $j,k$ we have
$\Prmu{\theta(\tau)=j  \mid \ell(\tau)=k}= \P(Y^{*k}=j)$, where
$Y^{*k}$ is distributed as the sum of $n$ independent random
variables distributed according to $ \gamma(j)= \mu_{j+1}/ \mu([1, \infty])$.

Choose $\varepsilon>0$ such that the conclusion of Lemma \ref{ui} holds. We first claim that if $A_{n} = \{ \theta(\tau) \leq n^\varepsilon\}$, then $
\mathbb{P}_{\mu} \left( A_{n} \right) \geq 1 - n^{-3}$ for $n$ large
enough. To this end, write
\begin{eqnarray*} n^3 \mathbb{P}_{\mu} \left( \theta(\tau) > n^\varepsilon \right)
 &\leq&  n^3 \Prmu{\ell(\tau)> \a \log(n)} + n^3 \sum_{j=1}^{\lfloor
\a\log(n)\rfloor} \Prmu{\theta(\tau) > n^\varepsilon \, \| \,
\ell(\tau)=j} \Prmu{\ell(\tau)=j}\\
&\leq& n^3 \Prmu{\ell(\tau)> \a \log(n)}  +  n^3 \a \log(n)
\Prmu{\theta(\tau)
> n^\varepsilon \, \| \, \ell(\tau)=\lfloor \a \log(n)\rfloor}.
\end{eqnarray*}
The first term tends to zero by \eqref{eq:0}, and the second one as
well by the previous description of the law of $\theta(\tau)$ under
the conditional probability distribution $\Prmu{ \, \cdot  \mid
\ell(\tau)=k}$ and a standard large deviation inequality. Thus our claim holds at it follows that, for $n$ large enough, \begin{eqnarray} \label{zob1} (n+1) \mathbb{P}_{\mu}(A_{n}^c \mid \lambda(\tau)=n)  \leq
(n+1)n^{-3}/ \mathbb{P}_{\mu}(\lambda(\tau)=n) & \xrightarrow[n\to\infty]{}& 0  \end{eqnarray} 
by Lemma \ref {estimee}. We now consider the event $A_{n}$. We have
\begin{eqnarray*}  && \Prmuln{  \{\ell(\tau) = p \} \cap A_{n}} \\
&& \qquad \qquad \qquad \leq \qquad  \mu_0 \sum_{\begin{subarray}{c} r_0, ..., r_{p-1} \geq 0 \\
\sum r_{j} \leq n^\varepsilon \end{subarray}} \mu_{r_0+1} \cdots
\mu_{r_{p-1}+1} \frac{\P_{\mu,r_0+r_1+\cdots+r_{p-1}}\left( \lt = n-1
\right)}{\Pmu(\lt=n)}.
 \end{eqnarray*}
 We can then apply Lemma \ref{ui} to get that the quotient in the last display is bounded above by
 some constant $C_{2}$ times $\theta(\tau)=r_{0} + ... + r_{p}$, so that
  \begin{eqnarray*}
\Prmuln{\ell(\tau) = p, \theta(\tau) \leq n^\varepsilon }&\leq &
C_{2} \mu_0 \sum_{\begin{subarray}{c} r_0, \ldots, r_{p-1} \geq 0
\end{subarray}} \mu_{r_0+1} \cdots \mu_{r_{p-1}+1}
(r_{0} + \ldots + r_{p-1}) \\
&=& C_{2} p \mu_0^2(1-\mu_{0})^ {p-1},
\end{eqnarray*}
where we have successively calculated these sums by using $$\sum_{k=1}^\infty \mu_{k+1}=1-\mu_0,\qquad\sum_{k=1}^\infty k \mu_{k+1}=\mu_0.$$
Consequently, setting $p_n=\log_b(n)+(1+c) \log_b \log_b(n)$ we
deduce from the above estimates that
 \begin{eqnarray*} (n+1) \mathbb{P}_{\mu} \left( \ell ( \tau) \geq p_n,  \theta(\tau) \leq n^\varepsilon \mid \lambda(\tau)=n \right) & \xrightarrow[n\to\infty]{}& 0,  \end{eqnarray*} which together with \eqref{zob1} completes the proof
of the theorem.
\endproof

\begin{rem}It is possible to simplify the proof of the preceding theorem by
using the fact that there exists $C>0$ such that for every integers
$i,n \geq 1$ $$\P_{\mu,i}(\lambda( \mathfrak{f})=n) \leq  C i \times
\Pmu(\lt=n).$$ The proof of this fact easily follows from results
contained in \cite{K:leaves}. However, we have preferred to prove a
weaker form of this inequality by elementary arguments without
relying on more involved results.
\end{rem}

 In \cite{BPS08}, it is shown that for every $c>0$
 \begin{eqnarray*}\P \left(\Delta^{(n)} \leq \log_b(n)-(2+c) \log_b \log_b(n) \right)
&\xrightarrow[n\to\infty]{} & 0. \end{eqnarray*} Using the
connection between uniform dissections and Galton-Watson
trees conditioned on their number of leaves, it is possible to
refine the above lower bound and to replace $(2+c)$ by $c$. However,
we believe that the optimal concentration result is given by the
following conjecture:

\begin{conj} For every $c>0$ we have
 \begin{eqnarray*}\P\left( \left|\Delta^{(n)}- (\log_b(n)+ \log_b \log_b(n))\right| > c \log_b\log_b(n)\right)
&\xrightarrow[n\to\infty]{} & 0.\end{eqnarray*}
\end{conj}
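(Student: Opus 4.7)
The upper tail $\Pr{\Delta^{(n)}>\log_b(n)+(1+c)\log_b\log_b(n)}\to 0$ is exactly Theorem~\ref{thm:vertexdegrees}, so the task reduces to establishing the matching lower tail $\Pr{\Delta^{(n)}<\log_b(n)+(1-c)\log_b\log_b(n)}\to 0$. The plan is to run a second moment method on the number of high-degree vertices. Set $p_n=\lfloor\log_b(n)+(1-c)\log_b\log_b(n)\rfloor$ and let $N_n$ denote the number of vertices of $P_{n+1}$ whose degree in $\mathcal{D}_n$ is at least $p_n$; it suffices to show $\Pr{N_n=0}\to 0$ for each $c\in(0,1)$. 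Note that a first moment computation alone can never suffice, since the threshold at which $\Es{N_n}$ transitions from $\infty$ to $0$ is precisely $\log_b n+\log_b\log_b n$, matching the centering in the conjecture.

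\emph{First moment.} By rotational invariance $\Es{N_n}=(n+1)\Pr{\partial^{(n)}\geq p_n}$. Replacing the one-sided use of Lemma~\ref{ui} in the proof of Theorem~\ref{thm:vertexdegrees} by the full asymptotic equivalence (which Lemma~\ref{ui} in fact provides) yields, uniformly in $p$ over the range $\log_b n + O(\log_b\log_b n)$, the sharp estimate $\Prmuln{\ell(\tau)=p}\sim\mu_0^2\,p\,(1-\mu_0)^{p-1}$. Summing the geometric-type tail gives $\Pr{\partial^{(n)}\geq p_n}\sim\mu_0\,p_n\,b^{-p_n}$, and plugging in $p_n$ yields $\Es{N_n}\sim C\,(\log_b n)^c\to\infty$.

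\emph{Second moment.} Write $\Es{N_n^2}=(n+1)\sum_{k=0}^{n}q_n(k)$, where $q_n(k)=\Pr{\deg(v_0)\geq p_n,\ \deg(v_k)\geq p_n}$ and $v_0,\ldots,v_n$ are the vertices of $P_{n+1}$. Fix a slowly growing cut-off $K=K(n)\to\infty$ with $K=o((\log_b n)^c)$. For $k$ in the near-diagonal range $k<K$ or $k>n-K$, the crude bound $q_n(k)\leq \Pr{\deg(v_0)\geq p_n}$ contributes at most $O(K\,\Es{N_n})=o(\Es{N_n}^2)$. For $k$ in the bulk range $K\leq k\leq n-K$, the goal is to prove the factorization $q_n(k)\leq(1+o(1))\Pr{\deg(v_0)\geq p_n}^2$ by decomposing the dual tree $\mathcal{T}_n$ along the two corners associated with $v_0$ and $v_k$ into two sub-forests $F_1,F_2$ corresponding to the two arcs of the polygon. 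Conditionally on the number of leaves lying on each side, $F_1$ and $F_2$ are independent $\mu$-Galton--Watson forests conditioned by their respective leaf counts, and, up to an exponentially negligible error, $\deg(v_0)$ is encoded by a left-most-type path in $F_1$ while $\deg(v_k)$ is encoded by a left-most-type path in $F_2$. A two-forest analog of the computation in the proof of Theorem~\ref{thm:vertexdegrees} (applying Lemma~\ref{ui} to each sub-forest simultaneously) then delivers the announced factorization, and summation over $k\in[K,n-K]$ yields the bound $\Es{N_n^2}\leq(1+o(1))\Es{N_n}^2$.

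\emph{Main obstacle and conclusion.} The delicate step is the bulk factorization: although it is intuitively clear that the two degree-encoding paths live in essentially disjoint regions of $\mathcal{T}_n$, one must carefully track how the global conditioning $\lambda(\tau)=n$ couples the sizes of $F_1$ and $F_2$, and verify that atypical leaf-splits between the two sub-forests contribute negligibly to $q_n(k)$. A secondary technical point is the precise combinatorial translation of the joint event $\{\deg(v_0)\geq p_n,\deg(v_k)\geq p_n\}$ into the split-tree framework, in particular to handle the configurations where a single face of $\mathcal{D}_n$ is incident to both $v_0$ and $v_k$. Once these ingredients are in place, Chebyshev's inequality gives $\Pr{N_n=0}\leq\mathrm{Var}(N_n)/\Es{N_n}^2\to 0$, and combined with Theorem~\ref{thm:vertexdegrees} this proves the conjecture.
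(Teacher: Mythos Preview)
The statement you are attempting to prove is labeled \emph{Conjecture} in the paper and is \emph{not} proved there. The authors explicitly write that ``If the degrees of vertices in $\mathcal{D}_n$ were independent, this concentration result would hold. However, the difficulty comes from the fact that this independence property does not exactly hold,'' and they note that even in the special case of triangulations the result was obtained only via generating-function methods. So there is no ``paper's own proof'' to compare against; the question is whether your argument actually closes the gap.

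Your first-moment computation is fine, and a second-moment method is the natural line of attack. But the heart of the matter---what you call the ``bulk factorization'' $q_n(k)\leq(1+o(1))\Pr{\deg(v_0)\geq p_n}^2$---is not proved in your proposal; you only describe what would need to be shown and label it ``the delicate step.'' That factorization is precisely the approximate-independence statement the authors identify as the obstacle to upgrading the conjecture to a theorem. In addition, the decomposition of $\mathcal{T}_n$ into ``two sub-forests $F_1,F_2$ corresponding to the two arcs of the polygon'' is not well defined as stated: cutting $P_{n+1}$ at $v_0$ and $v_k$ does not induce a splitting of the dual tree unless the diagonal $[v_0,v_k]$ happens to belong to $\mathcal{D}_n$. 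In $\mathcal{T}_n$, the quantity $\deg(v_0)+1$ is the length of the left-most path from the root, while $\deg(v_k)+1$ is the length of the geodesic between the $k$-th and $(k+1)$-th leaves; these two paths both pass through (or near) the root and can share an initial segment, so the claimed encoding of $\deg(v_0)$ and $\deg(v_k)$ by paths in \emph{disjoint} sub-forests requires a genuine argument, not just a remark. As written, then, this is a plausible outline rather than a proof, and the missing ingredient is exactly the one the paper singles out as open.
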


If the degrees of vertices in $ \mathcal{D}_n$ were independent, this
concentration result would hold. However, the difficulty comes from
the fact that this independence property does not exactly hold. Let us mention that
this conjecture (for a different value of $b$) has been proved in
the particular case of triangulations using generating function
methods \cite{GW00}.

\bigskip It is worth pointing out that although $\delta^{n}$ and
$\partial^{(n)}$ have a similar limiting distribution (roughly
speaking a size-biased geometric distribution), the maximal degree
of a face and the maximal degree of a vertex in a random uniform
dissection possess a different concentration
behavior: $D^{(n)}$ is strongly concentrated around $\log_\b(n) +
o(\log\log(n))$, whereas $\Delta^{(n)}$ is (conjecturally) strongly concentrated
around $\log_b(n) + \log_b\log_b(n) + o(\log\log(n))$. This comes from the heuristic observation that a ``typical'' vertex has a limiting distribution which is given by a sized-biased geometric distribution, whereas   a ``typical'' face of $ \mathcal{D}_n$  has a limiting distribution which is a geometric distribution (which is not size-biased). Let us give some details.

 We start with the vertex degree. By Proposition \ref{prop:somme2geom} and by rotational invariance, a ``typical'' vertex has a limiting distribution which is given by a sized-biased geometric distribution. This is why $\Delta^{(n)}$ should have the same concentration behavior as $n$ independent random variables distributed as size-biased geometric laws, that is, $\Delta^{(n)}$ should be concentrated around $\log_{b}(n) + \log_{b}\log_{b}(n)+o ( \log \log(n))$.

The situation is however different is the case of face degrees. Choosing the face adjacent to $[1,e^{2 \mathrm{i}\pi /(n+1)}]$ introduces a size-biasing in the distribution of a "typical" face (indeed, the face containing a given side of $P_n$ is not a typical face but a size-biased one; a typical face would be a face chosen "uniformly" among all faces of the dissection). In other words,  a
typical face of $ \mathcal{D}_n$ follows a geometric distribution, but
$\delta^{(n)}$ is a size-biased distribution of a
typical face of $ \mathcal{D}_n$.  This is why
$D^{(n)}$ follows the same concentration behavior as $n$ independent
geometric variables (which are not size-biaised), and hence explains why $D^{(n)}$ is concentrated around $\log_{\beta}(n)+ o( \log\log n)$.

\begin {tabular}{l p{3mm} l}
 D\'epartement de Math\'ematiques et Applications,  && Laboratoire de mathématiques, \\
 \'Ecole Normale Sup\'erieure. &&UMR 8628 CNRS.\\
45 rue d'Ulm && Université Paris-Sud\\
 
 75230 Paris cedex 05, France && 91405 ORSAY Cedex, France
\end {tabular}

\medbreak 
\noindent nicolas.curien@ens.fr\\
\noindent igor.kortchemski@normalesup.org


\begin{thebibliography}{99}

\bibitem {Aldous2}
D. Aldous. The continuum random tree. II. An overview. \emph{In Stochastic Analysis (Durham, 1990). London Math. Soc. Lecture Note Ser.} \textbf{167} 23--70. Cambridge Univ. Press. (1991).

\bibitem{Ald94a}
D.~Aldous.
\newblock Recursive self-similarity for random trees, random triangulations and
  brownian excursion.
\newblock {\em Ann. Probab.}, \textbf{22(2)} : 527-545 (1994).

\bibitem{Ald94b}
D.~Aldous.
\newblock Triangulating the circle, at random.
\newblock {\em Amer. Math. Monthly}, \textbf{101(3)}: 223-233 (1994).

\bibitem{BPS08}
N. Bernasconi, K. Panagiotou, A. Steger, On properties of random dissections and triangulations, \emph{Combinatorica} \textbf{30(6)}: 627-654 (2010).


\bibitem{Bill}
P. Billingsley, \emph{Convergence of probability measures}, Second
Edition, Wiley Series in Probability and Statistics: Probability and
Statistics. John Wiley and Sons, Inc., New York (1999).

\bibitem{CLG10}
N.~Curien and J.-F. Le~Gall.
\newblock Random recursive triangulations of the disk \textit{via}
  fragmentation theory.
\newblock {\em Ann. Probab.}, \textbf{39(6)}: 2224-2270 (2011).

\bibitem{DFHN99}
L.~Devroye, P.~Flajolet, F.~Hurtado, M.~Noy, W.~Steiger.
\newblock Properties of random triangulations and trees.
\newblock {\em Discrete Comput. Geom.}, \textbf{22(1)} : 105-117 (1999).

\bibitem{Duquesne}
T. Duquesne, A limit theorem for the contour process of conditioned
Galton-Watson trees, \emph{Ann. Probab.} \textbf{31}: 996-1027
(2003).

\bibitem{DLG02}
T.~Duquesne, J.\,F.~Le Gall.
\newblock \emph{Random trees, {L}\'evy processes and spatial branching processes}
\newblock  Ast\'erisque, 281 (2002)
\bibitem{FN99}
P.~Flajolet and M.~Noy.
\newblock Analytic combinatorics of non-crossing configurations.
\newblock {\em Discrete Math.}, \textbf{204(1-3)} : 203-229 (1999).

\bibitem{FS09}
P.~Flajolet and R.~Sedgewick.
\newblock {\em Analytic combinatorics}.
\newblock Cambridge University Press, Cambridge (2009).

\bibitem{GW00}
Z.~Gao and N.~C. Wormald.
\newblock The distribution of the maximum vertex degree in random planar maps.
\newblock {\em J. Combin. Theory Ser. A}, \textbf{89(2)} : 201-230 (2000).

\bibitem{Kes86}
H.~Kesten.
\newblock Subdiffusive behavior of random walk on a random cluster.
\newblock {\em Ann. Inst. H. Poincar\'e Probab. Statist.}, \textbf{22(4)} : 425--487
  (1986).
  
  \bibitem{K:leaves}
I. Kortchemski, Invariance principles for conditioned Galton-Watson trees, \texttt {arXiv:1110.2163} (2011).

\bibitem{K:lam}
I. Kortchemski, Random stable laminations of the disk, \texttt{arxiv:1106.0271} (2011).

\bibitem{LG06}
J.-F. Le~Gall.
\newblock Random real trees.
\newblock {\em Ann. Fac. Sci. Toulouse Math. (6)}, \textbf{15(1)} : 35-62 (2006).


\bibitem{RandomTrees}
J.-F. Le Gall, Random trees and applications, \emph{Probability
Surveys} \textbf{2}, 245-311 (2005).

\bibitem{LGP08}
J.-F. Le~Gall and F.~Paulin.
\newblock Scaling limits of bipartite planar maps are homeomorphic to the
  2-sphere.
\newblock {\em Geom. Funct. Anal.}, \textbf{18(3)} : 893-918 (2008).

\bibitem{LP10}
R. Lyons, Y. Peres, 
\newblock {\em Probability on Trees and Networks}.
\newblock Cambridge University Press.
\newblock In preparation. Current
  version available at 
  {\tt http://mypage.iu.edu/\string~rdlyons/} (2011).
  
  \bibitem{MM06}
  J.-F. Marckert, A. Mokkadem, Limit of normalized quadrangulations: The Brownian map, \emph{Ann. Probab}, \textbf{34(6)}: 2144-2202 (2006).

\bibitem{MP02}
J.-F. Marckert and A.~Panholzer.
\newblock Noncrossing trees are almost conditioned {G}alton-{W}atson trees.
\newblock {\em Random Structures Algorithms}, \textbf{20(1)}: 115-125 (2002).

\bibitem{Minami}
N. Minami, On the number of vertices with a given degree in a
Galton-Watson tree, \emph{Adv. Appl. Probab} \textbf{37}: 229-264
(2005).

\bibitem{Pitman}
J. Pitman, \emph{Combinatorial Stochastic Processes}, Lecture Notes
Math. \textbf{1875}. Springer-Verlag, Berlin (2006).

\bibitem{PitmanRizzolo}	
J. Pitman, D. Rizzolo, Schröder's problems and scaling limits of random trees, \texttt{arxiv:1107.1760} (2011).

\bibitem{Rizzolo}	
D. Rizzolo, Scaling limits of Markov branching trees and Galton-Watson trees conditioned on the number of vertices with out-degree in a given set, \texttt{arxiv:1105.2528} (2011).


\bibitem{STT88}
D.~D. Sleator, R.~E. Tarjan, and W.~P. Thurston.
\newblock Rotation distance, triangulations, and hyperbolic geometry.
\newblock {\em J. Amer. Math. Soc.}, \textbf{1(3)}: 647-681 (1988).

\bibitem{Spitzer}

F. Spitzer, \emph{Principles of Random Walk}, Second Edition, New
York: Springer-Verlag (1976).


\end{thebibliography}
\end{document}